\newcommand{\tl}{\mathtt{1}} 
\newcommand{\tO}{\mathtt{0}}
\newcommand{\occ}[2]{N^{#1}(#2)}
\newcommand{\occempty}[1]{N^{#1}}
\newcommand{\fl}[1]{\left\lfloor #1 \right\rfloor}
\newcommand{\N}{\mathbb{N}}
\newcommand{\Z}{\mathbb{Z}}
\newcommand{\R}{\mathbb{R}}
\newcommand{\C}{\mathbb{C}}
\newcommand{\dens}{\operatorname{dens}}
\DeclareMathOperator{\e}{e}     
\newenvironment{customprop}[1]
  {\customproposition}
  {\endcustomproposition}
\newtheorem{theorem}{Theorem}[section]
\newtheorem{proposition}[theorem]{Proposition}
\newtheorem{corollary}[theorem]{Corollary}
\newtheorem{lemma}[theorem]{Lemma}
\newtheorem{conjecture}[theorem]{Conjecture}
\theoremstyle{definition}
\newtheorem*{remark}{Remark}
\numberwithin{equation}{section}
\title{On the behavior of binary block-counting functions under addition}
\author{Bartosz Sobolewski\\Jagiellonian University in Krak\'{o}w, Poland\\
Montanuniversit{\"a}t Leoben, Austria \\
\url{bartosz.sobolewski@uj.edu.pl}
\date{}
}
\begin{document}
\maketitle
\begin{abstract}
Let $\mathsf{s}(n)$ denote the sum of binary digits of an integer $n \geq 0$. In the recent years there has been interest in the behavior of the differences $\mathsf{s}(n+t)-\mathsf{s}(n)$, where $t \geq 0$ is an integer. In particular, Spiegelhofer and Wallner showed that for $t$ whose binary expansion contains sufficiently many blocks of $\tl$s the inequality $\mathsf{s}(n+t) -\mathsf{s}(n) \geq 0$ holds for $n$ belonging to a set of asymptotic density $>1/2$, partially answering a question by Cusick. Furthermore, for such $t$ the values $\mathsf{s}(n+t) - \mathsf{s}(n)$ are approximately normally distributed.

In this paper we consider a natural generalization to the family of block-counting functions $\occempty{w}$, giving the number of occurrences of a block of binary digits $w$ in the binary expansion. Our main result show that for any $w$ of length at least $2$ the distribution of the differences $\occ{w}{n+t} - \occ{w}{n}$ is close to a  Gaussian when $t$ contains many blocks of $\tl$s in its binary expansion. This extends an earlier result by the author and Spiegelhofer for $w=\tl \tl$.
\end{abstract}

\renewcommand{\thefootnote}{\fnsymbol{footnote}}
\footnotetext{\emph{2020 Mathematics Subject Classification}: 11A63, 11BO5, 05A16. }
\footnotetext{\emph{Key words and phrases}:  block-counting function, sum of digits, Cusick's conjecture, asymptotic density}

\section{Introduction and main result} \label{sec:intro}
The properties of binary expansions of integers is a commonly studied topic in number theory and theoretical computer science. Although arithmetic operations on such expansions are easily performed, their large-scale behavior is far from being fully understood due to propagation of carries. A problem which showcases this well is a deceptively simple question concerning the binary sum of digits $\mathsf{s}$, asked by Cusick in 2011 (and later ``promoted'' to a conjecture). Here and in the sequel we use the convention $\N = \{0,1,\ldots\}$. 

\begin{conjecture}[Cusick]
For all $t \in\N$ the natural density 
$$   c_t  = \dens \{n \in \N: \mathsf{s}(n+t) \geq \mathsf{s}(n)   \}$$
satisfies $c_t > 1/2$.
\end{conjecture}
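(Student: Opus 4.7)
The plan is to reduce the inequality to a statement about carries and then study the resulting distribution via the probabilistic/Fourier-analytic framework of Spiegelhofer and Wallner mentioned in the introduction. The starting point is the identity
\[
\mathsf{s}(n+t) = \mathsf{s}(n) + \mathsf{s}(t) - 2\,c(n,t),
\]
where $c(n,t)$ counts the carries generated in the binary addition of $n$ and $t$. Hence $c_t$ is precisely the density of $n$ for which $c(n,t) \leq \mathsf{s}(t)/2$. For fixed $t$ of binary length $L$ and $n$ uniform on $[0,2^N)$ with $N \gg L$, the bits of $n$ are i.i.d.\ fair, and the carry sequence becomes a two-state Markov chain driven by the bits of $t$. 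This realizes $c(n,t)$ as an explicit sum of correlated Bernoulli indicators, and the densities $f_t(k) := \dens\{n \in \N : c(n,t) = k\}$ can be read off a transfer-matrix product.

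The next step is to locate the distribution $f_t$ relative to the threshold $\mathsf{s}(t)/2$. I would compute the mean $\mu_t$ and variance $\sigma_t^2$ from the transfer matrices and then apply a local central limit theorem to approximate $f_t$ by a discrete Gaussian centered at $\mu_t$ with spread $\sigma_t$. The key quantitative inputs are a lower bound on $\sigma_t^2$ in terms of the block structure of $t$ (growing with the number of $\tl$-blocks, exactly as in Spiegelhofer--Wallner), an estimate of the discrepancy $\mathsf{s}(t)/2 - \mu_t$, and control of the Gaussian error. Combining these should yield $c_t > 1/2$ whenever $\sigma_t$ is large and $\mu_t$ lies at or just below $\mathsf{s}(t)/2$, which recovers the partial result cited in the excerpt.

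The main obstacle is uniformity across all $t$. For $t$ whose binary expansion is structurally sparse --- few or short $\tl$-blocks, e.g.\ $t = 2^a$ or $t = 2^a + 2^b$ --- the variance $\sigma_t^2$ is bounded and the Gaussian approximation breaks down, so one must instead compute $f_t$ nearly exactly and verify the bias by hand. A uniform proof would presumably require either an explicit combinatorial construction leaving a positive-density residue on the side where $c(n,t) < \mathsf{s}(t)/2$, or a refined analytic method that handles all $t$ simultaneously; bridging this gap is precisely what keeps Cusick's conjecture open today. I would therefore approach it by first establishing the analytic side in the broadest possible class of $t$, then attempting an induction on the block structure that reduces the sparse case to tractable base cases amenable to direct combinatorial treatment.
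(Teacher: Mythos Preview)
The paper does not prove this statement: Cusick's conjecture is presented there as an \emph{open problem}, and the paper explicitly says ``Although Cusick's conjecture remains open so far\ldots''. There is no ``paper's own proof'' to compare against; the conjecture is only discussed as motivation and background.

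Your proposal is not a proof either, and you say as much in the final paragraph. What you have written is an accurate summary of the Spiegelhofer--Wallner approach and of why it falls short of the full conjecture: the Gaussian approximation only controls $c_t$ when the number $N$ of maximal $\tl$-blocks in $t$ is large, and for bounded $N$ the method gives nothing. So the ``gap'' in your argument is the same gap that keeps the conjecture open --- the sparse-block case --- and you correctly identify it. But a proposal that ends with ``this is precisely what keeps the conjecture open'' is a literature survey, not a proof attempt. If the assignment was to prove the boxed statement, you should flag at the outset that it is a conjecture with no known proof, rather than presenting a plan whose terminal step is the unresolved part of the problem.
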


 It can be verified that $c_t$ is well-defined, as a consequence of results B\'{e}sineau \cite[Lemme 1]{Besineau1972} (see also \cite[Lemma 3]{DrmotaKauersSpiegelhofer2016}).  Apart from being interesting in itself, the conjecture and the overall behavior of the differences $s(n+t)-s(n)$ are closely connected with other important problems. In particular, Cusick's conjecture is related to the Tu-Deng conjecture \cite{TuDeng2011} in cryptography. The latter conjecture, in fact, implies the former and holds almost surely, as proved in \cite{SpiegelhoferWallner2019}. Furthermore, the $2$-adic valuation $\nu_2$ of binomial coefficients in column $t$ of Pascal's triangle satisfies
$$ \mathsf{s}(n+t) - \mathsf{s}(n) = \mathsf{s}(t) - \nu_2\left( \binom{n+t}{t} \right),  $$
which is a quick corollary of Legendre's formula \cite{Legendre1830} for $\nu_2(m!)$. Therefore, the study of Cusick's conjecture may yield results on $2$-divisibility of binomial coefficients, and vice versa. For more details and other problems related to the conjecture see \cite[Section 3]{DrmotaKauersSpiegelhofer2016}.

Although Cusick's conjecture remains open so far, in the recent years there has been significant progress towards proving its validity.
It was verified numerically for $t<2^{30}$ by Drmota, Kauers and Spiegelhofer \cite{DrmotaKauersSpiegelhofer2016}.  In the same paper they showed that for any $\varepsilon > 0$  the inequality $1/2 < c_t < 1/2+\varepsilon$ holds for almost all $t$ in the sense of natural density. A central-limit type result on the distribution of  $\mathsf{s}(n+t)-\mathsf{s}(n)$ was established by Emme and Hubert \cite{EmmeHubert2019}, where $t$ is randomly chosen from $\{0,1,\ldots, 2^k-1\}$ and $k \to \infty$ (see also \cite{EmmeHubert2019,EmmePrikhodko2017}). Spiegelhofer \cite{Spiegelhofer2022} obtained a lower bound $c_t > 1/2-\varepsilon$ for all $t$ whose binary expansion of $t$ has sufficiently many maximal blocks of $1$s (improving his result from \cite{Spiegelhofer2019}). All these results were substantially improved in a recent paper of Spiegelhofer and Wallner \cite{SpiegelhoferWallner2023}. They showed that for each $t$ whose binary expansion contains $N \geq N_0$ maximal blocks of $\tl$s (where $N_0$ can be made explicit), the following statements are true: 
\begin{enumerate}
    \item The inequality $c_t > 1/2$ holds;
    \item For each $j \in \Z$ we have an asymptotic expression
    \begin{equation} \label{eq:s_asymptotic}
        \dens \{n \in \N: \mathsf{s}(n+t) - \mathsf{s}(n)  = j  \} =\frac{1}{\sqrt{2\pi \kappa(t)}} \exp \left(- \frac{j^2}{2 \kappa(t)} \right) + O(N^{-1} (\log N)^4),
    \end{equation}
     as $t \to \infty$, where $\kappa(t)$ denotes the variance of the corresponding distribution and satisfies a simple recursion.
\end{enumerate}
The first result is particularly important, as it reduces the task to proving that $c_t > 1/2$ for $t$ such that $N < N_0$. 
However, as Cusick himself stated in private communication, the remaining case $N < N_0$ is the hard one. Indeed, for small $N$ the approximation by a Gaussian distribution is apparently not precise enough, and thus other methods need to be developed.









While the full conjecture seems out of reach for the moment, we may as well ask whether the results obtained so far hold for a more general class of functions describing radix representations of integers. There are a few natural directions to consider, one of them being an extension to an arbitrary base $b \geq 2$. For the base-$b$ sum of digits $\mathsf{s}_b$, Hosten, Janvresse and de la Rue \cite{HostenJanvresseRue2024} proved that $\mathsf{s}_b(n+t)-\mathsf{s}_b(n)$ again satisfies a central-limit type result. Moreover, they estimated the error of approximation of the corresponding cumulative distribution function (after scaling) by a Gaussian. Interestingly, in the binary case neither this result, nor the expression \eqref{eq:s_asymptotic} seems to imply the other.

Another possible generalization is concerned with other patterns in integer expansions. In this direction, the author and Spiegelhofer \cite{SobolewskiSpiegelhofer2023} proved an analogue of the asymptotic formula \eqref{eq:s_asymptotic}, where the binary sum of digits is replaced with the function $\mathsf{r}$, counting the occurrences of the block $\tl \tl$ in the binary expansion. More precisely, they obtained for each $j \in \Z$ the asymptotic expression (formulated in an equivalent way):
\begin{equation} \label{eq:r_asymptotic}
    \dens \{n \in \N: \mathsf{r}(n+t) - \mathsf{r}(n)  = j  \} =\frac{1}{\sqrt{2\pi v_t}} \exp \left(- \frac{j^2}{2 v_t} \right) + O(N^{-1} (\log N)^2),
\end{equation}  
where $N$ has the same meaning as before, and $v_t$ denotes the variance of the corresponding distribution. Note that here the error term is  better by factor of $(\log N)^2$ in comparison with \eqref{eq:s_asymptotic}, and the same improvement should be possible there.


In the present paper we continue this particular line of research and extend the formulas  \eqref{eq:s_asymptotic} and \eqref{eq:r_asymptotic} to functions counting the occurrences of any string $w$ of binary digits. More precisely, we let $\occ{w}{n}$ denote the number of occurrences of $w$ in the binary expansion of $n \in \N$. In particular, we have $\mathsf{s}(n) = \occ{\tl}{n}$ and $\mathsf{r}(n) = \occ{\tl \tl}{n}$. In the case when $w$ begins with a $\tO$ and contains a $\tl$, when computing $\occ{w}{n}$ we will use a standard convention (as in   \cite{AlloucheShallit2003}) that the binary expansion of $n$ is preceded by a block of leading zeros of suitable length  (see \eqref{eq:convention} below for a precise definition).

For each $t \in \N$ we define the function $d^w_t \colon \N \to \Z$, given  by
\begin{equation} \label{eq:d_def}
    d^w_t(n) = \occ{w}{n+t} - \occ{w}{n},
\end{equation}  
characterizing the change of $\occ{w}{n}$ under addition. 

\begin{remark}
When $w = \tO^\ell$, that is, $w$ is a string of $\ell$ zeros, the behavior $d^{\tO^\ell}_t$ is slightly irregular when the binary expansions of $n, n+t$ have different lengths.  For the sake of convenience we will apply a ``correction'' to the definition of $d^{\tO^\ell}_t$ in this case (see \eqref{eq:d_modification} below). When $t$ is fixed, the set of such $n \in \N$ has density $0$, hence the modification does not affect our results.
\end{remark}

As we will show in Proposition \ref{prop:arith_prog} below, for each $t \in \N$ and $k \in \Z$ the set
$$  \mathcal{D}^w_t(k) = \{n \in \N: d^w_t(n) = k\} $$
can be expressed as a (possibly infinite or empty) union of arithmetic progressions of the form $\{2^a m +b: m \in \N\}$, where $a \in \N$ and $b \in \{0,1,\ldots, 2^a-1 \}$. As a result, there exist natural densities
$$ \delta^w_t(k) = \dens \mathcal{D}^w_t(k),  $$
 which sum up to $1$ for each fixed $w, t$. Throughout the paper, we can thus identify $\delta^w_t$ with a probability distribution on $\Z$. 

We may exclude the case $w=\tO$ from our considerations, and assume that $w$ has length at least $2$. Indeed, when the binary expansions of $n$ and $n+t$ have the same length, we get $d_t^\tO (n)= - d_t^\tl (n)$. But for fixed $t$ the set of such $n$ has density $1$, and thus $\delta_t^\tO (j)= \delta_t^\tl (j)$ for all $j\in \Z$. Hence, the asymptotic formula \eqref{eq:s_asymptotic} holds when the left-hand side is replaced with $\delta_t^\tO(j)$.


We now state our main result, where $v^w_t$ denotes the variance of the distribution $\delta^w_t$, equal to its second moment (as we will see, the mean is $0$ for each $t$):
$$v^w_t = \sum_{k \in \Z} k^2 \delta^w_t(k).$$
We note that $v^w_t$ can be computed using a set of recurrence relations, given in Corollary \ref{cor:v_formula}.

\begin{theorem} \label{thm:main}
Let $w$ be a string of binary digits of length at least $2$. For $t \in \N$ let $N$ denote the number of maximal blocks of $\tl$s in its binary expansion. Then for each $k \in \Z$ we have
\begin{equation}\label{eq:main}
 \delta^w_t(k)  = \frac{1}{\sqrt{2\pi v^w_t}}\exp\left(-\frac{k^2}{2v^w_t}\right) + O\left(\frac{(\log N)^2}{N}\right),
\end{equation}
as $N \to \infty$, where the implied constant depends only on $w$ and can be made explicit.
\end{theorem}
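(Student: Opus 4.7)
The plan is to adopt the Fourier-analytic strategy of \cite{SpiegelhoferWallner2023,SobolewskiSpiegelhofer2023}. Encode the distribution $\delta^w_t$ through its characteristic function
$$ \varphi^w_t(\vartheta) = \sum_{k \in \Z} \delta^w_t(k) \exp(2\pi i k \vartheta), \qquad \vartheta \in [-1/2, 1/2], $$
so that Fourier inversion recovers each $\delta^w_t(k)$. The formula \eqref{eq:main} will follow by comparing $\varphi^w_t(\vartheta)$ with the Gaussian characteristic function $\exp(-2\pi^2 v^w_t \vartheta^2)$ inside the inversion integral.

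The first step is to produce a transfer-matrix expression for $\varphi^w_t$. Reading the binary digits of $n$ from least to most significant position, a finite automaton whose state records the carry bit of the addition $n+t$ together with the last $|w|-1$ digits each of $n$ and $n+t$ suffices to compute $d^w_t(n)$ incrementally: these bits are exactly what is needed to detect each new occurrence of $w$ as the next digit is appended. Weighting each transition by $\exp(2\pi i \vartheta \cdot (\text{contribution to } d^w_t))$ and summing over $n \in \N$ rewrites $\varphi^w_t(\vartheta)$ as a matrix product
$$ \varphi^w_t(\vartheta) = u^\top \Biggl( \prod_{j} A^{(\varepsilon_j)}_w(\vartheta) \Biggr) v, $$
where the factors depend only on the digits $\varepsilon_j$ of $t$ and on $\vartheta$. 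Grouping factors according to the maximal $\tl$-blocks of $t$ and diagonalising each resulting block matrix extracts a dominant eigenvalue $\lambda_w(\vartheta)$ for which $|\varphi^w_t(\vartheta)| \ll |\lambda_w(\vartheta)|^N$, with implicit constant depending only on $w$.

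The second step is a three-range analysis of $\varphi^w_t$. In the central regime $|\vartheta| \le c\sqrt{(\log N)/N}$, a Taylor expansion of $\log \varphi^w_t$ at $0$ yields $\varphi^w_t(\vartheta) = \exp(-2\pi^2 v^w_t \vartheta^2)(1 + O(N|\vartheta|^3))$, with the second-order term matching the variance formula of Corollary \ref{cor:v_formula}. In the intermediate regime, a local bound $|\lambda_w(\vartheta)| \le 1 - c'\vartheta^2$ gives $|\varphi^w_t(\vartheta)| \le \exp(-c' N \vartheta^2)$. In the tail $|\vartheta|$ bounded away from $0$, a uniform spectral gap $|\lambda_w(\vartheta)| \le \rho < 1$ yields the exponential decay $|\varphi^w_t(\vartheta)| \le C\rho^N$. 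Substituting these bounds into the inversion integral, extending the central Gaussian piece to all of $\R$ at negligible cost, and estimating the remaining two ranges by $O((\log N)^2/N)$, delivers \eqref{eq:main}.

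The main obstacle is proving the spectral gap and local quadratic decay of $\lambda_w(\vartheta)$ for \emph{arbitrary} $w$ of length $\ge 2$, rather than just $w = \tl$ or $w = \tl\tl$ as in the earlier works. The transfer matrices grow with $|w|$, and their structure is sensitive to the internal periodicity of $w$ (compare, for instance, $w = \tl\tO\tl$ with $w = \tl\tl\tO$), so one must verify that every complete maximal $\tl$-block of $t$ forces $d^w_t$ to fluctuate, giving $|\lambda_w(\vartheta)| < 1$ uniformly on $[-1/2,1/2] \setminus \{0\}$. The special case $w = \tO^\ell$ singled out in the Remark is precisely where the naive argument fails and must be handled via the correction \eqref{eq:d_modification}.
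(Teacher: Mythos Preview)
Your high-level Fourier-analytic framework is the same one the paper uses: recover $\delta^w_t(k)$ by inverting the characteristic function, approximate by a Gaussian near $0$, and control the rest by a pointwise upper bound of the form $\exp(-cN\theta^2)$. The three analytic ingredients you list correspond exactly to the paper's Propositions~A,~B,~C. Your third ``tail'' regime with a uniform spectral gap $|\lambda_w|\le\rho<1$ is not needed once the quadratic bound $\exp(-c'N\vartheta^2)$ is available on the whole interval, and indeed the paper uses only two regimes.

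The gap in your plan lies in the line ``diagonalising each resulting block matrix extracts a dominant eigenvalue $\lambda_w(\vartheta)$ for which $|\varphi^w_t(\vartheta)|\ll|\lambda_w(\vartheta)|^N$''. The matrices associated to distinct maximal blocks of $t$ do not commute and depend on the block length and on the $\ell-1$ digits flanking each block, so there is no common diagonalisation and no single eigenvalue governing the product. Spectral radius is not submultiplicative, so a per-factor eigenvalue bound does not yield a bound on the product. What is actually needed---and what the paper proves---is a \emph{row-sum norm} bound: for each suitable substring of $(t)_2$ containing one occurrence of $\tO\tl$, the corresponding matrix product has $\|\cdot\|_\infty\le 1-c\theta^2$, and then submultiplicativity of the operator norm gives Proposition~C. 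Establishing this uniformly in the substring is the main technical content of Section~7 and relies on Lemma~\ref{lem:snake_matrix_product} to show that after $\ell-1$ factors all entries of the product at $\theta=0$ are bounded below by $1/2^{\ell+1}$, which is what makes the trigonometric cancellation of Lemma~\ref{lem:norm_bound} bite.

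Similarly, your central-regime step ``Taylor expansion of $\log\varphi^w_t$'' hides the genuinely hard part. The paper does not expand the product and control third-order terms directly; instead it introduces conditional distributions $\delta_{t,j}$ (conditioning on $n\bmod 2^{\ell-1}$), proves that their second moments $v_{t,j}$ differ from $v_t$ by at most a constant independent of $t$ (Proposition~\ref{prop:v_close}), and uses this to propagate the Gaussian approximation through the recursion for $\Gamma_t$. Without such a device the error in the Taylor expansion would accumulate multiplicatively over $N$ factors and swamp the main term. Your automaton tracking the last $|w|-1$ digits of both $n$ and $n+t$ would work in principle but gives matrices of size $2^{2\ell-1}$; the paper's insight is that tracking only the last $\ell-1$ digits of $n$ suffices, at the cost of the two-term recursion $\Gamma_{2t+1}=B_{2t+1}\Gamma_t+C_{2t+1}\Gamma_{t+1}$, which is then handled by passing to the doubled vectors $[\Gamma_t;\Gamma_{t+1}]$.
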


\begin{remark}
As is the case with \eqref{eq:s_asymptotic} and \eqref{eq:r_asymptotic}, we can 
argue that over each interval $[-k_0,k_0]$ the main term dominates the error term asymptotically. We use Proposition \ref{prop:variance_ineq} below, which says that $ mN \leq v^w_t \leq M N$ for certain explicit constants $M > m > 0$. Choose a constant $C \in (0, \sqrt{m})$. If $N$ is large enough that $C \sqrt{N \log N} \geq k_0$, then for any $k \in [-k_0,k_0]$ we have
\begin{equation} \label{eq:small_error}
    \frac{1}{\sqrt{2\pi v^w_t}}\exp\left(-\frac{k^2}{2v^w_t}\right) \geq \frac{1}{\sqrt{2\pi M N}}\exp\left(-\frac{C^2 N \log N}{2mN}\right) \geq  \frac{1}{\sqrt{2\pi M}} N^{-(m+C^2)/2m},
\end{equation}   
where the exponent is strictly greater than $-1$ due to the choice of $C$. Hence, the last expression is asymptotically of higher order than $N^{-1} (\log N)^2$.
\end{remark}

The overall idea used to prove this result is similar as in \cite{SobolewskiSpiegelhofer2023} and relies on analyzing the behavior of the moments and characteristic functions $\gamma^w_t$ of the distributions $\delta^w_t$. The key technical ingredients are the following.
\begin{enumerate}
    \item[(A)] Linear bounds on the variance $v^w_t$ in terms of $N$.
    \item[(B)] Quality of approximation of $\gamma^w_t$ around $0$ by a Gaussian characteristic function.
    \item[(C)] An exponential upper bound on the tails of $\gamma^w_t$.
    \end{enumerate}
The main contribution of the present paper lies in showing that these properties still hold for arbitrary $w$. This presents much higher technical difficulties than in the previous results, as most of the time we cannot rely on formulas with concrete numerical coefficients, and instead need to resort to various approximations. Once properties (A)--(C) are established, the rest of the proof is essentially the same.

We now briefly outline the contents of the remainder of the paper. Section \ref{sec:notation} describes the notation and terminology. In Section \ref{sec:idea} we state precisely the three main technical ingredients (A)--(C) in our paper and show that they imply Theorem \ref{thm:main}. Later sections are devoted to proving these results. Basic properties and recurrences relations connecting the distributions $\delta^w_t$ for various $t$ are established in Section \ref{sec:basic}. To state these relations, we introduce certain conditional probability distributions which play an essential role throughout the whole proof. Along the way, we show that the densities $\delta^w_t(k)$ indeed exist. In Section \ref{sec:first} we study the means of these conditional distributions. Section \ref{sec:second} focuses their second moments and culminates in the proof of (A). Sections \ref{sec:normal} and \ref{sec:upper} contain the proof of (B) and (C), respectively. 



\section{Notation and terminology} \label{sec:notation}

We start with some notational and naming conventions related to binary words. Most of these are rather standard and follow \cite{AlloucheShallit2003}. When referring to binary digits, we always use typewriter font $\tO, \tl$.  
We let $\{\tO,\tl\}^*$ denote the set of all finite words (strings, blocks) on the alphabet $\{\tO,\tl\}$, including the empty word $\epsilon$. The length of $w \in \{\tO,\tl\}^*$ is denoted by $|w|$. The set of words of length $k \in \N$ is denoted by $\{\tO,\tl\}^k$. If $w=w_1w_2\cdots w_k$, where $w_j \in \{\tO,\tl\}$, then the reversal of $w$ is $w^R=w_k w_{k-1} \cdots w_1$. The bitwise negation of $w$ is $\overline{w} = \overline{w_1} \cdots \overline{w_k}$, where $\overline{\tO}=\tl, \overline{\tl}=\tO$.
The notation $\varepsilon^k$, where $\varepsilon \in \{\tO,\tl\}^*$, is a shorthand for $\varepsilon$ repeated $k$ times. 
For a word $w = w_m w_{m-1} \cdots w_1 w_0$, where $w_j \in \{\tO,\tl\}$, we let $[w]_2$ denote the integer represented by $w$ in base $2$, where the leftmost (leading) digit is the most significant, namely $[w]_2 = \sum_{j=0}^m 2^j w_j$. We note that leading zeros are allowed and will often appear when we require $w$ to have specified length. If $n \in \N$, then we refer to any $w$ satisfying $[w]_2=n$ as a binary expansion of $n$.  
Conversely, if $n \in \N$, then $(n)_2$ denotes the \emph{canonical} binary expansion $n$, namely the unique string $w \in \{\tO,\tl\}^*$ without leading zeros such that $[w]_2=n$ (in particular, $(0)_2 = \epsilon$). For $v,w \in \{\tO,\tl\}^*$ we say that $w$ is a prefix of $v$ if $v=wx$ for some $x \in \{\tO,\tl\}^*$. Similarly, $w$ is a suffix of $v$ if $v=xw$. More generally, $w$ is a subword (factor) of $v$ if $v=xwy$ for some $x,y\in \{\tO,\tl\}^*$. The number of occurrences of $w$ in $v$ as a subword is denoted by $|v|_w$, where we allow distinct occurrences to overlap. The function $\occempty{w}$ can be thus defined by 
\begin{equation} \label{eq:convention}
    \occ{w}{n} = | \tO^{|w|-1} (n)_2|_w,
\end{equation}
following the convention of preceding the canonical binary expansion with leading zeros. Note that this has no effect on the value of $\occ{w}{n}$ when $w$ starts with a $\tl$ or $w$ is a block of zeros. On the other hand, when $w$ starts with a $\tO$ (and contains a $1$) it sometimes causes an additional occurrence of $w$. For example, we have $(9)_2=\tl\tO\tO\tl$ but $\occ{\tO\tO\tl}{9} = |\tO\tO\tl\tO\tO\tl|_{\tO\tO\tl} = 2$. In particular, the number of maximal blocks of $\tl$s in the binary expansion of $t\in\N$, which often appears in our results, can be written as $\occ{\tO\tl}{t}$.


We turn to the notation concerning matrices and vectors. Throughout the whole paper, their rows and columns will be indexed starting from $0$. We let $\mathbf{1}$ denote a column vector whose all entries are $1$. Similarly, $\mathbf{0}$ denotes a matrix of with all entries $0$. Their sizes will always be clear from the context.
For a matrix $A=[a_{jk}]_{0 \leq j \leq m, 0 \leq k \leq n}$ with complex entries we let $\|A\|_{\infty}$ denote its row-sum norm (infinity norm):
$$\|A\|_{\infty} = \max_{0 \leq j \leq m} \sum_{k=0}^n |a_{jk}|.$$
Finally, $i$ always denotes the imaginary unit and we write $\e(\theta)$ as a shorthand for $\exp(i \vartheta)$.

\section{Idea of the proof} \label{sec:idea}

Here and in the sequel we consider $w$ to be a fixed string of length $\ell = |w| \geq 2$, consisting of digits $\tO, \tl$. Therefore, we often omit $w$ in the superscript when it does not cause ambiguity, and simply write $N = N^w, d_t = d^w_t, \delta_t = \delta^w_t$, etc. 

The general idea of the proof of Theorem \ref{thm:main} is the same as in \cite{SpiegelhoferWallner2023, SobolewskiSpiegelhofer2023}. Nevertheless, we describe it in full for the sake of completeness. To begin, we state the three main technical ingredients from which Theorem \ref{thm:main} will follow. Firstly, we give linear bounds on the variance $v_t$. We note that the proof of Theorem \ref{thm:main} will only use the lower bound, however the upper bound was is needed in \eqref{eq:small_error} to prove that the error term in its statement is small. 

\begin{customprop}{A} \label{prop:variance_ineq}
    There exist constants $M > m > 0$ such that for all $t \in \N$ we have
    $$ m \occ{\tO\tl}{t} \leq  v^w_t \leq M \occ{\tO\tl}{t}, $$
    and we can choose
    $$ m = \frac{1}{4^{\ell-1}}, \qquad M =  \frac{3(\ell+2)}{2^{\ell-2}}.$$
\end{customprop}

Secondly, let $\gamma_t$ be the characteristic function of the distribution $\delta_t$:
$$  \gamma^w_t(\theta) = \sum_{k \in \Z} \delta_t(k) \e(k\theta). $$
We consider its Gaussian approximation $\hat{\gamma}^w_t$, defined by
$$  \hat{\gamma}^w_t(\theta) = \exp\left(-\frac{v_t}{2} \theta^2 \right). $$
The following proposition gives an estimate of the error of such approximation over an interval.
\begin{customprop}{B}    
 \label{prop:normal_approx}
    For any $\theta_0 > 0$ there exists a constant $K(\theta_0) > 0$ such that for all $t \in \N$ and $\theta \in [-\theta_0, \theta_0]$ we have    
    $$  |\gamma^w_t(\theta)-\hat{\gamma}^w_t(\theta)| \leq  K(\theta_0) \occ{\tO\tl}{t} |\theta|^3, $$
and we can choose
$$K(\theta_0) =  4 \ell \left(57 + \frac{19^2}{2}\theta_0  + \frac{(3+19\theta_0)^3}{6}\exp(3 \theta_0+19\theta_0^2 )\right).$$
    
    
\end{customprop}

Finally, we have an upper bound on $|\gamma_t(\theta)|$, given in terms of an exponential function.

\begin{customprop}{C}    
\label{prop:char_fun_bound}
    There exists a constant $L > 0$ such that for all $t \in \N$ satisfying $\occ{\tO\tl}{t} \geq \ell+3$ we have
    $$ |\gamma^w_t(\theta)| \leq  \exp(-L \occ{\tO\tl}{t} \theta^2), $$
    and we can choose
    $$ L = \frac{\pi^2}{2^{\ell+2}(\ell+3)}. $$
\end{customprop} 

As already mentioned, proving these results requires a substantial refinement of the approach used in \cite{SpiegelhoferWallner2023, SobolewskiSpiegelhofer2023}. In particular, we have made an effort to obtain possibly simple statements, while ensuring that the constants are explicit functions of $\ell$. With a more detailed analysis it should be possible to improve them even further.

We now show how these ingredients imply Theorem \ref{thm:main}. To simplify the notation, until the end of this section we will write $N = \occ{\tO\tl}{t}$. Moreover, we use symbolic constants, as in the above propositions, and their numerical values can be used to retrieve the implied constant in Theorem \ref{thm:main} as a function of $\ell = |w|$.

The value $\delta_t(k)$ can be extracted via the formula
\begin{equation} \label{eq:delta_int}
     \delta_t(k) = \frac{1}{2\pi} \int_{-\pi}^{\pi} \gamma_t(\theta) \e(-k \theta) \: d\theta.
\end{equation} 
We split this integral at $\pm \theta_0$, where 
$$  \theta_0 = C\sqrt{\frac{\log N}{N}},  $$
and $C > 0$  is a constant satisfying $C^2 \geq \max\{1/(2L), 1/m \}.$ Hence, we only consider $N$ sufficiently large, such that $\theta_0 \leq \pi$.

The integral over $[-\theta_0, \theta_0]$ is approximated by replacing $\gamma_t$ with $\hat{\gamma}_t$, and then extended to $\R$, which gives
$$  \frac{1}{2\pi}\int_{-\theta_0}^{\theta_0} \hat{\gamma}_t(\theta) \e(-k \theta) \: d\theta =   \frac{1}{2\pi}\int_{-\infty}^{\infty} \hat{\gamma}_t(\theta) \e(-k \theta) \: d\theta -  \frac{1}{\pi}\int_{\theta_0}^{\infty} \hat{\gamma}_t(\theta) \e(-k \theta) \: d\theta.$$
By the well-known inversion formula, the first integral equals the probability density function of normal distribution $\mathcal{N}(0,v_t)$, evaluated at $k$: 
\begin{equation} \label{eq:main_term}
    \frac{1}{2\pi}\int_{-\infty}^{\infty} \hat{\gamma}_t(\theta) \e(-k \theta) \: d\theta = \frac{1}{\sqrt{2 \pi v_t}} \exp\left( -\frac{k^2}{2v_t}\right).
\end{equation}
This is the main term in Theorem \ref{thm:main}. 

At the same time, for any $c>0$, we have the standard estimate
\begin{equation} \label{eq:tail_estimate}
    \int_{\theta_0}^{\infty} \exp(-c \theta^2) \: d\theta \leq \int_{\theta_0}^{\infty} \frac{\theta}{\theta_0} \exp(-c \theta^2) \: d\theta = \frac{1}{2c \theta_0 } \exp(-c \theta_0^2).
\end{equation} 
As a consequence, we get 
\begin{align*}
    \left|\int_{\theta_0}^{\infty} \hat{\gamma}_t(\theta) \e(-k \theta) \: d\theta \right| &\leq \int_{\theta_0}^{\infty} \exp\left(-\frac{v_t}{2} \theta^2 \right)  \: d\theta = \frac{1}{\theta_0 v_t} \exp\left(-\frac{v_t}{2} \theta_0^2 \right) \\
    &= \frac{1}{C v_t} \sqrt{\frac{N}{\log N}} N^{-C^2 v_t/(2N)} \leq \frac{1}{C m N \log N} = O\left(\frac{1}{N \log N}\right),
\end{align*}
where the last inequality follows from Proposition \ref{prop:variance_ineq} and $C^2 \geq 1/m$.

Now, by Proposition \ref{prop:normal_approx} the error introduced when approximating $\gamma_t$ is bounded by
$$ \left|\int_{-\theta_0}^{\theta_0} (\gamma_t(\theta) -\hat{\gamma}_t(\theta)) \e(-k \theta) \: d\theta \right| \leq  \int_{-\theta_0}^{\theta_0} K(\theta_0) N |\theta|^3 \: d\theta = O(N \theta_0^4) = O\left(\frac{ (\log N)^2}{N} \right). $$
Finally, the tails of the integral \eqref{eq:delta_int} can be estimated with the help of Proposition \ref{prop:char_fun_bound} and  \ref{prop:variance_ineq}:
$$
    \left|\int_{\theta_0\leq |\theta| \leq \pi} \gamma_t(\theta) \e(-k \theta) \: d\theta \right| \leq 2\int_{\theta_0}^{\pi} \exp(-LN\theta^2) \: d\theta \leq \frac{1}{L N \theta_0} \exp(-LN\theta_0^2) = O \left( \frac{1}{N \log N} \right),
$$  
where we have again used \eqref{eq:tail_estimate} and $C \geq 1/(2L)$. 

Adding up \eqref{eq:main_term} and the error terms, we get the statement of Theorem \ref{thm:main}.

\section{Basic properties and recurrences} \label{sec:basic}

In this section we establish some basic facts and derive recursive formulas, which will serve as a starting point towards proving Propositions \ref{prop:variance_ineq} -- \ref{prop:char_fun_bound}. 

\subsection{Existence of the densities}

To begin, we describe the modification of $d_t^w$ in the case $w=0^\ell$, mentioned in Section \ref{sec:intro}. More precisely, we define
\begin{equation} \label{eq:d_modification}
    d_{t}^{0^\ell}(n) = \occ{0^\ell}{n+t}- \occ{0^\ell}{n} - |(n+t)_2| + |(n)_2|,
\end{equation}
where the two last terms ``correct'' the behavior of the function.
In particular, if the canonical binary expansions $(n+t)_2$ and $(n)_2$ have identical length, then this formula is consistent with the definition \eqref{eq:d_def} for general $w$. Moreover, for each fixed $t$ the set of such $n$ has density $1$, hence the densities $\delta^{0^\ell}_t(k)$ remain the same regardless of this modification.

We now give a simple but useful fact, which says that $d_t(n)$ only depends on the digits affected by the addition $n+t$, together with a ``buffer'' of length $\ell-1$. The modification described above  allows us to state it in a consistent way for all $w$. 

\begin{lemma} \label{lem:no_carry}
Let $x,z \in \{\tO,\tl\}^*$ be such that $|x|=|z|$, and let $u \in \{\tO,\tl\}^{\ell-1}$. Then for any $v \in \{\tO,\tl\}^*$ we have
$$  |vuz|_w - |vux|_w = |uz|_w-|ux|_w.  $$
In particular, if $t \in \N$ and $[z]_2=[x]_2+t$, then
    $$d_t([vux]_2) =  |uz|_w - |ux|_w.$$
\end{lemma}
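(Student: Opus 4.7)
The plan is to prove the combinatorial identity $|vuz|_w - |vux|_w = |uz|_w - |ux|_w$ first, and then derive the claim about $d_t$ from it. For the identity I would classify each occurrence of $w$ in $vux$ by its start position $i \in \{0, 1, \ldots, |v| + |x| - 1\}$ into three disjoint categories: $(a)$ occurrences lying entirely in $v$, where $i + \ell - 1 < |v|$; $(b)$ straddling occurrences with $|v| - \ell + 1 \leq i \leq |v| - 1$; and $(c)$ occurrences entirely in $ux$, where $i \geq |v|$. Category $(a)$ contributes exactly $|v|_w$. Since $|u| = \ell - 1$, the portion of a straddling occurrence reaching into $ux$ has length at most $\ell - 1$ and is therefore contained in $u$, so category $(b)$ contributes a number depending only on $v$ and $u$. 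Finally, the shift $i \mapsto i - |v|$ identifies category $(c)$ with the full set of occurrences of $w$ in $ux$, contributing $|ux|_w$. The same decomposition applied to $vuz$ changes only the third summand (to $|uz|_w$), and subtracting gives the identity.

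For the second part, a direct calculation from $|z| = |x|$ and $[z]_2 = [x]_2 + t$ shows $[vuz]_2 = [vux]_2 + t$. Setting $n = [vux]_2$, it suffices to prove
\[ \occ{w}{n+t} - \occ{w}{n} = |vuz|_w - |vux|_w \]
for $w$ other than $\tO^\ell$, and the corresponding identity modified by \eqref{eq:d_modification} when $w = \tO^\ell$. My plan is to analyse the discrepancy $\Delta(s) := |s|_w - \occ{w}{[s]_2}$ between a raw string count and the normalised count \eqref{eq:convention}. For $w$ containing a $\tl$ this discrepancy depends only on $w$, the number $k$ of leading zeros of $s$, and the first $\max(\ell - k - 1, 0)$ symbols of the canonical expansion $([s]_2)_2$, and in particular it vanishes once $k \geq \ell - 1$. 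Applying this to $s = vux$ and $s' = vuz$, which have the same length $|vux| = |vuz|$ and share the prefix $vu$, I would split into two subcases: if $vu$ contains a $\tl$ then both leading-zero counts are determined by $vu$ (hence $k = k'$) and the relevant prefixes of $(n)_2$ and $(n+t)_2$ also sit inside $vu$ and coincide, so $\Delta(vux) = \Delta(vuz)$; if $vu = \tO^{|vu|}$ then $k, k' \geq \ell - 1$ and both discrepancies vanish. Either way they cancel, and combined with the string identity this yields the claim.

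The case $w = \tO^\ell$ requires separate bookkeeping since padding with zeros does introduce new occurrences: here $\Delta(s) = \max(k - \ell + 1, 0)$, and the modified definition of $d_t^{\tO^\ell}$ carries the extra term $|(n)_2| - |(n+t)_2| = k' - k$ (using $|vux| = |vuz|$). I would verify that this correction exactly cancels $\max(k' - \ell + 1, 0) - \max(k - \ell + 1, 0)$ in the same two subcases: when $vu$ contains a $\tl$ both sides vanish because $k = k'$, and when $vu$ is all zeros the maxima become $k - \ell + 1$ and $k' - \ell + 1$. The main obstacle throughout is bookkeeping rather than depth: the first-part identity is essentially a partition of positions, and the only genuine complication is verifying that the convention of padding with $\ell - 1$ zeros, together with the special modification for $w = \tO^\ell$, contribute equally to $vux$ and $vuz$ and therefore drop out of the difference.
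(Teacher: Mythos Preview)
Your proposal is correct and follows essentially the same approach as the paper. The paper's argument is slightly more streamlined in two places: for the first identity it uses only a two-way split (occurrences contained in $vu$ versus in $ux$, since $|u|=\ell-1$ forces every occurrence into one of these), merging your categories (a) and (b); and for the second part with $w\neq\tO^\ell$ it bypasses the discrepancy analysis entirely by observing that $\occ{w}{[vux]_2}=|\tO^{\ell-1}vux|_w$ (extra leading zeros beyond $\ell-1$ create no new occurrences when $w$ contains a $\tl$) and then applying the first-part identity directly with $\tO^{\ell-1}v$ in place of $v$. Your discrepancy function $\Delta$ and case split achieve the same cancellation but with more bookkeeping; the $w=\tO^\ell$ case is handled the same way in both.
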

\begin{proof}
Since $|u|=\ell-1$, any occurrence of $w$ in $vux$ (resp.\ $vuz$) must be either fully contained in either $vu$ or $ux$ (resp.\ $uz$). Hence,
$$  |vuz|_w - |vux|_w = |vu|_w +|uz|_w -( |vu|_w + |ux|_w ) = |uz|_w-|ux|_w. $$
In the second part of the statement, we have $[vux]_2 + t = 2^{|x|}[vu]_2+[x]_2+t = [vuz]_2$ (here we use the assumption $|x|=|z|$). If $w \neq \tO^\ell$, then
$$ d_t([vux]_2) = |0^{\ell-1}vuz|_w - |0^{\ell-1}vux|_w = |uz|_w-|ux|_w. $$  

If $w = \tO^\ell$ and $vu$ contains a $\tl$, then the canonical binary expansions of $n=[vux]_2$ and $n+t=[vuz]_2$ (obtained by removing leading zeros) have the same length, which again yields the desired formula. Finally, if  $w = \tO^\ell$ and $vu$ is a string of zeros, then we can write $z = \tO^k (n+t)_2$ and $x= \tO^m (n)_2$ for some $m \geq k \geq 0$. Then 
\begin{align*}
     d_t^{\tO^\ell}(n) &= |(n+t)_2|_{\tO^\ell} -|(n)_2|_{\tO^\ell} + |(n+t)_2| - |(n)_2| \\
     &= (|\tO^{\ell-1} z|_{\tO^\ell} - k) - (|\tO^{\ell-1} x|_{\tO^\ell} - m) -m+k = |uz|_{\tO^\ell}-|ux|_{\tO^\ell}.
\end{align*}   
\end{proof}


We now argue that for each $t \in \N$ the densities $\delta_t(k)$ indeed exist. For a word $x \in \{\tO,\tl\}^*$ we use the notation 
$$\mathcal{A}(x) = 2^{|x|}\N + [x]_2,$$ that is, $\mathcal{A}(x)$ is the arithmetic progression consisting of $n \in \N$ whose binary expansion ends with $x$.
For $t \in \N$ put $h(t) = |(t)_2|$ and consider the family $\mathcal{F}_t$, consisting of all arithmetic progressions of the form $\mathcal{A}(ux)$, where $u \in \{\tO,\tl\}^{\ell-1}$ and $x$ satisfies one of the following conditions:
\begin{enumerate}
    \item [(I)] $x \in \{\tO,\tl\}^{h(t)}$ and $[x]_2 < 2^{h(t)}-t$;
    \item [(II)] $x=\tO\tl^s y$, where $s \in \N$ and $y \in \{\tO,\tl\}^{h(t)}$ is such that $[y]_2 \geq 2^{h(t)}-t$.
\end{enumerate}
Note that $\mathcal{F}_t$ is a partition of $\N$ and only depends on $w$ through $\ell$.

The following proposition shows that the sets $\mathcal{D}_{t}(k)$ are unions of progressions from $\mathcal{F}_t$ and gives some of their properties. 
\begin{proposition} \label{prop:arith_prog}
For each $t \in \N$ and $k \in \Z$ the set $\mathcal{D}_{t}(k)$ is a  (possibly infinite or empty) union of arithmetic progressions from $\mathcal{F}_t$. 

    Moreover, the following properties hold:
    \begin{enumerate}
        \item[(a)] if $w \not\in \{\tO^\ell,\tl^\ell\}$, then $\mathcal{D}_{t}(k) = \varnothing$ when $|k|> h(t)/2+3$;
        \item[(b)] if $w \in \{\tO^\ell,\tl^\ell\}$, then $\mathcal{D}_{t}(k)$ is a finite union of arithmetic progressions from $\mathcal{F}_t$, and their differences are at most  $2^{2h(t)+|k| +\ell-1}$.
    \end{enumerate}

    
   \end{proposition}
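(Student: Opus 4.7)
The plan is to combine the partition $\mathcal{F}_t$ with Lemma~\ref{lem:no_carry}. The first step is to verify that $\mathcal{F}_t$ actually partitions $\N$ and that $d_t$ is constant on each of its progressions. For any $n \in \N$, let $x$ denote the lowest $h(t)$ bits of $n$. If $[x]_2 + t < 2^{h(t)}$, the addition of $t$ does not propagate a carry beyond position $h(t)-1$, placing $n$ in a case-(I) progression. Otherwise the carry propagates through a (possibly empty) block $\tl^s$ lying above $x$ and is absorbed by the first $\tO$ it meets, which uniquely assigns $n$ to a case-(II) progression with parameters $s$, $y = x$, and $u$ given by the next $\ell - 1$ bits. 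In both cases all bits strictly above the absorbing $\tO$ (in case (I), all bits from position $h(t)$ upward) are unchanged by $n \mapsto n + t$. Writing $n = [vux]_2$ accordingly, we obtain $n + t = [vuz]_2$ for some $z$ of length $|x|$ depending only on $x$ and $t$; Lemma~\ref{lem:no_carry} then gives $d_t(n) = |uz|_w - |ux|_w$, which is constant on $\mathcal{A}(ux)$. Hence $\mathcal{D}_t(k)$ is the union of those $\mathcal{A}(ux) \in \mathcal{F}_t$ satisfying $|uz|_w - |ux|_w = k$.

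For part (a), I would exploit that any $w$ containing both digits has minimal period at least $2$, which limits the number of $w$-occurrences in a binary string of length $L$ to at most $\lfloor (L-\ell)/2 \rfloor + 1$. In case (I) both $ux$ and $uz$ have length $\ell - 1 + h(t)$, so each of $|ux|_w$ and $|uz|_w$ is bounded by $h(t)/2 + O(1)$, and therefore $|d_t(n)| = \bigl||uz|_w - |ux|_w\bigr| \le h(t)/2 + O(1)$. In case (II), windows lying entirely inside the homogeneous blocks $\tl^s$ (in $ux$) and $\tO^s$ (in $uz$) contribute nothing, so only the $O(1)$ windows straddling the block boundaries, together with the occurrences inside $y$ or $y'$, enter the count. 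The same periodicity bound applied to $y, y'$ gives $|y|_w, |y'|_w \le h(t)/2 + O(1)$, and a careful joint analysis of the boundary terms yields the explicit constant $|d_t(n)| \le h(t)/2 + 3$.

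For part (b), I take $w = \tl^\ell$ (the $\tO^\ell$ case is analogous modulo \eqref{eq:d_modification}). Case (I) contributes only $2^{\ell-1}(2^{h(t)} - t)$ progressions of common difference $2^{\ell-1+h(t)}$, trivially within the claimed bound. In case (II), $|u\tl\tO^s y'|_{\tl^\ell}$ is bounded independently of $s$ (only a window inside $u\tl$ or inside $y'$ can equal $\tl^\ell$, since any window touching $\tO^s$ with $s \ge 1$ contains a $\tO$), whereas $|u\tO\tl^s y|_{\tl^\ell} \ge \max(s - \ell + 1, 0)$ from windows lying inside $\tl^s$. Hence $d_t(n)$ decreases essentially linearly in $s$, and the equation $d_t(n) = k$ forces $s$ into a bounded range. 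Tracking the boundary contributions from $u\tl$ and from the leading bits of $y$ and $y'$ yields $s \le h(t) + |k| - 1$, so the common difference $2^{|ux|} = 2^{\ell + s + h(t)}$ is at most $2^{2h(t) + |k| + \ell - 1}$, as claimed.

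The main obstacle is the explicit bookkeeping in both parts. For (a), converting the generic $O(1)$ boundary contribution into the additive constant $3$ requires a combined use of the periodicity bound on interior $w$-occurrences and a careful treatment of windows straddling $u$, $y$, or the central block. For (b), one must precisely identify how the boundary terms near $u\tl$, $\tl^s y$, and the leading run of $y$ shift the admissible range of $s$, which amounts to a small case analysis depending on whether $u = \tl^{\ell-1}$ and on the leading digits of $y, y'$.
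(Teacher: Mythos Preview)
Your proposal is correct and follows essentially the same route as the paper. The paper also shows $d_t$ is constant on each $\mathcal{A}(ux)\in\mathcal{F}_t$ via Lemma~\ref{lem:no_carry}, then for (a) uses the overlap bound $|v|_w\le (|v|-\ell)/p+1$ with $p\ge 2$ (equivalent to your ``minimal period $\ge 2$'' observation) and splits case~(II) into small and large $s$, and for (b) argues exactly as you do that $|u\tO\tl^s y|_{\tl^\ell}$ grows linearly in $s$ while $|u\tl\tO^s z|_{\tl^\ell}$ stays bounded, forcing $s\le h(t)+|k|-1$.
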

\begin{proof}


Put $h = h(t)$ for brevity. Observe that for $\mathcal{A}(ux) \in \mathcal{F}_t$, if we write a binary expansion of $n \in \mathcal{A}(ux)$ as $vux$, the addition $n+t$ only affects the suffix $x$.
 More precisely, if $x$ is of the form (I), then let $z \in \{\tO,\tl\}^h$ be such that $[x]_2 + t = [z]_2$. By Lemma \ref{lem:no_carry}, for all $n \in \mathcal{A}(ux)$
 \begin{equation} \label{eq:d_caseI}
d_t(n) = |uz|_w - |ux|_w. 
\end{equation}
If $x$ is of the form (II), let $z \in \{\tO,\tl\}^h$ be such that $[y]_2 + t = [\tl z]_2$. Again, by Lemma \ref{lem:no_carry}, for all $n \in \mathcal{A}(ux)$ we have
\begin{equation} \label{eq:d_caseII}
     d_t(n) =   |u\tl\tO^sz|_w - |u\tO\tl^sy|_w.
\end{equation}
In either case, we can see that the function $d_t$ is constant on each progression in $\mathcal{F}_t$, which implies the main part of the statement.





We now proceed to prove (a) and (b). Starting with (a), let $w \not\in \{\tO^\ell,\tl^\ell\}$. Consider an overlap of $w$, namely a word $r$ such that $w = w_1 r = r w_2$ for some nonempty $w_1,w_2$. Let $q = |r|$ be the maximal length of an overlap and put $p = \ell-q$. Then we have $0 \leq q \leq \ell-2$ (or $2 \leq p \leq \ell)$ and for any word $v \in \{\tO,\tl\}^*$ of length $|v| \geq \ell$ the inequality
$$ |v|_w \leq \frac{|v|-q}{p} = \frac{|v|-\ell}{p}  +1. $$
Applying this to \eqref{eq:d_caseI}, if $x$ is of the form (I) and $n\in\mathcal{A}(ux)$, we get
$$ |d_t(n)| \leq \frac{|ux|-\ell}{p}+1 \leq \frac{h -1}{2} +1 = \frac{h+1}{2}.$$
In the case (II), we need to bound the values $|u\tl\tO^sz|_w, |u\tO\tl^sy|_w$ appearing in \eqref{eq:d_caseII}. This is done similarly for both expressions so we focus on the first one. 
When $s \leq p$, we obtain
$$|u\tl\tO^sz|_w \leq \frac{|u\tl\tO^sz|-\ell}{p}+1 = \frac{s+h}{p}+1 \leq \frac{h}{2} + 2.$$
On the other hand, when $s > p$, the word $w$ cannot contain $\tO^s$ as a factor because of $w \neq \tO^\ell$. Hence, there are at most $2$ occurrences of $w$ in $u\tl\tO^sz$ which overlap with $0^s$: one which ends, and one which begins inside $\tO^s$. We thus obtain
$$|u\tl\tO^sz|_w \leq |u\tl|_w + 2 + |z|_w \leq 3 + \frac{h-\ell}{p}+1 \leq \frac{h}{2}+3. $$

Moving on to (b), we only consider $w=\tl^\ell$, as the proof for $w=\tO^\ell$ is similar. In the case (I) there are only finitely many progressions $\mathcal{A}(ux)$.
In the case (II) we have
$|u\tl\tO^sz|_w = |u\tl|_w+|z|_w \leq h-\ell+2$ and $|u\tO\tl^sy|_w \geq s-\ell+1$. This means that $ |d_t(n)| \geq \max\{ s-h+1,0\}$ for $n \in \mathcal{A}(ux)$. Thus, $\mathcal{D}_{t}(k)$ can be expressed as a finite union of arithmetic progressions: $\mathcal{A}(ux)$ of difference $2^{h+\ell-1}$, and $\mathcal{A}(u\tO\tl^sy)$ of difference $2^{h+\ell+s}$, where the inequality $|k| \geq s+h-1$ must be satisfied. After a small rearrangement, we get precisely (b). 

\end{proof}

This result implies that the densities $\delta_t(k) = \dens \mathcal{D}_t(k)$ are indeed well-defined and sum to $1$ for fixed $t$. Therefore, $\delta_t$ can be viewed as a probability mass function on $\Z$.

In the following result, we show a symmetry relation between the distributions $\delta^w_t$ and $\delta^{\overline{w}}_t$. 

\begin{proposition} \label{prop:symmetry}
    For all $t \in \N$ and $k \in \Z$ we have
    $$ \delta^{\overline{w}}_{t} (k) = \delta^{w}_{t} (-k).  $$
\end{proposition}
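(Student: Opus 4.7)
The plan is to construct a density-preserving involution $\Phi$ on the family $\mathcal{F}_t$ of arithmetic progressions introduced before Proposition \ref{prop:arith_prog}, sending the progressions that comprise $\mathcal{D}^w_t(k)$ bijectively to those comprising $\mathcal{D}^{\overline{w}}_t(-k)$. The driving identity is the trivial $|\alpha|_w = |\overline{\alpha}|_{\overline{w}}$ for every binary word $\alpha$, which holds because complementing all bits turns occurrences of $w$ into occurrences of $\overline{w}$, and conversely.

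The case $t=0$ is immediate (both distributions are concentrated at $0$), so assume $t \geq 1$ and let $h=h(t)$. I would define $\Phi$ on each type of progression separately. For a type-(I) progression $\mathcal{A}(ux)$, let $z \in \{\tO,\tl\}^h$ be such that $[z]_2 = [x]_2+t$ and set $\Phi(\mathcal{A}(ux)) = \mathcal{A}(\overline{u}\overline{z})$; for a type-(II) progression $\mathcal{A}(u\tO\tl^s y)$ with $[\tl z]_2 = [y]_2+t$, set $\Phi(\mathcal{A}(u\tO\tl^s y)) = \mathcal{A}(\overline{u}\tO\tl^s \overline{z})$. Short elementary computations then confirm that in case (I) the image satisfies $[\overline{z}]_2 < 2^h - t$, while in case (II) one has $[\overline{z}]_2 \geq 2^h - t$ together with $[\tl\overline{y}]_2 = [\overline{z}]_2 + t$, so $\Phi$ maps $\mathcal{F}_t$ to itself. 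Another short computation shows $\Phi\circ\Phi = \mathrm{id}$, and the two partners in each pair have the same length as defining words, hence the same natural density.

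The identity $d^w_t(n) = -d^{\overline{w}}_t(m)$ for $n \in \mathcal{A}(ux)$ and $m \in \Phi(\mathcal{A}(ux))$ then follows at once from formulas \eqref{eq:d_caseI} and \eqref{eq:d_caseII} combined with the complement identity; for example in case (II),
$$ d^{\overline{w}}_t(m) = |\overline{u}\tl\tO^s \overline{y}|_{\overline{w}} - |\overline{u}\tO\tl^s \overline{z}|_{\overline{w}} = |u\tO\tl^s y|_w - |u\tl\tO^s z|_w = -d^w_t(n), $$
and case (I) is analogous. Summing densities over the progressions comprising $\mathcal{D}^w_t(k)$ and their images in $\mathcal{D}^{\overline{w}}_t(-k)$ yields the claimed $\delta^w_t(k) = \delta^{\overline{w}}_t(-k)$. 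The only case meriting separate attention is $w \in \{\tO^\ell,\tl^\ell\}$, where the modified definition \eqref{eq:d_modification} agrees with \eqref{eq:d_def} outside a set of density $0$ (for fixed $t$), hence does not affect $\delta^w_t$. The main, though mild, technical hurdle is the case-(II) bookkeeping needed to verify that $\Phi$ is well-defined and involutive: it reduces to tracking the relations between $y,z,\overline{y},\overline{z}$ modulo $2^h$.
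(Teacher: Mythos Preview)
Your proposal is correct and follows essentially the same approach as the paper: both construct the density-preserving bijection $\mathcal{A}(ux)\mapsto\mathcal{A}(\overline{ux'})$ on $\mathcal{F}_t$ (where $[x']_2=[x]_2+t$, $|x'|=|x|$) and use the identity $|\alpha|_w=|\overline{\alpha}|_{\overline{w}}$ together with $[\overline{x'}]_2+t=[\overline{x}]_2$ to flip the sign of $d_t$. The only cosmetic difference is that the paper treats cases (I) and (II) uniformly via Lemma~\ref{lem:no_carry}, whereas you unpack them separately using \eqref{eq:d_caseI} and \eqref{eq:d_caseII}; the resulting map $\Phi$ is the same in both cases.
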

\begin{proof}
Consider the same family $\mathcal{F}_t$ of arithmetic progressions $\mathcal{A}(ux)$, as in the proof of Proposition \ref{prop:arith_prog}. 
For each $x$ let $x' \in \{\tO,\tl\}^*$ be such that $|x'| = |x|$ and $[x]_2 + t = [x']_2$.
We claim that if  $\mathcal{A}(ux) \subset \mathcal{D}^w_t(k)$
for some $k \in \Z$, then  $\mathcal{A}(\overline{ux'}) \subset \mathcal{D}^{\overline{w}}_t(-k)$. 
Observe that $[\overline{x'}]_2 + t =  2^{|x|}-1 - [x']_2 + t = 2^{|x|}-1 - [x]_2 =  [\overline{x}]_2$.
Hence, by Lemma \ref{lem:no_carry} for all $n \in \mathcal{A}(ux)$ and $m \in \mathcal{A}(\overline{ux'})$ we have 
$$ d^w_t(n)  = |ux'|_w - |ux|_w = |\overline{ux'}|_{\overline{w}} - |\overline{ux}|_{\overline{w}} = -d^{\overline{w}}_t(m), $$
which proves our claim. 

To finish the proof, note that the mapping $\mathcal{A}(ux) \mapsto \mathcal{A}(\overline{ux'})$ is a bijection on $\mathcal{F}_t$ and preserves the density of the progression. Therefore, it also preserves density when extended to the sets $\mathcal{D}^w_t(k)$. 
\end{proof}






\subsection{Recurrence relations} \label{subsec:recurrence}

Our next goal is to establish recurrence relations for the distributions $\delta_t$ and associated characteristic functions $\gamma_t$. 
To achieve this, we partition $\mathcal{D}_t(k)$ into $2^{\ell-1}$ subsets, each containing numbers with a different residue class modulo $2^{\ell-1}$. More precisely, for $j = 0,1, \ldots, 2^{\ell-1}-1$ and $k \in \Z$ we define
$$ \mathcal{D}_{t,j}(k) = \{n \in \N: d_t(2^{\ell-1}n+j)=k \} $$
so that the aforementioned partition is
$$ \mathcal{D}_t(k) = \bigcup_{j=0}^{2^{\ell-1}-1} \mathcal{D}_t(k) \cap (2^{\ell-1} \N + j) = \bigcup_{j=0}^{2^{\ell-1}-1} (2^{\ell-1} \mathcal{D}_{t,j}(k)+j).$$

We briefly discuss how Proposition \ref{prop:arith_prog} applies to the sets $\mathcal{D}_{t,j}(k)$. For fixed $k \in \Z$ write $\mathcal{D}_t(k) = \bigcup_{x \in X} \mathcal{A}(x)$, where $X \subset \{\tO,\tl\}^*$ is a finite set and $\mathcal{A}(x) \in \mathcal{F}_t$ for $x \in X$.
 Since each $\mathcal{A}(x)$ has difference $\geq 2^{\ell-1}$, the intersection $\mathcal{A}(x) \cap (2^{\ell-1} \N + j) = \mathcal{A}(x) \cap (2^{\ell-1} \mathcal{D}_{t,j}(k)+j)$ is either empty or equal to $\mathcal{A}(x)$. As a result, we have $\mathcal{D}_{t,j}(k) = \bigcup_{x \in X_j} \mathcal{A}(\widetilde{x})$ for some $X_j \subset X$, where $\widetilde{x}$ is obtained from $x$ by deleting its suffix of length $\ell-1$. By (a) in the same proposition, in the case $w \not\in \{1^\ell,0^\ell\}$ we have $\mathcal{D}_{t,j}(k) = \varnothing$ if $|k| > h/2 + \ell$. By (b), in the case $w \in \{\tO^\ell, \tl^\ell\}$ the union is finite, and the differences of $\mathcal{A}(\widetilde{x})$ can be bounded by $2^{2h+|k|}$, where $h=|(t)_2|$.

Moving on, it is clear that there exist densities
$$ \delta_{t,j}(k) = \dens \mathcal{D}_{t,j}(k), $$
and we have the equality
$$  \delta_t(k) = \frac{1}{2^{\ell-1}} \sum_{j=0}^{2^{\ell-1}-1} \delta_{t,j}(k). $$
If we introduce a probability distribution on $\N$, where the measurable sets are generated by $\mathcal{F}_t$ and their  probabilities are equal to densities, then $\delta_{t,j}$ equals the conditional probability of $\mathcal{D}_t(k)$, given $2^{\ell-1} \N + j$.

\begin{remark}
    The identity in Proposition \ref{prop:symmetry} can be refined to 
$$\delta^w_{t,j}(k) = \delta^{\overline{w}}_{t,j'}(-k),$$ where $j' = -(j+t+1) \bmod{2^{\ell-1}}$ (this is not needed for our main result). The proof is similar and left to the reader.
\end{remark}

We now derive recurrence relations involving the densities $\delta_{t,j}(k)$. To begin, unless $w=\tO^\ell, n=0$, the function $\occempty{w}$ satisfies the recurrence relation
$$ \occ{w}{n} = \occ{w}{\fl{n/2}} + 
\begin{cases}
1 &\text{if } n \equiv [w]_2 \pmod{2^{\ell}}, \\
0 &\text{if } n \not\equiv [w]_2 \pmod{2^{\ell}}.
\end{cases} $$
As a consequence, we obtain
\begin{equation} \label{eq:d_recurrence}
     d_t(n) = d_{t'} (\fl{n/2}) + \varphi(t,n),
\end{equation} 
where $t' = \fl{(t+n)/2} - \fl{n/2} =  \fl{t/2}+(tn \bmod{2})$, and
$$ \varphi(t,n) = \begin{cases}
1 &\text{if }  t \equiv [w]_2-n \not \equiv 0 \pmod{2^{\ell}}, \\
-1 &\text{if } t \not\equiv [w]_2 - n \equiv   0 \pmod{2^{\ell}}, \\
0 &\text{otherwise}.
\end{cases} $$
We note that \eqref{eq:d_recurrence} holds also for $w=\tO^\ell$ and all $n \in \N$ due to the convention \eqref{eq:d_modification}.
Using this relation, we obtain the following result.

\begin{proposition} \label{prop:set_rec}
    For all $t\in\N, j \in\{0,1,\ldots,2^{\ell-1}-1\}$ and $k \in \Z$ we have
    $$ \delta_{t,j}(k) = \frac{1}{2}\delta_{t',\fl{j/2}}(k- \varphi(t,j)) + \frac{1}{2}\delta_{t',\fl{j/2}+2^{\ell-2}}(k- \varphi(t,2^{\ell-1}+j)),$$
    where $t'= \fl{t/2}+(tj \bmod{2})$.
\end{proposition}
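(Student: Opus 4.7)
The plan is to lift the integer-level recurrence \eqref{eq:d_recurrence} to a set-level identity for $\mathcal{D}_{t,j}(k)$ and then pass to natural densities. Applying \eqref{eq:d_recurrence} to $m = 2^{\ell-1}n+j$, I first observe that because $\ell \geq 2$, the integer $m$ has the same parity as $j$, so $tm \bmod 2 = tj \bmod 2$ and the parameter produced by \eqref{eq:d_recurrence} is exactly $t' = \fl{t/2}+(tj \bmod 2)$, independently of $n$. Similarly, $\fl{m/2} = 2^{\ell-2}n + \fl{j/2}$.

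Next I would split on the parity of $n$ to identify $\varphi(t,m)$ and rewrite $\fl{m/2}$ in the form $2^{\ell-1}n' + j''$ so that membership in $\mathcal{D}_{t',j''}(\cdot)$ is visible. Since $\varphi(t,\cdot)$ depends only on its argument modulo $2^{\ell}$, and $m \bmod 2^{\ell}$ equals $j$ for $n$ even and $2^{\ell-1}+j$ for $n$ odd, writing $n=2n'$ and $n=2n'+1$ yields
\[
\fl{m/2} = 2^{\ell-1}n' + \fl{j/2}, \qquad \fl{m/2} = 2^{\ell-1}n' + (2^{\ell-2}+\fl{j/2}),
\]
respectively. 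Therefore $n \in \mathcal{D}_{t,j}(k)$ with $n$ even corresponds to $n/2 \in \mathcal{D}_{t',\fl{j/2}}(k-\varphi(t,j))$, and with $n$ odd to $(n-1)/2 \in \mathcal{D}_{t',\fl{j/2}+2^{\ell-2}}(k-\varphi(t,2^{\ell-1}+j))$.

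Consequently, $\mathcal{D}_{t,j}(k)$ is the disjoint union
\[
2\,\mathcal{D}_{t',\fl{j/2}}(k-\varphi(t,j)) \;\cup\; \bigl(2\,\mathcal{D}_{t',\fl{j/2}+2^{\ell-2}}(k-\varphi(t,2^{\ell-1}+j))+1\bigr),
\]
and each of the operations $A \mapsto 2A$, $A \mapsto 2A+1$ halves natural density in $\N$. Since the densities on the right are well-defined by Proposition \ref{prop:arith_prog}, summing the two halved contributions gives precisely the claimed identity. I do not anticipate a genuine obstacle; the only bookkeeping to watch is that $\fl{j/2}+2^{\ell-2}$ lies in $\{0,\ldots,2^{\ell-1}-1\}$ (indeed it lies in $\{2^{\ell-2},\ldots, 2^{\ell-1}-1\}$) so that the second index on the right-hand side is a legitimate residue class.
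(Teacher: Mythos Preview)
Your argument is correct and is essentially the same as the paper's: you split $\mathcal{D}_{t,j}(k)$ according to the parity of $n$, apply the recurrence \eqref{eq:d_recurrence} to $m=2^{\ell-1}n+j$, and obtain the disjoint union $2\mathcal{D}_{t',\fl{j/2}}(k-\varphi(t,j))\cup\bigl(2\mathcal{D}_{t',\fl{j/2}+2^{\ell-2}}(k-\varphi(t,2^{\ell-1}+j))+1\bigr)$, then pass to densities. The paper does exactly this, just more tersely.
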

\begin{proof}
We have the equalities
\begin{align*} \mathcal{D}_{t,j}(k) &= \{2n \in \N: d_t(2^{\ell}n+j) = k\} \cup \{2n+1 \in \N: d_t(2^{\ell}n+2^{\ell-1}+j) = k\} \\
&= 2\{n \in \N: d_{t'}(2^{\ell-1}n+\fl{j/2}) + \varphi(t,j) = k   \} \\
&\cup (2\{n \in \N: d_{t'}(2^{\ell-1}n+2^{\ell-2}+\fl{j/2}) + \varphi(t,2^{\ell-1}+j) = k   \}+1) \\
&= 2\mathcal{D}_{t',\fl{j/2}}(k- \varphi(t,j)) 
\cup(2\mathcal{D}_{t',2^{\ell-2}+\fl{j/2}}(k- \varphi(t,2^{\ell-1}+j))+1),
\end{align*}
Corresponding recurrence relations for the densities $\delta_{t,j}(k)$ follow immediately.
\end{proof}

The initial conditions for $\delta_{t,j}$ are
$$\delta_{0,j}(k) =\begin{cases}
    1 &\text{if } k=0, \\
    0 &\text{if } k\neq 0.
\end{cases}$$
For $t=1$ and $j$ odd, the formula in Proposition \ref{prop:set_rec} also involves $\delta_{1,j'}$  on the right-hand side:
$$ \delta_{1,j}(k) = \frac{1}{2}\delta_{1,\fl{j/2}}(k- \varphi(1,j)) + \frac{1}{2}\delta_{1,\fl{j/2}+2^{\ell-2}}(k- \varphi(1,2^{\ell-1}+j)).   $$
This leads to a system of linear equations, where in the case $w\in\{\tO^\ell,\tl^\ell\}$ there are infinitely many variables, corresponding to nonzero densities $\delta_{1,j}(k)$. It is more convenient to recover these values using characteristic functions (see Proposition \ref{prop:char_fun_rec} below).








    

Let $\gamma_t$ denote the characteristic function of the distribution $\delta_t$, namely
$$  \gamma_t(\theta) =  \sum_{k \in \Z} \e(k \theta) \delta_t(k).   $$
Similarly the characteristic function of  $\delta_{t,j}$ will be denoted by
$$ \gamma_{t,j}(j,\theta) =  \sum_{k \in \Z} \e(k \theta) \delta_{t,j}(k).  $$
We then have the equality
$$ \gamma_t(\theta) = \frac{1}{2^{\ell-1}} \sum_{j=0}^{2^{\ell-1}-1}  \gamma_{t,j}(j,\theta). $$
Furthermore, Proposition \ref{prop:set_rec} translates into the recurrence relation
\begin{equation} \label{eq:gamma_rec}
    \gamma_{t,j}(\theta) = \frac{1}{2} \e(\varphi(t,j)\theta)  \gamma_{t',\fl{j/2}}(\theta) + \frac{1}{2} \e(\varphi(t,2^{\ell-1}+j) \theta) \gamma_{t',\fl{j/2}+2^{\ell-2}}(\theta),
\end{equation} 
where again $t'= \fl{t/2}+(tj \bmod{2})$.
We can rewrite it in a more convenient matrix form. First, define length $2^{\ell-1}$ column vectors
$$ \Gamma_t = \begin{bmatrix}
\gamma_{t,0} & \cdots & \gamma_{t,2^{\ell-1}-1}
\end{bmatrix}^T$$
and $2^{\ell-1} \times 2^{\ell-1}$ coefficient matrices
$$ A_t(\theta) = [a_t(j,k)]_{0 \leq j,k < 2^{\ell-1}}, \quad B_t(\theta) = [b_t(j,k)]_{0 \leq j,k < 2^{\ell-1}}, \quad C_t(\theta) = [c_t(j,k)]_{0 \leq j,k < 2^{\ell-1}}, $$
where 
\begin{align*}
a_{t}(j,\fl{j/2}) &= \frac{1}{2} \e(\varphi(t,j)\theta), \\
a_{t}(j,\fl{j/2}+2^{\ell-2}) &= \frac{1}{2} \e(\varphi(t,j+2^{\ell-1})\theta), \\
a_{t}(j,k) &= 0 \quad \text{otherwise},
\end{align*}
and
$$ b(j,k) = \begin{cases}
    a(j,k) &\text{if } j \text{ is even}, \\
    0 &\text{if } j \text{ is odd}, \\
\end{cases} $$
and
$$ c(j,k) = \begin{cases}
    0 &\text{if } j \text{ is even}, \\
    a(j,k) &\text{if } j \text{ is odd}. \\
\end{cases} $$
In other words, $B_t$ (resp.\ $C_t$) is obtained by replacing all entries in odd (resp.\ even) rows of $A_t$ with zeros (recall that indexing of entries starts at $0$). Visually, $A_t$ can be written the form
$$  A_t =   \frac{1}{2}
\begin{bmatrix}
    p_0 & 0 & \cdots & 0 & p_{2^{\ell-1}} & 0 & \cdots & 0 \\
    p_1 & 0 & \cdots & 0 & p_{2^{\ell-1}+1} & 0 & \cdots & 0 \\
    0 & p_2 & \cdots & 0 & 0 & p_{2^{\ell-1}+2} & \cdots & 0 \\
    0 & p_3 & \cdots & 0 & 0 & p_{2^{\ell-1}+3} & \cdots & 0 \\
    \vdots & \vdots &  \ddots & \vdots & \vdots & \vdots & \ddots & \vdots \\
    0 & 0 & \cdots & p_{2^{\ell-1}-2} & 0 & 0 & \cdots & p_{2^{\ell}-2} \\
    0 & 0 & \cdots & p_{2^{\ell-1}-1} & 0 & 0 & \cdots & p_{2^{\ell}-1}
\end{bmatrix},$$
where in the case $t \equiv 0 \pmod{2^\ell}$ we have $p_m = 1$ for all $m$, while for
$t \not\equiv 0 \pmod{2^\ell}$ we get
$$p_m = \begin{cases}
    \e(-\theta) &\text{if }   m = [w]_2, \\
    \e(\theta) &\text{if }   m \equiv [w]_2-t \pmod{2^{\ell}}, \\
    1  &\text{otherwise}.
\end{cases}$$
For example, for $w=\tO\tl\tl$ we have:
\begin{align*}
A_0 &= \frac{1}{2}\begin{bmatrix}
1 & 0 & 1 & 0 \\ 1 & 0 & 1 & 0 \\ 0 & 1 & 0 & 1 \\ 0 & 1 & 0 & 1
\end{bmatrix},  \qquad
A_1 = \frac{1}{2}\begin{bmatrix}
1 & 0 & 1 & 0 \\ 1 & 0 & 1 & 0 \\ 0 & e^{I\theta} & 0 & 1 \\ 0 & e^{-I\theta} & 0 & 1
\end{bmatrix},  \qquad
A_2 = \frac{1}{2}\begin{bmatrix}
1 & 0 & 1 & 0 \\ e^{I\theta} & 0 & 1 & 0 \\ 0 & 1 & 0 & 1 \\ 0 & e^{-I\theta} & 0 & 1
\end{bmatrix}, \\
&\cdots, \qquad
A_6 = \frac{1}{2}\begin{bmatrix}
1 & 0 & 1 & 0 \\ 1 & 0 & e^{I\theta} & 0 \\ 0 & 1 & 0 & 1 \\ 0 & e^{-I\theta} & 0 & 1
\end{bmatrix},  \qquad
A_7 = \frac{1}{2}\begin{bmatrix}
1 & 0 & e^{I\theta} & 0 \\ 1 & 0 & 1 & 0 \\ 0 & 1 & 0 & 1 \\ 0 & e^{-I\theta} & 0 & 1
\end{bmatrix}, 
\end{align*}
and $A_{t+8} = A_t$.

We now state the main result of this section, which gives a recursive formula for the vectors $\Gamma_t$.

\begin{proposition} \label{prop:char_fun_rec}
For all $t \in \N$ we have
    \begin{align}
    \Gamma_{2t}(\theta) &=  A_{2t}(\theta) \Gamma_t(\theta ), \label{eq:Gamma_even} \\
    \Gamma_{2t+1}(\theta) &= B_{2t+1}(\theta) \Gamma_t(\theta) +  C_{2t+1}(\theta) \Gamma_{t+1}(\theta). \label{eq:Gamma_odd} 
\end{align}
Moreover, $\Gamma_0(\theta) = \mathbf{1}$ and $\Gamma_1(\theta)$ is the unique solution $\Gamma(\theta)$ of the system of equations
$$ (I - C_1(\theta)) \Gamma(\theta) = B_1(\theta) \mathbf{1},$$
where $I$ denotes the identity matrix of size $2^{\ell-1}$.
\end{proposition}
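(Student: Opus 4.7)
The statement has three pieces: verifying the two matrix recurrences, identifying the initial conditions, and establishing uniqueness of $\Gamma_1$. I would handle them in that order.

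For the recurrences, the plan is to repackage the componentwise identity (\ref{eq:gamma_rec}) into matrix form, splitting by the parity of the argument. If the input to (\ref{eq:gamma_rec}) is $2t$, then $t' = \fl{2t/2} + (2tj \bmod 2) = t$ for every $j$, so every entry of $\Gamma_{2t}$ is expressed purely in terms of entries of $\Gamma_t$; the two nonzero entries in the $j$-th row of $A_{2t}$ sit at columns $\fl{j/2}$ and $\fl{j/2}+2^{\ell-2}$ with coefficients $\tfrac{1}{2}\e(\varphi(2t,j)\theta)$ and $\tfrac{1}{2}\e(\varphi(2t,2^{\ell-1}+j)\theta)$, which reproduces (\ref{eq:gamma_rec}) and gives (\ref{eq:Gamma_even}). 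If the input is $2t+1$, then $t' = t$ for $j$ even and $t' = t+1$ for $j$ odd, so the even-indexed rows of $\Gamma_{2t+1}$ couple to $\Gamma_t$ while the odd-indexed rows couple to $\Gamma_{t+1}$; by construction $B_{2t+1}$ and $C_{2t+1}$ isolate precisely these two contributions, giving (\ref{eq:Gamma_odd}).

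The initial condition $\Gamma_0 = \mathbf{1}$ is immediate since $d_0(n) = 0$ for all $n$, making $\delta_{0,j}$ the point mass at $0$ and $\gamma_{0,j} \equiv 1$. Substituting $t = 0$ into (\ref{eq:Gamma_odd}) yields $\Gamma_1 = B_1 \mathbf{1} + C_1 \Gamma_1$, which rearranges to $(I - C_1)\Gamma_1 = B_1 \mathbf{1}$. Hence $\Gamma_1$ is certainly one solution; the content of the claim is uniqueness.

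I expect the main obstacle to be proving that $I - C_1(\theta)$ is invertible for every $\theta$. The plan is a maximum-modulus argument. By construction every even row of $C_1$ is identically zero, so any $v \in \ker(I - C_1)$ must vanish at all even indices. If $v \not\equiv 0$, pick $j$ with $|v_j| = \|v\|_\infty$; then $j$ is odd, and the equation $v_j = \tfrac{1}{2}\e(\alpha)v_{\fl{j/2}} + \tfrac{1}{2}\e(\beta)v_{\fl{j/2}+2^{\ell-2}}$ combined with the triangle inequality forces both successors $\fl{j/2}$ and $\fl{j/2}+2^{\ell-2}$ to also attain $\|v\|_\infty$. Hence the set $S$ of maximizing indices is closed under taking both successors; since any even successor would give $v = 0$ and contradict maximality, all successors must be odd. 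An induction on $s$ using the $2$-adic structure then forces $S \subseteq \{j \equiv 2^s - 1 \pmod{2^s}\}$ for every $s \leq \ell-1$, leaving only $j = 2^{\ell-1}-1$. But its successor $2^{\ell-2}-1$ must also lie in $S$ and is distinct, a contradiction. Therefore $\ker(I - C_1) = \{0\}$ and $\Gamma_1$ is uniquely determined.
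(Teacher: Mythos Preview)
Your treatment of the recurrences and initial conditions matches the paper's: both simply unpack \eqref{eq:gamma_rec} by parity and read off $\Gamma_0=\mathbf{1}$ and the linear system for $\Gamma_1$ from the case $t=0$.

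For uniqueness, however, you take a genuinely different route. The paper computes $\det(I-C_1(\theta))$ explicitly: it observes that conjugating by the bit-reversal permutation $[x]_2 \leftrightarrow [x^R]_2$ sends $I-C_1$ to a lower-triangular matrix, so the determinant is the product of diagonal entries, namely $\tfrac{1}{2}$ (or $\tfrac{1}{2}(2-\e(\pm\theta))$ when $w\in\{\tO^\ell,\tl^\ell\}$). Your argument instead shows $\ker(I-C_1(\theta))=\{0\}$ directly via a maximum-modulus propagation: a maximizing index must be odd, its two successors also maximize, hence are also odd, and iterating forces the maximizing set down to $\{2^{\ell-1}-1\}$, which is not closed under the successor map. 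This is correct and arguably cleaner as a pure invertibility proof, since it avoids discovering the right permutation. The trade-off is that the paper's explicit determinant is reused later (e.g.\ in the analysis of $\Gamma_1$ for $w=\tO^\ell$ in Section~\ref{sec:normal}), so the extra information is not wasted; your argument would need to be supplemented at that point.
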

\begin{proof}
    The recurrence relations satisfied by $\Gamma_t$ follow straight from \eqref{eq:gamma_rec}. 
    
    It is also clear that all components of $\Gamma_0$ are identically equal to $1$, while the system of equations satisfied by $\Gamma_1$ is a consequence of identity \eqref{eq:Gamma_odd} applied to $t=0$. Hence, it remains to show that the determinant of the matrix $I - C_1(\theta)$ is not identically $0$. We will in fact prove a bit more, namely that
    $$  \det(I - C_1(\theta)) = \frac{1}{2} \begin{cases}
        2 - \e(\theta) &\text{if } w=0^{\ell}, \\
        2 - \e(-\theta) &\text{if } w=1^{\ell}, \\
        1  &\text{otherwise}.
    \end{cases} $$
    In other words, we claim that $\det(I - C_1(\theta))$ is equal to the product of the entries on the diagonal, where the only contribution of $-C_1(\theta)$ is $-1/2 \e(\varphi(1,2^\ell-1)\theta) $ in the bottom right entry.    
    To prove this, we show that swapping row (resp.\ column) $[x]_2$ with row (resp.\ column) $[x^R]_2$, for each $x \in \{0,1\}^{\ell-1}$ transforms $I - C_1$ into a lower-triangular matrix.
    Since $I$ is unaffected by these operations, it is sufficient to focus on $C_1$. By definition, this matrix has nonzero elements precisely at positions $([u1]_2,[0u]_2), ([u1]_2,[1u]_2)$, where $u \in \{0,1\}^{\ell-2}$. After performing the described operation we get a matrix with nonzero elements at positions $([1v]_2,[v0]_2), ([1v]_2,[v1]_2)$, where $v = u^R \in \{0,1\}^{\ell-2}$. It is easy to see that $[v0]_2 < [v1]_2 \leq [1v]_2$, and our claim follows.
\end{proof}

\begin{remark}
  One can as well write relations for ``glued'' vectors $[\Gamma_{2^{\ell-1}t}^T, \Gamma_{2^{\ell-1}t+1}^T, \cdots, \Gamma_{2^{\ell-1}(t+1)}^T ]^T$ in block form so that they only involve two distinct coefficient matrices (with blocks being $A_t,B_t,C_t$ and zero matrices), as was the case in \cite{SobolewskiSpiegelhofer2023}. In the present paper, we find it more convenient to deal with a larger number of smaller matrices.
\end{remark}

As mentioned earlier, having an expression for $\Gamma_t$, we can extract the densities $\delta_{t,j}(k)$. Moreover, by the proof of the proposition  (or Proposition \ref{prop:set_rec}(a)), $\gamma_{t,j}$ is an entire function of $\theta \in \C$ when $w \not\in \{\tO^\ell,\tl^\ell\}$. Even if $w \in \{\tO^\ell,\tl^\ell\}$, the poles of $\gamma_{t,j}$ can only be of the form $\theta = \pm i \log 2 + 2k\pi, k \in \Z$. Hence, we also get a corollary concerning the moments of corresponding probability distributions.

\begin{corollary}
    For each $t \in \N$ and $j = 0,1, \ldots,2^{\ell-1}-1$  all moments of $\delta_{t,j}$ exist and are finite.
\end{corollary}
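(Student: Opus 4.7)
The plan is to invoke the analyticity information about $\gamma_{t,j}$ recorded in the remark immediately preceding the corollary, together with the classical fact that analyticity of a characteristic function in a complex neighborhood of the origin forces all moments of the underlying distribution to be finite. I would split the argument into two cases according to whether $w$ is one of the constant words $\tO^\ell, \tl^\ell$.

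When $w \notin \{\tO^\ell, \tl^\ell\}$, Proposition \ref{prop:arith_prog}(a) combined with the brief discussion at the start of Section \ref{subsec:recurrence} gives $\mathcal{D}_{t,j}(k) = \varnothing$ whenever $|k| > h(t)/2 + \ell$. For each fixed $t$ the support of $\delta_{t,j}$ is therefore finite, the moment sums $\sum_k |k|^n \delta_{t,j}(k)$ terminate, and this case is immediate.

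The substantive case is $w \in \{\tO^\ell, \tl^\ell\}$, where the support of $\delta_{t,j}$ need not be finite. Here the remark asserts that the only possible poles of $\gamma_{t,j}$ lie at $\theta = \pm i \log 2 + 2k\pi$, so $\gamma_{t,j}$ is holomorphic on the open strip $|\imagpart \theta| < \log 2$. Passing to the variable $z = \e(\theta)$, on which $\gamma_{t,j}$ genuinely depends thanks to $2\pi$-periodicity in $\theta$, this corresponds to holomorphy on the annulus $1/2 < |z| < 2$. By Laurent's theorem, the expansion $\sum_k \delta_{t,j}(k) z^k$ — whose coefficients are forced to coincide with the Fourier coefficients $\delta_{t,j}(k)$ already known from the real-axis representation — converges throughout this annulus. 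Substituting $z = e^{-s}$ with real $s \in (-\log 2, \log 2)$ yields $\sum_k \delta_{t,j}(k) e^{-sk} < \infty$, from which choosing $s = \pm r$ for any fixed $r \in (0, \log 2)$ produces the exponential tail bound $\delta_{t,j}(k) = O(e^{-r|k|})$ (with implicit constant depending on $t, j, r$). This immediately implies $\sum_k |k|^n \delta_{t,j}(k) < \infty$ for every $n \in \N$.

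I do not expect a serious obstacle. The main structural input — the explicit location of the possible poles of $\gamma_{t,j}$ — is already supplied by the determinant computation inside the proof of Proposition \ref{prop:char_fun_rec}, and it propagates to all $\Gamma_t$ via the recurrences \eqref{eq:Gamma_even}, \eqref{eq:Gamma_odd} because the coefficient matrices $A_t, B_t, C_t$ are trigonometric polynomials in $\theta$, hence entire. The only point requiring a touch of care is the passage from analytic extension of $\gamma_{t,j}$ in the strip to convergence of its series representation there; this does not follow from holomorphy at $0$ in isolation and genuinely exploits $2\pi$-periodicity through the Laurent expansion on $1/2 < |z| < 2$.
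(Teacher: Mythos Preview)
Your proposal is correct and follows essentially the same route as the paper: the paper does not give a separate proof but simply records, just before the corollary, that $\gamma_{t,j}$ is entire for $w\notin\{\tO^\ell,\tl^\ell\}$ and has its nearest poles at $\theta=\pm i\log 2+2k\pi$ otherwise, and asserts the corollary as an immediate consequence. Your case split and the Laurent-series argument in the $z=\e(\theta)$ variable are a clean way to make this implication explicit.
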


In the remainder of this section we give some remarks and properties of the matrices $A_t, B_t, C_t$. First, since $\varphi(t,n)$ only depends on the residue of $t$ modulo $2^{\ell}$, it is clear that so do the matrices $A_t, B_t, C_t$. 
Moreover, we will often need to consider these matrices evaluated at $\theta=0$ so for brevity we introduce the notation
$$A = A_t(0), \qquad B = B_t(0), \qquad C=C_t(0),$$
where each expression is independent of $t$.

We now prove two important lemmas. The first one concerns the shape of the products of $B_t,C_t$, which naturally occurs when iterating the recurrence relation in Proposition \ref{prop:char_fun_rec}.

\begin{lemma} \label{lem:snake_matrix_product}
 Let $h \geq 1$ and choose any $\varepsilon_0, \varepsilon_1, \ldots, \varepsilon_{h-1} \in \{\tO,\tl\}$. Put $\uptau(\tO) = B$ and $\uptau(\tl) = C$. Then the entry at position $(j,k)$ in the matrix product $\uptau(\varepsilon_0) \uptau(\varepsilon_1) \cdots \uptau(\varepsilon_{h-1})$ is
 $$\begin{cases}
    1/2^h &\text{if } j=(2^{h}k + [\varepsilon_{h-1} \cdots\varepsilon_0]_2) \bmod{2^{\ell-1}}, \\
    0  &\text{otherwise}.
\end{cases}$$ 
 In particular, if $h \geq \ell-1$ then the product has all entries $1/2^h$ in row number $[\varepsilon_{\ell-2} \cdots \varepsilon_1\varepsilon_0]_2$, and $0$ elsewhere.

Consequently, for $h \geq \ell-1$ the matrix $ A^{h}$ has all entries equal to $1/2^{\ell-1}$.
\end{lemma}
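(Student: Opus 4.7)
The plan is to reduce everything to a single-step calculation and then iterate. First, I would unpack the definition of $\uptau(\varepsilon)$ at $\theta=0$ and show the following clean reformulation: for any $\varepsilon\in\{\tO,\tl\}$,
\[
  \uptau(\varepsilon)_{j,k} \;=\; \begin{cases} 1/2 & \text{if } j \equiv 2k + \varepsilon \pmod{2^{\ell-1}}, \\ 0 & \text{otherwise.} \end{cases}
\]
This comes directly from the explicit structure of $A$: row $j$ has its two nonzero entries (each equal to $1/2$) in columns $\lfloor j/2\rfloor$ and $\lfloor j/2\rfloor + 2^{\ell-2}$, and these are precisely the two $k\in\{0,\dots,2^{\ell-1}-1\}$ satisfying $2k\equiv j-\varepsilon\pmod{2^{\ell-1}}$ when $j\equiv\varepsilon\pmod 2$. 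The restriction on the parity of $j$ picks out whether the row is retained by $B$ (even $j$, $\varepsilon=\tO$) or by $C$ (odd $j$, $\varepsilon=\tl$).

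Next, I would prove the main claim by induction on $h$. The base case $h=1$ is the reformulation above. For the inductive step, I expand
\[
 \bigl(\uptau(\varepsilon_0)\cdots\uptau(\varepsilon_{h})\bigr)_{j,k} \;=\; \sum_m \bigl(\uptau(\varepsilon_0)\cdots\uptau(\varepsilon_{h-1})\bigr)_{j,m}\,\uptau(\varepsilon_h)_{m,k}.
\]
By the inductive hypothesis the first factor vanishes unless $m\equiv 2^{-h}(j-[\varepsilon_{h-1}\cdots\varepsilon_0]_2)$ in the appropriate modular sense, and by the base case the second factor forces $m\equiv 2k+\varepsilon_h\pmod{2^{\ell-1}}$. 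Substituting one into the other yields the single surviving index $m$ and the congruence
\[
 j \equiv 2^{h+1}k + 2^h\varepsilon_h + [\varepsilon_{h-1}\cdots\varepsilon_0]_2 \;=\; 2^{h+1}k + [\varepsilon_h\varepsilon_{h-1}\cdots\varepsilon_0]_2 \pmod{2^{\ell-1}},
\]
with value $(1/2^h)(1/2)=1/2^{h+1}$, completing the induction.

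For the consequence $h\ge\ell-1$, it suffices to observe that $2^h k\equiv 0\pmod{2^{\ell-1}}$ for every $k$, so the congruence collapses to $j=[\varepsilon_{h-1}\cdots\varepsilon_0]_2\bmod 2^{\ell-1}=[\varepsilon_{\ell-2}\cdots\varepsilon_0]_2$, independent of $k$; thus the whole row is constant equal to $1/2^h$ and all other rows vanish. Finally, for $A^h$ with $h\ge\ell-1$, I would expand $A=B+C$ to write $A^h=\sum_{\varepsilon\in\{\tO,\tl\}^h}\uptau(\varepsilon_0)\cdots\uptau(\varepsilon_{h-1})$ and use the previous formula: for each fixed row $j\in\{0,\dots,2^{\ell-1}-1\}$, exactly those $2^{h-(\ell-1)}$ words $\varepsilon$ whose last $\ell-1$ letters encode $j$ contribute, each adding $1/2^h$ in every column, giving the uniform value $1/2^{\ell-1}$.

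No real obstacle is expected here; the content is essentially a careful translation between the combinatorial description of the matrix and modular arithmetic on the row/column indices. The only place requiring mild care is keeping track of bit order (since $j$'s least significant bit becomes the parity constraint, and successive multiplications shift further low bits of $j$ into fixed positions), but this is precisely what the compact modular statement $j\equiv 2^h k+[\varepsilon_{h-1}\cdots\varepsilon_0]_2\pmod{2^{\ell-1}}$ is designed to encode cleanly.
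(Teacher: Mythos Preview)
Your proposal is correct and follows essentially the same approach as the paper: the paper also proves the formula by induction on $h$, observing that $\uptau(\varepsilon_h)$ has exactly one nonzero entry in each column $k$, sitting in row $(2k+\varepsilon_h)\bmod 2^{\ell-1}$, and then reads off the combined congruence; the $A^h$ statement is likewise obtained by expanding $A^h=(B+C)^h$. Your initial reformulation of $\uptau(\varepsilon)_{j,k}$ via the congruence $j\equiv 2k+\varepsilon$ is a bit more explicit than the paper's terse ``by definition of $B$ and $C$'', but the argument is otherwise identical.
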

\begin{proof}
Let $u_{j,k}^{(h)}$ denote the entry at position $(j,k)$ in the product  $\uptau(\varepsilon_0) \uptau(\varepsilon_1) \cdots \uptau(\varepsilon_{h-1})$. 
The proof is by induction on $h$. For $h=1$ our claim is precisely the definition of $B$ and $C$. Now, assume that it holds for $h$ and consider multiplying  $\uptau(\varepsilon_0) \uptau(\varepsilon_1) \cdots \uptau(\varepsilon_{h-1})$ by $\uptau(\varepsilon_h)$. The matrix $\uptau(\varepsilon_h)$ has only one nonzero entry in column $k$, which lies in row $r=(2k+\varepsilon_h) \bmod{2^{\ell-1}}$ and equals $1/2$. Therefore, by the inductive assumption we get
$$  u_{j,k}^{(h+1)} = \frac{1}{2} u_{j,r}^{(h)} = \begin{cases}
    1/2^h &\text{if } j=(2^{h}(2k+\varepsilon_h) + [\varepsilon_{h-1} \cdots\varepsilon_0]_2) \bmod{2^{\ell-1}}, \\
    0  &\text{otherwise},
\end{cases}$$
which simplifies to the desired expression.

  The part concerning $A^h$ is obtained by expanding the product $A^h = (\uptau(0) +\uptau(1))^h$.
\end{proof}

The following lemma shows that for $k \geq 2\ell-2$ vectors $\Gamma_{2^k t}$ have all components equal to $\gamma_{2^kt}$.

\begin{lemma} \label{lem:char_fun_eq}
    For all $t \in \N$, $k \geq 2\ell-2$  and $j \in \{ 0,1, \ldots,2^{\ell-1}-1  \}$  we have $\gamma_{2^kt,j} = \gamma_{2^kt}. $
\end{lemma}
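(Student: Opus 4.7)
The plan is to iterate the even recurrence $\Gamma_{2s}(\theta) = A_{2s}(\theta)\Gamma_s(\theta)$ from Proposition \ref{prop:char_fun_rec} to write
\begin{equation*}
    \Gamma_{2^k t}(\theta) = A_{2^k t}(\theta)\cdot A_{2^{k-1} t}(\theta)\cdots A_{2t}(\theta)\cdot \Gamma_t(\theta),
\end{equation*}
and then to show that the leftmost factors in this product collapse into a matrix all of whose rows are identical, so that $\Gamma_{2^kt}(\theta)$ becomes a constant vector.

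The case $t = 0$ is immediate since $\Gamma_0 = \mathbf{1}$, so we may assume $t \geq 1$. The key observation is that $A_s(\theta)$ depends on $s$ only through $s \bmod 2^\ell$, and in the case $s\equiv 0\pmod{2^\ell}$ every $p_m$ equals $1$, so $A_s(\theta) = A$, the constant matrix independent of both $\theta$ and $t$. Since $2^\ell \mid 2^j t$ automatically for all $j\geq \ell$, the factors $A_{2^j t}(\theta)$ with $j=\ell, \ell+1, \ldots, k$ are all equal to $A$. Factoring them out on the left gives
\begin{equation*}
    \Gamma_{2^k t}(\theta) = A^{k-\ell+1}\cdot M(\theta)\cdot \Gamma_t(\theta),
\end{equation*}
where $M(\theta) = A_{2^{\ell-1}t}(\theta)\cdots A_{2t}(\theta)$ collects the remaining $\ell-1$ factors.

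At this point the conclusion follows from Lemma \ref{lem:snake_matrix_product}: since $k\geq 2\ell-2$ we have $k-\ell+1\geq \ell-1$, so $A^{k-\ell+1}$ has all entries equal to $1/2^{\ell-1}$. In particular its rows all coincide, and therefore $A^{k-\ell+1}\cdot M(\theta)\cdot \Gamma_t(\theta)$ is a vector whose components are all equal to a common value $v(\theta)$. Since $\gamma_{2^k t}(\theta)$ is by definition the average of the entries of $\Gamma_{2^k t}(\theta)$, that average is also $v(\theta)$, which forces $\gamma_{2^k t,j}(\theta) = v(\theta) = \gamma_{2^k t}(\theta)$ for every $j$. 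No serious obstacle is anticipated: the whole argument reduces to the trivial fact that $2^j t$ is divisible by $2^\ell$ once $j\geq \ell$, combined with the mixing property of sufficiently large powers of $A$ already established in Lemma \ref{lem:snake_matrix_product}.
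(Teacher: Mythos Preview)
Your argument is correct and essentially identical to the paper's proof: both iterate the even recurrence, use that $A_{2^j t}(\theta)=A$ once $j\geq \ell$, and then invoke Lemma~\ref{lem:snake_matrix_product} on $A^{k-\ell+1}$ with $k-\ell+1\geq \ell-1$ to conclude that $\Gamma_{2^kt}$ has all components equal. The only cosmetic difference is that the paper stops the iteration at $\Gamma_{2^{\ell-1}t}$ rather than continuing down to $\Gamma_t$ and packaging the extra factors into your $M(\theta)$.
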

\begin{proof}
By Proposition \ref{prop:char_fun_rec} we have $\Gamma_{2^{k}t} = A^{k-(\ell-1)}\Gamma_{2^{\ell-1}t}$. Since $k-(\ell-1) \geq \ell-1$, Lemma \ref{lem:snake_matrix_product} implies that all the components this vector are identical, and thus equal to their average $\gamma_{2^kt}$. 
\end{proof}

\section{First moments} \label{sec:first}

In this section we derive some important properties of the first moments $m_{t,j}$ of the distributions $\delta_{t,j}$, namely
$$ m_{t,j} = \sum_{k \in \Z} k \delta_{t,j}(k) = -i\gamma_{t,j}'(0).$$ We arrange them into column vectors
$$  M_t = \begin{bmatrix}
    m_{t,0} & m_{t,1} & \cdots & m_{t,2^{\ell-1}-1}
\end{bmatrix}^T. $$
By differentiating the relations for $\Gamma_t(\theta)$ in Proposition \ref{prop:char_fun_rec} with respect to $\theta$ and evaluating at $\theta=0$, we immediately get corresponding recurrence relations for $M_t$.


\begin{proposition} \label{prop:mean_recurrence}
For all $t \in \N$ we have
    \begin{align*}
    M_{2t} &= A M_{t} + U_{2t},  \\
    M_{2t+1} &= B M_{t}+CM_{t+1} +U_{2t+1}, 
\end{align*}
where
$$ U_t = - i A'_{t}(0) \mathbf{1}. $$
\end{proposition}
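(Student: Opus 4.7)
The plan is to obtain both recurrences by differentiating the identities of Proposition \ref{prop:char_fun_rec} with respect to $\theta$ and then evaluating at $\theta = 0$. Three facts turn the resulting identities into the claimed recursions for $M_t$:
\begin{enumerate}[label=(\roman*)]
    \item Each $\gamma_{t,j}$ is the characteristic function of a probability measure on $\Z$, so $\gamma_{t,j}(0) = 1$ and $\gamma'_{t,j}(0) = i m_{t,j}$; therefore $\Gamma_t(0) = \mathbf{1}$ and $M_t = -i \Gamma'_t(0)$ coordinate-wise.
    \item The values at $0$ of the coefficient matrices satisfy $A_{2t}(0) = A$, $B_{2t+1}(0) = B$, and $C_{2t+1}(0) = C$ (these are the definitions).
    \item By construction, $B_t + C_t = A_t$ identically in $\theta$ and $t$ (since $B_t$ keeps the even rows of $A_t$ and $C_t$ keeps the odd ones), so in particular $B'_{2t+1}(0) + C'_{2t+1}(0) = A'_{2t+1}(0)$.
\end{enumerate}

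For the even case I would apply the product rule to $\Gamma_{2t}(\theta) = A_{2t}(\theta) \Gamma_t(\theta)$ and evaluate at $\theta = 0$, using (i) and (ii), to get $\Gamma'_{2t}(0) = A'_{2t}(0)\mathbf{1} + A \Gamma'_t(0)$; multiplying by $-i$ yields $M_{2t} = U_{2t} + A M_t$. For the odd case the same argument applied to $\Gamma_{2t+1}(\theta) = B_{2t+1}(\theta)\Gamma_t(\theta) + C_{2t+1}(\theta)\Gamma_{t+1}(\theta)$ gives
\[
\Gamma'_{2t+1}(0) = \bigl(B'_{2t+1}(0) + C'_{2t+1}(0)\bigr)\mathbf{1} + B\Gamma'_t(0) + C \Gamma'_{t+1}(0),
\]
and (iii) collapses the bracketed term to $A'_{2t+1}(0)$. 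Multiplying by $-i$ delivers $M_{2t+1} = U_{2t+1} + B M_t + C M_{t+1}$, as required.

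There is essentially no obstacle: the argument is term-by-term differentiation of a finite sum of smooth matrix entries. The only point worth noting explicitly is that differentiation under the characteristic-function sum is legitimate, so that $\gamma'_{t,j}(0) = i m_{t,j}$ really does hold. This is ensured by the corollary immediately preceding this section, which states that all moments of $\delta_{t,j}$ are finite (and in the generic case $w \notin \{\tO^\ell, \tl^\ell\}$ it is even immediate, since by Proposition \ref{prop:arith_prog}(a) the distribution $\delta_{t,j}$ has finite support and $\gamma_{t,j}$ is a trigonometric polynomial). The entries of $A_t, B_t, C_t$ are themselves trigonometric monomials, so evaluating their derivatives at $0$ raises no issue.
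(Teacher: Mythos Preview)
Your proof is correct and follows exactly the same approach as the paper, which simply states that the recurrences are obtained by differentiating the relations of Proposition~\ref{prop:char_fun_rec} and evaluating at $\theta=0$. Your write-up is more detailed than the paper's one-line justification; in particular, you make explicit the identity $B'_{2t+1}(0)+C'_{2t+1}(0)=A'_{2t+1}(0)$ that yields the single vector $U_{2t+1}$.
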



Although these relations can be used to quickly compute the means numerically, for the purpose of proving our main result we will use other equivalent formulas. First, we provide an expression for $m_{t,j}$ in terms of a limit.

\begin{lemma} \label{lem:mean_limit}
    For all $t \in \N$ and $j=0,1,\ldots,2^{\ell-1}-1$ we have
    $$  m_{t,j} = \lim_{\lambda \to \infty}  \frac{1}{2^\lambda} \sum_{n=0}^{2^\lambda-1} d_t(2^{\ell-1}n+j). $$
\end{lemma}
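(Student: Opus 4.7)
The plan is to unfold the time average on the right-hand side as a weighted count of the level sets of $d_t$ and match it term by term with the defining series of $m_{t,j}$. Concretely, grouping $n \in [0, 2^\lambda)$ by the value $k = d_t(2^{\ell-1}n+j)$ yields
$$
    \frac{1}{2^\lambda} \sum_{n=0}^{2^\lambda-1} d_t(2^{\ell-1}n+j) = \sum_{k \in \Z} k \cdot \frac{|\mathcal{D}_{t,j}(k) \cap [0, 2^\lambda)|}{2^\lambda}.
$$
Since $\mathcal{D}_{t,j}(k)$ is a union of arithmetic progressions from $\mathcal{F}_t$ (as discussed in the paragraph opening Section \ref{subsec:recurrence}), the quotient on the right tends to $\delta_{t,j}(k)$ as $\lambda \to \infty$, so each summand converges to $k \delta_{t,j}(k)$, which sums to $m_{t,j}$ by definition. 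The crux is to justify exchanging limit and summation.

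I would split on whether $w \in \{\tO^\ell, \tl^\ell\}$, following the dichotomy in Proposition \ref{prop:arith_prog}. In the generic regime $w \notin \{\tO^\ell, \tl^\ell\}$, part (a) of that proposition forces $\mathcal{D}_{t,j}(k) = \varnothing$ whenever $|k| > h(t)/2 + \ell$, so the sum has only finitely many nonzero terms, uniformly in $\lambda$, and the interchange is immediate.

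In the degenerate case $w \in \{\tO^\ell, \tl^\ell\}$ the variable $d_t$ is unbounded and one has to produce a summable majorant for the sequence $|k| \cdot |\mathcal{D}_{t,j}(k) \cap [0, 2^\lambda)|/2^\lambda$ uniformly in $\lambda$. Here I would rely on the explicit decomposition in the proof of Proposition \ref{prop:arith_prog}(b): for $w = \tl^\ell$, say, an index $n$ can attain $d_t(n) = k$ with $|k|$ large only through the occurrence of a block $\tO \tl^s$ at a prescribed location of its binary expansion, with $s = |k| + O(\ell + h(t))$. Each such block yields an arithmetic progression whose intersection with $[0, 2^\lambda)$ has at most $O(2^{\lambda - |k|}) + O(1)$ elements; summing over the bounded number of surrounding digit patterns compatible with $d_t = k$ gives a uniform bound of the form $|\mathcal{D}_{t,j}(k) \cap [0, 2^\lambda)|/2^\lambda = O(2^{-|k|}) + O(2^{-\lambda})$, with implicit constants depending only on $t$ and $\ell$. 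The first summand supplies the summable majorant $O(|k| \cdot 2^{-|k|})$, while the second contributes $O(\lambda^2 \cdot 2^{-\lambda}) \to 0$ after summing over the $O(\lambda)$ admissible values of $k$; dominated convergence then finishes the argument. The case $w = \tO^\ell$ follows by the symmetry of Proposition \ref{prop:symmetry}.

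The main obstacle is exactly this last uniform tail estimate in the degenerate case: although the individual densities $\delta_{t,j}(k)$ decay geometrically, one must extract the analogous bound for every finite truncation $[0, 2^\lambda)$ without losing uniformity in $\lambda$. Combining the progression decomposition with a direct count of integers in $[0, 2^\lambda)$ whose binary expansion contains the required long run of $\tl$s (or $\tO$s) is what allows this.
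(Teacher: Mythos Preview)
Your argument is correct and coincides with the paper's proof in the case $w \notin \{\tO^\ell,\tl^\ell\}$. In the degenerate case $w \in \{\tO^\ell,\tl^\ell\}$ you take a genuinely different route. The paper exploits a sharper observation: for $|k| \leq \lambda/2$ (and $\lambda \geq 4h(t)$) every arithmetic progression composing $\mathcal{D}_{t,j}(k)$ has difference $\leq 2^{2h(t)+|k|} \leq 2^\lambda$, so the count $\#(\mathcal{D}_{t,j}(k)\cap[0,2^\lambda))$ equals $2^\lambda \delta_{t,j}(k)$ \emph{exactly}. Hence the truncated sum over $|k|\leq\lambda/2$ already matches $\sum_{|k|\leq\lambda/2} k\,\delta_{t,j}(k)$, and the remainder is bounded by $\lambda\sum_{|k|>\lambda/2}\delta_{t,j}(k)\leq 2\sum_{|k|>\lambda/2}|k|\,\delta_{t,j}(k)$, which tends to zero simply because the first moment is finite (a fact already established via the analyticity of $\gamma_{t,j}$ after Proposition~\ref{prop:char_fun_rec}). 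Your approach instead derives the explicit geometric bound $|\mathcal{D}_{t,j}(k)\cap[0,2^\lambda)|/2^\lambda = O(2^{-|k|}) + O(2^{-\lambda})$ directly from the progression structure and then combines a summable majorant with a separately estimated error term. This is more hands-on and self-contained---it does not rely on the characteristic-function corollary---at the cost of redoing some of the counting from the proof of Proposition~\ref{prop:arith_prog}(b). One small remark: Proposition~\ref{prop:symmetry} as stated relates the unconditional densities $\delta^w_t$ and $\delta^{\overline w}_t$, not the conditional $\delta^w_{t,j}$ or the finite averages appearing in the lemma, so reducing $w=\tO^\ell$ to $w=\tl^\ell$ via that proposition is not quite immediate; but your direct counting argument applies verbatim to $w=\tO^\ell$ anyway, so this is purely cosmetic.
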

\begin{proof}
We consider two cases, depending on $w$.
If $w \not\in\{\tO^\ell,\tl^\ell\}$, then by Proposition \ref{prop:set_rec}(a) only finitely many $\delta_{t,j}(k)$ are nonzero (for fixed $t$). It follows that
\begin{align*}
    m_{t,j} &= \sum_{k \in \Z} k \delta_{t,j}(k) = \sum_{k \in \Z} k \lim_{\lambda \to \infty} \frac{1}{2^{\lambda}} \#(\mathcal{D}_{t,j}(k) \cap [0,2^{\lambda})) \\
    &= \lim_{\lambda \to \infty}  \frac{1}{2^{\lambda}} \sum_{k \in \Z} \sum_{\substack{0 \leq n < 2^\lambda\\n\in \mathcal{D}_{t,j}(k)}} d_t(2^{\ell-1}n+j)  =\lim_{\lambda \to \infty} \frac{1}{2^{\lambda}} \sum_{n=0}^{2^\lambda-1} d_t(2^{\ell-1}n+j).
\end{align*}

Now, consider the case $w \in \{\tO^\ell,\tl^\ell\}$, where we need to be more delicate. Put $h = |(t)_2|$ and choose $\lambda \geq 4h$. By the discussion below the definition of $\mathcal{D}_{t,j}(k)$ in Subsection \ref{subsec:recurrence}, when $|k| \leq \lambda/2$, the set $\mathcal{D}_{t,j}(k)$ is a finite union of arithmetic progressions of the form $2^a \N + b$, where $a \leq 2h + |k| \leq \lambda$ and $0 \leq b < 2^a$.
Therefore, $\#\left(\mathcal{D}_{t,j}(k) \cap [0, 2^\lambda) \right)= 2^\lambda \delta_{t,j}(k)$, and we get
\begin{equation} \label{eq:truncated_mean}
     \sum_{|k| \leq \lambda/2} k \delta_{t,j}(k) = \frac{1}{2^{\lambda}} \sum_{|k| \leq \lambda/2} \sum_{\substack{0 \leq n < 2^\lambda\\n\in \mathcal{D}_{t,j}(k)}} d_t(2^{\ell-1}n+j) .
\end{equation}
Let $R_\lambda$ denote an analogous sum over the remaining $n < 2^\lambda$, namely
$$ R_\lambda
= \frac{1}{2^\lambda} \sum_{|k|>\lambda/2} \sum_{\substack{0 \leq n < 2^\lambda\\n\in \mathcal{D}_{t,j}(k)}} d_t(2^{\ell-1}n+j) .$$
As $\lambda \to \infty$, the left-hand side of \eqref{eq:truncated_mean} tends to $m_{t,j}$, and it suffices to prove  $R_\lambda \to 0$. But for any $n < 2^\lambda$ we have $d_t(2^{\ell-1}n+j) \leq |(2^{\ell-1}n+j+t)_2|-\ell+1 \leq \lambda$, so $R_\lambda$ can be bounded by
$$  |R_\lambda| \leq \frac{\lambda}{2^\lambda} \sum_{|k| >\lambda/2} \#(\mathcal{D}_{t,j}(k)\cap[0,2^\lambda)) = \lambda \left(1-\sum_{|k| \leq \lambda/2} \delta_{t,j}(k) \right) = \lambda \sum_{|k| > \lambda/2} \delta_{t,j}(k) \leq  2 \sum_{|k| > \lambda/2} |k| \delta_{t,j}, $$
which tends to $0$ as $\lambda \to \infty$. The result follows.
\end{proof}

We now give two further equivalent expressions for $m_{t,j}$ in terms of finite sums, which will be useful for proving various properties of these means.

\begin{lemma} \label{lem:mean_formula}
Let $t \in \N$ and $j \in \{0,1,\ldots,2^{\ell-1}\}$.
Let $x,y \in \{\tO,\tl\}^{\ell-1}$ be such that $j=[x]_2$ and $j+t \equiv [y]_2 \pmod{2^{\ell-1}}$. Then we have
\begin{equation} \label{eq:mean_formula1}
     m_{t,j} = \frac{1}{2^{\ell-1}}\sum_{u \in \{\tO,\tl\}^{\ell-1}}(|uy|_w-|ux|_w).
\end{equation}
Furthermore, for $v \in \{\tO,\tl\}^*$ let $\mathcal{P}(v)$ denote the set of nonempty prefixes of $v$ which are simultaneously suffixes of $w$. Then
\begin{equation}\label{eq:mean_formula2}
  m_{t,j} =  \frac{1}{2^{\ell-1}}\left(\sum_{p \in \mathcal{P}(y)} 2^{|p|-1}  - \sum_{p \in \mathcal{P}(x)} 2^{|p|-1} \right). \end{equation}


\end{lemma}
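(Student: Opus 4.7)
I would prove \eqref{eq:mean_formula1} using the averaging identity in Lemma~\ref{lem:mean_limit}, and then deduce \eqref{eq:mean_formula2} from \eqref{eq:mean_formula1} by a short combinatorial counting of $w$-occurrences that cross the junction of two words. The appeal is that the averaging kills all dependence on the high-order digits of $n$, so the limit collapses to a finite sum over short words.

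For \eqref{eq:mean_formula1}, I would partition $n < 2^\lambda$ by its lowest $\ell-1$ binary digits $u$, writing $n = 2^{\ell-1}n_1 + [u]_2$. Setting $q = \lfloor (j+t)/2^{\ell-1}\rfloor$ and $[u]_2 + q = 2^{\ell-1}q_1 + [u']_2$, a direct computation gives $2^{\ell-1}n + j = 2^{2(\ell-1)}n_1 + [ux]_2$ and $2^{\ell-1}n + j + t = 2^{2(\ell-1)}(n_1 + q_1) + [u'y]_2$, so
\[
d_t(2^{\ell-1}n+j) = N^w(2^{2(\ell-1)}(n_1+q_1) + [u'y]_2) - N^w(2^{2(\ell-1)}n_1 + [ux]_2).
\]
Summing over $n_1 < 2^{\lambda-\ell+1}$ and shifting the first sum's index by $q_1$ introduces only $O(q_1 \lambda)$ boundary terms (since $N^w$ is bounded by the number of binary digits of its argument), which is negligible after dividing by $2^{\lambda-\ell+1}$. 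Writing further $n_1 = 2^{\ell-1}n_2 + [V_2]_2$, Lemma~\ref{lem:no_carry} with buffer $V_2$ collapses the summand to $|V_2 u'y|_w - |V_2 ux|_w$, which depends only on $V_2 \in \{\tO,\tl\}^{\ell-1}$, so the limit over $n_1$ equals $\frac{1}{2^{\ell-1}}\sum_{V_2}(|V_2 u'y|_w - |V_2 ux|_w)$. Summing over $u$, and using that $u \mapsto u'$ is a bijection of $\{\tO,\tl\}^{\ell-1}$ (translation by $q$ modulo $2^{\ell-1}$), I relabel $u' \to u$ inside the $y$-term; a second application of Lemma~\ref{lem:no_carry}, this time with buffer $u$, reduces $|V_2 uy|_w - |V_2 ux|_w$ to $|uy|_w - |ux|_w$ (independent of $V_2$), and the $V_2$-sum cancels one factor of $2^{-(\ell-1)}$ to yield \eqref{eq:mean_formula1}.

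To derive \eqref{eq:mean_formula2} from \eqref{eq:mean_formula1}, it suffices to prove $\sum_{u \in \{\tO,\tl\}^{\ell-1}} |uv|_w = \sum_{p \in \mathcal{P}(v)} 2^{|p|-1}$ for each $v \in \{\tO,\tl\}^{\ell-1}$ and subtract the identities for $v=y$ and $v=x$. Since $|u|, |v| < \ell$, every $w$-occurrence in $uv$ must straddle the boundary; parametrizing by its starting position $s \in \{1,\ldots,\ell-1\}$, such an occurrence requires the length-$s$ prefix of $v$ to coincide with the length-$s$ suffix of $w$ (that is, $p \in \mathcal{P}(v)$ with $|p|=s$) and the length-$(\ell-s)$ suffix of $u$ to coincide with the length-$(\ell-s)$ prefix of $w$ (which fixes $\ell-s$ digits of $u$, leaving $2^{s-1}$ completions). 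Summing over $s$ and $u$ yields the claim.

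\textbf{Main obstacle.} There is no deep obstacle; the proof is a direct chain of manipulations. The key observation is that Lemma~\ref{lem:no_carry} is applied \emph{twice} with different buffers: first with buffer $V_2$ (the next block of $\ell-1$ bits of $n_1$ above $u$) to localize $d_t$ after the telescoping step, and then with buffer $u$ to eliminate the remaining dependence on $V_2$ so that only the fixed words $u,x,y$ survive. A minor side point is that the telescoping shift by $q_1$ produces boundary terms that must be absorbed into a vanishing error, but this is immediate from the logarithmic bound on $N^w$. The combinatorial step for \eqref{eq:mean_formula2} is then routine once boundary-crossing $w$-occurrences are indexed by their starting position inside $uv$.
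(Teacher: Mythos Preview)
Your proposal is correct and follows the same overall route as the paper (limit formula from Lemma~\ref{lem:mean_limit}, a telescoping shift, then Lemma~\ref{lem:no_carry}), but it is organised less efficiently. You decompose $n$ first, which forces the two tails to be $u'y$ and $ux$ with $u'\neq u$; this in turn requires a second decomposition $n_1=2^{\ell-1}n_2+[V_2]_2$, a first application of Lemma~\ref{lem:no_carry} with buffer $V_2$, a bijective relabeling $u'\to u$, and a second application with buffer $u$. The paper instead shifts $n$ by $k=\lfloor (j+t)/2^{\ell-1}\rfloor$ \emph{before} decomposing, so the two tails are immediately $y$ and $x$; writing $n=[vu]_2$ and using Lemma~\ref{lem:no_carry} once with buffer $u$ then gives \eqref{eq:mean_formula1} directly. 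Your derivation of \eqref{eq:mean_formula2} is essentially identical to the paper's double-counting identity \eqref{eq:double_counting}. One small omission: you do not mention the case $w=\tO^{\ell}$, where the modified definition \eqref{eq:d_modification} of $d_t$ contributes an extra term $\sum_n(|(2^{\ell-1}n+j)_2|-|(2^{\ell-1}n+j+t)_2|)$; the paper checks separately that this is $O(\lambda)$ and hence disappears in the limit.
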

\begin{proof}
We first prove that \eqref{eq:mean_formula1} is equal to the limit in Lemma \ref{lem:mean_limit}. Let $k \in \N$ be such that $2^{\ell-1}k \leq j+t < (k+1)2^{\ell-1}$. If $w \neq \tO^\ell$, we have
\begin{align}
\sum_{n=0}^{2^\lambda-1} d_t(2^{\ell-1}n+j)  &= \sum_{n=0}^{2^\lambda-1} \left(N(2^{\ell-1}(n+k)+(j+t \bmod{2^{\ell-1}})) - N(2^{\ell-1}n+j) \right) \nonumber \\
&= \sum_{n=0}^{2^\lambda-1}\left(N(2^{\ell-1}n+(j+t \bmod{2^{\ell-1}})) - N(2^{\ell-1}n+j)\right) + E_\lambda, \label{eq:limit_approx}
\end{align}
where
$$ E_\lambda = \sum_{n=2^\lambda}^{2^\lambda+k-1}N(2^{\ell-1}n+(j+t \bmod{2^{\ell-1}})) - \sum_{n=0}^{k-1}N(2^{\ell-1}n+(j+t \bmod{2^{\ell-1}}))  = O(\lambda).   $$
If $w=\tO^\ell$, then due to \eqref{eq:d_modification} we also need to add to \eqref{eq:limit_approx} the sum
\begin{align*}
    \sum_{n=0}^{2^\lambda-1} \left( |(2^{\ell-1}n+j)_2|-|(2^{\ell-1}n+j+t)_2| \right)  &=  \sum_{n=0}^{2^\lambda-1} \left(|(n)_2| -|(n+k)_2|\right) \\
    &= \sum_{n=0}^{k-1} |(n)_2| -\sum_{n=2^\lambda}^{2^\lambda+k-1} |(n)_2|  = O(\lambda).
\end{align*}  
 Let $\lambda \geq \ell-1$ and write $n=[vu]_2$  for some $u \in \{0,1\}^{\ell-1}$ and $v \in \{\tO,\tl\}^{\lambda-\ell+1}$. Then the sum in \eqref{eq:limit_approx} becomes
$$ \sum_{v \in \{0,1\}^{\lambda-\ell+1}} \sum_{u \in \{\tO,\tl\}^{\ell-1}}(|vuy|_w-|vux|_w) = 2^{\lambda-\ell+1} \sum_{u \in \{\tO,\tl\}^{\ell-1}}(|uy|_w-|ux|_w).  $$
Hence, after dividing \eqref{eq:limit_approx} by $2^\lambda$ and passing to the limit, we get \eqref{eq:mean_formula1}.
 
Formula \eqref{eq:mean_formula2} follows from the identity
\begin{equation} \label{eq:double_counting}
    \sum_{u \in \{\tO,\tl\}^{\ell-1}}|uz|_w = \sum_{p \in \mathcal{P}(z)} 2^{|p|-1},
\end{equation}
valid for $z \in \{0,1\}^{\ell-1}$, which in turn is obtained by counting the occurrences of $w$ in two ways.
    Indeed, fix $z$ and observe that $w$ can only appear in $uz$ at positions such that the overlap of $w$ and $z$ equals some $p \in \mathcal{P}(z)$. For given $p$ this happens precisely when $u$ has suffix $s$ such that $sp=w$, and there are $2^{|p|-1}$ such choices of $u$. Summing over all $p \in \mathcal{P}(z)$, we get the total number of occurrences of $w$ in the words $uz$, which is the left-hand side of \eqref{eq:double_counting}.      
\end{proof}

As a simple consequence, we obtain a few key properties of the means $m_{t,j}$.

\begin{proposition} \label{prop:mean_properties}
For all $t \in \N$ we have the following:
  \begin{enumerate}
\item[(a)] $\sum_{j=0}^{2^{\ell-1}-1}m_{t,j} = 0$;
\item[(b)] $M_t$ is periodic in $t$ with period $2^{\ell-1}$;
\item[(c)] $\| M_t \|_{\infty} \leq 1-\frac{1}{2^{\ell-1}}$.
\end{enumerate}
\end{proposition}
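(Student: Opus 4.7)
The plan is to derive all three properties directly from the two explicit formulas for $m_{t,j}$ given in Lemma \ref{lem:mean_formula}. Since $m_{t,j}$ depends on $t$ only through the word $y \in \{\tO,\tl\}^{\ell-1}$ with $[y]_2 \equiv j+t \pmod{2^{\ell-1}}$, everything reduces to elementary manipulations of finite sums indexed by length-$(\ell-1)$ binary words.

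For part (a), I would apply formula \eqref{eq:mean_formula1} and sum over $j$. As $j$ runs through $\{0,1,\ldots,2^{\ell-1}-1\}$, the word $x$ (with $[x]_2 = j$) runs bijectively through $\{\tO,\tl\}^{\ell-1}$; likewise the word $y$ (with $[y]_2 \equiv j+t\pmod{2^{\ell-1}}$) runs bijectively through the same set. Thus
\[
\sum_{j=0}^{2^{\ell-1}-1} m_{t,j} = \frac{1}{2^{\ell-1}} \sum_{u \in \{\tO,\tl\}^{\ell-1}} \left( \sum_{y \in \{\tO,\tl\}^{\ell-1}} |uy|_w - \sum_{x \in \{\tO,\tl\}^{\ell-1}} |ux|_w \right) = 0.
\]

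For part (b), note that in \eqref{eq:mean_formula1} the word $x$ depends only on $j$, while $y$ depends only on the residue $(j+t) \bmod 2^{\ell-1}$. Replacing $t$ by $t + 2^{\ell-1}$ leaves both $x$ and $y$ unchanged for every $j$, so $m_{t,j} = m_{t+2^{\ell-1},j}$, giving the claimed periodicity of $M_t$.

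For part (c), I would invoke formula \eqref{eq:mean_formula2}. For any $z \in \{\tO,\tl\}^{\ell-1}$, the nonempty prefixes of $z$ have pairwise distinct lengths drawn from $\{1,2,\ldots,\ell-1\}$, so
\[
0 \leq \sum_{p \in \mathcal{P}(z)} 2^{|p|-1} \leq \sum_{k=1}^{\ell-1} 2^{k-1} = 2^{\ell-1}-1.
\]
Applying this bound to both $\mathcal{P}(y)$ and $\mathcal{P}(x)$ in \eqref{eq:mean_formula2} yields
\[
|m_{t,j}| \leq \frac{1}{2^{\ell-1}}\bigl(2^{\ell-1}-1\bigr) = 1 - \frac{1}{2^{\ell-1}},
\]
uniformly in $j$ and $t$, which is the stated bound on $\|M_t\|_\infty$. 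There is no real obstacle here; the content of the proposition is essentially a repackaging of Lemma \ref{lem:mean_formula}, with the only care being the bijection argument in (a) and the observation in (c) that distinct nonempty prefixes of a length-$(\ell-1)$ word have distinct lengths.
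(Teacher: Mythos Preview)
Your proof is correct and follows essentially the same approach as the paper: all three parts are derived directly from Lemma \ref{lem:mean_formula}, with (a) using the bijection as $j$ ranges over residues, (b) using that $y$ depends only on $(j+t)\bmod 2^{\ell-1}$, and (c) using the obvious bound $0\le\sum_{p\in\mathcal P(z)}2^{|p|-1}\le 2^{\ell-1}-1$ in \eqref{eq:mean_formula2}. The paper's proof is identical in substance, merely terser in part (c).
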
 
\begin{proof}
    To prove (a), observe that as $j$ runs over $0,1,\ldots,2^{\ell-1}-1$, then both $x$ and $y$ (as denoted in Lemma \ref{lem:mean_formula}) run over $\{\tO,\tl\}^{\ell-1}$. Therefore,
    $$ \sum_{j=0}^{2^{\ell-1}-1}m_{t,j} = \frac{1}{2^{\ell-1}} \left(\sum_{y \in \{\tO,\tl\}^{\ell-1}}\sum_{u \in \{\tO,\tl\}^{\ell-1}} |uy|_w -  \sum_{x \in \{\tO,\tl\}^{\ell-1}}\sum_{u \in \{\tO,\tl\}^{\ell-1}} |ux|_w \right) = 0.$$
The same lemma also shows that for fixed $j$ the value of $m_{t,j}$ only depends on $t \bmod{2^{\ell-1}}$, which gives (b). Finally, part (c) follows immediately from \eqref{eq:mean_formula2}.
\end{proof}

In particular, from (a) we get an immediate corollary concerning the original distribution $\delta_t$.
 \begin{corollary} \label{cor:mean_0}
     For every $t\in\N$ the mean of $\delta_t$ is $0$.
 \end{corollary}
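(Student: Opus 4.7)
The proof will be essentially a one-line consequence of Proposition \ref{prop:mean_properties}(a), combined with the partition formula for $\delta_t$ in terms of the conditional distributions $\delta_{t,j}$.

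The plan is as follows. Fix $t \in \N$. Recall the identity
\[
\delta_t(k) = \frac{1}{2^{\ell-1}} \sum_{j=0}^{2^{\ell-1}-1} \delta_{t,j}(k),
\]
established in Subsection \ref{subsec:recurrence}. Multiplying by $k$ and summing over $k \in \Z$, I would formally obtain
\[
\sum_{k \in \Z} k\, \delta_t(k) = \frac{1}{2^{\ell-1}} \sum_{j=0}^{2^{\ell-1}-1} \sum_{k \in \Z} k\, \delta_{t,j}(k) = \frac{1}{2^{\ell-1}} \sum_{j=0}^{2^{\ell-1}-1} m_{t,j},
\]
which vanishes by Proposition \ref{prop:mean_properties}(a).

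The only point that requires a word of justification is the swap of summation in $j$ and $k$, which is immediate once we know that $\sum_{k\in \Z} |k| \delta_{t,j}(k) < \infty$ for each $j$. When $w \notin \{\tO^\ell, \tl^\ell\}$, this is trivial because by Proposition \ref{prop:arith_prog}(a) only finitely many $\delta_{t,j}(k)$ are nonzero; in the remaining case $w \in \{\tO^\ell, \tl^\ell\}$ the absolute convergence follows from the Corollary stated after Proposition \ref{prop:char_fun_rec}, which guarantees that all moments of $\delta_{t,j}$ exist and are finite. Since $\delta_t$ is a convex combination of the $\delta_{t,j}$, its first absolute moment is likewise finite, and the interchange is justified. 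There is no real obstacle here; the statement is essentially a reformulation of part (a) of Proposition \ref{prop:mean_properties} after averaging over the residue classes modulo $2^{\ell-1}$.
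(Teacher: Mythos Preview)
Your proof is correct and follows exactly the paper's approach: the corollary is stated as an immediate consequence of Proposition~\ref{prop:mean_properties}(a) via the averaging identity $\delta_t(k)=2^{-(\ell-1)}\sum_j \delta_{t,j}(k)$. Your extra care in justifying the interchange of summation (using Proposition~\ref{prop:arith_prog}(a) or the finiteness of moments) is a welcome elaboration on what the paper leaves implicit.
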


\section{Second moments} \label{sec:second}

In this section we derive some important properties of the variance $v_t$ of the distribution $\delta_t$. Corollary \ref{cor:mean_0} implies that $v_t$ is equal to the second moment of $\delta_t$:
$$v_t = \sum_{k \in \Z} k^2 \delta_t(k) = -\gamma_t''(0).$$ 
 Our main goal is to prove Proposition \ref{prop:variance_ineq}, and for this purpose we establish a recurrence relation for $v_t$. This in turn requires us to consider the second moments $v_{t,j}$ of the distributions $\delta_{t,j}$, which are related to $v_t$ by the equality
$$  v_t = \frac{1}{2^{\ell-1}} \sum_{j=0}^{2^{\ell-1}-1} v_{t,j}.$$

The secondary goal of this section, fulfilled in Proposition \ref{prop:v_close} below, is be to prove that for all $j,k \in \{0,1, \ldots, 2^{\ell-1}-1\}$ the differences $v_{t,j} - v_{t,k}$ are uniformly bounded in $t$. This will turn out to be a key property in the approximation of $\gamma_t$ by the characteristic function of a normal distribution.

Let us define column vectors
$$ V_t = \begin{bmatrix}
    v_{t,0} & v_{t,1} & \cdots & v_{t,2^{\ell-1}-1}
\end{bmatrix}^T.  $$
By taking the second derivative of the relations in Proposition \ref{prop:char_fun_rec} at $\theta=0$ and multiplying by $-1$, we get the following recurrence relations.
\begin{proposition} \label{prop:var_recurrence}
For all $t \in \N$ we have
    \begin{align}
    V_{2t} &= A V_t + Q_{2t}, \label{eq:v_recurrence1} \\
    V_{2t+1} &= B V_t+C V_{t+1} + Q_{2t+1}, \label{eq:v_recurrence2}
\end{align}
where 
\begin{align*}
    Q_{2t} &= - 2i A'_{2t}(0) M_{t}-A''_{2t}(0) \mathbf{1}, \\
    Q_{2t+1} &= -2i  (B'_{2t+1} (0)M_{t}+C'_{2t+1}(0)M_{t+1})-A''_{2t+1}(0)\mathbf{1}.
\end{align*}
In particular the vectors $Q_t$ are periodic in $t$ with period $2^{\ell}$.
\end{proposition}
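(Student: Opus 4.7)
The plan is to derive both identities by differentiating the characteristic function recurrences in Proposition \ref{prop:char_fun_rec} twice with respect to $\theta$ and evaluating at $\theta=0$. The three evaluation identities I need are $\Gamma_t(0) = \mathbf{1}$ (since each $\gamma_{t,j}$ is a characteristic function), $\Gamma_t'(0) = i M_t$ (the definition of $m_{t,j}$ via $-i \gamma_{t,j}'(0)$), and $\Gamma_t''(0) = -V_t$ (since $v_{t,j}$ is the second moment of $\delta_{t,j}$, so $v_{t,j} = -\gamma_{t,j}''(0)$). All three are already recorded earlier in the paper.

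For \eqref{eq:v_recurrence1}, I apply the Leibniz rule to $\Gamma_{2t}(\theta) = A_{2t}(\theta)\Gamma_t(\theta)$ to obtain
$$\Gamma_{2t}''(\theta) = A_{2t}''(\theta)\Gamma_t(\theta) + 2 A_{2t}'(\theta)\Gamma_t'(\theta) + A_{2t}(\theta)\Gamma_t''(\theta),$$
substitute $\theta=0$ together with $A_{2t}(0)=A$, and multiply through by $-1$; the last term becomes $A V_t$ and the remaining two terms collect into $Q_{2t}$ as stated. For \eqref{eq:v_recurrence2} the same manipulation applied to $\Gamma_{2t+1}(\theta) = B_{2t+1}(\theta)\Gamma_t(\theta) + C_{2t+1}(\theta)\Gamma_{t+1}(\theta)$ produces an expression that a priori involves both $B_{2t+1}''(0)\mathbf{1}$ and $C_{2t+1}''(0)\mathbf{1}$; these combine into the single term $A_{2t+1}''(0)\mathbf{1}$ because $B_t(\theta) + C_t(\theta) = A_t(\theta)$ holds identically in $\theta$ (by the very definition of $B_t,C_t$ as the row-splitting of $A_t$), hence the same relation passes to the second derivative.

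For the periodicity claim, I use that the matrices $A_t, B_t, C_t$ depend only on $t \bmod 2^\ell$ (noted in Section \ref{subsec:recurrence}), so the same is true of each of their derivatives at $\theta = 0$. Combined with Proposition \ref{prop:mean_properties}(b), which says $M_t$ is periodic in $t$ with period $2^{\ell-1}$, inspection of the two formulas shows that $Q_{2t}$ and $Q_{2t+1}$ each depend on $t$ only through $t \bmod 2^{\ell-1}$; equivalently, the sequence $(Q_n)_{n \in \N}$ has period $2^\ell$.

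The argument is mechanical once the three evaluation identities are in place, so I do not expect any substantive obstacle; the only care needed is in signs and in justifying the termwise differentiability of $\Gamma_t(\theta)$ at $\theta = 0$, which follows from the preceding corollary guaranteeing that all moments of $\delta_{t,j}$ are finite (so $\gamma_{t,j}$ is at least twice continuously differentiable at $0$).
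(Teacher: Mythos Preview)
Your proposal is correct and follows exactly the approach the paper takes: differentiate the relations in Proposition~\ref{prop:char_fun_rec} twice, evaluate at $\theta=0$, and multiply by $-1$; the paper states this in a single sentence without spelling out the Leibniz computation or the periodicity argument, so your write-up is in fact more detailed than the original.
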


From this, we immediately obtain relations for the variances $v_t$. 

\begin{corollary} \label{cor:v_formula}
For all $t \in \N$ we have
\begin{align*}
    v_{2t} &= v_t + q_{2t}, \\
    v_{2t+1} &= \frac{1}{2}(v_t  + v_{t+1}) + q_{2t+1},
\end{align*}
where
$$   q_t = \frac{1}{2^{\ell-1}} \mathbf{1}^T Q_t.$$
\end{corollary}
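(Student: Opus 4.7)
The plan is to derive the scalar recurrences by taking the average of the components of the vector recurrences in Proposition \ref{prop:var_recurrence}, i.e.\ by multiplying them on the left by $\frac{1}{2^{\ell-1}}\mathbf{1}^T$. Since by definition $v_t = \frac{1}{2^{\ell-1}} \mathbf{1}^T V_t$ and $q_t = \frac{1}{2^{\ell-1}} \mathbf{1}^T Q_t$, the corollary reduces to computing $\mathbf{1}^T A$, $\mathbf{1}^T B$, and $\mathbf{1}^T C$.

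The key observation is a column-sum property of the matrices $A$, $B$, $C$ (evaluated at $\theta=0$), which follows directly from their explicit form given before Proposition \ref{prop:char_fun_rec}. Indeed, inspecting the shape of $A$, each column contains exactly two nonzero entries, both equal to $1/2$: for a column $k$ with $0 \leq k < 2^{\ell-2}$ these sit in rows $2k$ and $2k+1$, while for $2^{\ell-2} \leq k < 2^{\ell-1}$ they sit in rows $2(k-2^{\ell-2})$ and $2(k-2^{\ell-2})+1$. Hence every column sums to $1$, giving $\mathbf{1}^T A = \mathbf{1}^T$. Moreover, in each column exactly one of the two nonzero entries lies in an even row and one in an odd row. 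Since $B$ (resp.\ $C$) is obtained from $A$ by zeroing out the odd (resp.\ even) rows, every column of $B$ and of $C$ contains exactly one entry equal to $1/2$, so $\mathbf{1}^T B = \mathbf{1}^T C = \tfrac12 \mathbf{1}^T$.

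Applying $\mathbf{1}^T$ to \eqref{eq:v_recurrence1} yields
$$ \mathbf{1}^T V_{2t} \;=\; \mathbf{1}^T A\,V_t + \mathbf{1}^T Q_{2t} \;=\; \mathbf{1}^T V_t + \mathbf{1}^T Q_{2t}, $$
and dividing by $2^{\ell-1}$ gives $v_{2t}=v_t+q_{2t}$. Similarly, applying $\mathbf{1}^T$ to \eqref{eq:v_recurrence2} and using the two half-sum identities gives
$$ \mathbf{1}^T V_{2t+1} \;=\; \tfrac12 \mathbf{1}^T V_t + \tfrac12 \mathbf{1}^T V_{t+1} + \mathbf{1}^T Q_{2t+1}, $$
which after dividing by $2^{\ell-1}$ becomes $v_{2t+1} = \tfrac12(v_t+v_{t+1})+q_{2t+1}$.

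There is no real obstacle here: the whole proof is a one-line consequence of the column-sum identities for $A$, $B$, $C$, which in turn are transparent from the explicit block structure displayed in Section~\ref{subsec:recurrence}. The only thing to be a little careful about is to verify these column sums directly from the positions of the $p_m$'s, since the recurrence itself already records that each row of $A$ contains exactly two entries of modulus $1/2$, but we instead need the statement about columns.
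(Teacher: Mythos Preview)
Your proof is correct and follows exactly the approach the paper intends: the corollary is stated as an immediate consequence of Proposition~\ref{prop:var_recurrence}, and averaging the vector recurrences via $\frac{1}{2^{\ell-1}}\mathbf{1}^T$ together with the column-sum identities $\mathbf{1}^T A = \mathbf{1}^T$, $\mathbf{1}^T B = \mathbf{1}^T C = \tfrac12\mathbf{1}^T$ is precisely how one makes this immediate. Your explicit verification of these column sums from the displayed form of $A_t$ is the right justification and matches the structure described in Section~\ref{subsec:recurrence}.
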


The biggest obstacle towards proving Proposition \ref{prop:variance_ineq}, more precisely the $v_t \gg |t|_{01}$ part, is the fact that the values $q_t$ may not be strictly positive.  In the next lemma we show that, apart from some exceptional cases, we have $q_{t} \geq 1/2^{\ell+1}$, which will be sufficient for our purposes. The proof of this fact is rather technical and relies on a detailed case analysis. We also give an explicit upper bound for $q_t$.

\begin{lemma} \label{lem:q_bounds}
    For all $t \in \N$ we have
    $$ q_t < \frac{3}{2^{\ell-1}}.  $$
    Moreover, the inequality
    $$  q_{t} \geq \frac{1}{2^{\ell+1}} $$
    holds unless one of the following cases occurs:
    \begin{enumerate}[label=(\roman*)]
        \item $t \equiv 0 \pmod{2^{\ell}}$, where $q_t = 0$;
        \item $w \in \{\tO\tl^{\ell-1}, \tl\tO^{\ell-1}\}$ and $t \equiv 2^{\ell-1} \pmod{2^{\ell}},$ where
        $$ q_{t} = \frac{1}{2^{\ell-1}}\left(\frac{1}{2^{\ell-2}}-1\right); $$
        \item $w \in \{\tO\tl^{\ell-1}, \tl\tO^{\ell-1}\}$ and $t \equiv 2^{\ell-1} \pm 1 \pmod{2^{\ell}},$ where
        $$  q_t = \frac{1}{2^{2\ell-2}}; $$
        \item  $w \in \{\tO\tO\tl^{\ell-2},\tO\tl\tO^{\ell-2},\tl\tO\tl^{\ell-2},\tl\tl\tO^{\ell-2}\}$ and $t \equiv \pm 2^{\ell-2} \pmod{2^{\ell}}$, where
        $$  q_{t} \geq \frac{1}{2^{2\ell-2}}.$$
    \end{enumerate}    
\end{lemma}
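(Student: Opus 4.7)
The plan is to reduce $q_t = \frac{1}{2^{\ell-1}} \mathbf{1}^T Q_t$ to a closed-form expression via the extreme sparsity of the derivative matrices. For $t \not\equiv 0 \pmod{2^\ell}$, each of $A'_t(0), A''_t(0), B'_t(0), C'_t(0)$ has exactly two nonzero entries, located at the two extended row indices $n \in \{0,\ldots,2^\ell-1\}$ with $\varphi(t,n) = \pm 1$. Summing columns of $A_t(\theta)$ produces the row-vector entry $\frac{1}{2}(\e(\varphi(t,2k)\theta) + \e(\varphi(t,2k+1)\theta))$ at column $k$, and splitting into even/odd rows gives the analogous sums for $B_t, C_t$. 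Combining with Proposition \ref{prop:var_recurrence} and writing $w' = w_{\ell-1}\cdots w_1$, this yields
$$\mathbf{1}^T Q_{2t} = m_{t, ([w']_2 - t) \bmod 2^{\ell-1}} - m_{t, [w']_2 \bmod 2^{\ell-1}} + 1,$$
together with a parity-split analogue for $\mathbf{1}^T Q_{2t+1}$ that mixes one term from $M_t$ with one from $M_{t+1}$, the split governed by $w_0$. The upper bound $q_t < 3/2^{\ell-1}$ is then immediate from Proposition \ref{prop:mean_properties}(c): $|m_{t,j}| < 1$ gives $|\mathbf{1}^T Q_t| < 3$. Case (i) of the lemma is also direct: when $t \equiv 0 \pmod{2^\ell}$, $\varphi$ vanishes identically and $Q_t = 0$.

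For the lower bound, I would substitute the overlap-sum form of Lemma \ref{lem:mean_formula} for each $m_{t,j}$. Setting $\Sigma(v) := \sum_{p \in \mathcal{P}(v)} 2^{|p|-1} \in \{0,1,\ldots,2^{\ell-1}-1\}$, a routine calculation gives
$$\mathbf{1}^T Q_{2t} = \frac{1}{2^{\ell-1}}\bigl(2\Sigma(w') - \Sigma(x_1) - \Sigma(y_2)\bigr) + 1,$$
where $x_1, y_2 \in \{\tO,\tl\}^{\ell-1}$ have $[x_1]_2 = ([w']_2 - t) \bmod 2^{\ell-1}$ and $[y_2]_2 = ([w']_2 + t) \bmod 2^{\ell-1}$. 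Since $\mathbf{1}^T Q_t$ is a rational with denominator $2^{\ell-1}$, the key question becomes: when can $\Sigma(x_1) + \Sigma(y_2) - 2\Sigma(w')$ get large enough to swamp the constant $+1$? A short combinatorial check shows that $\Sigma(v) = 2^{\ell-1}-1$ forces $v = w_0^{\ell-1}$ together with $w_0 = w_1 = \cdots = w_{\ell-2}$, so $w \in \{\tO^\ell,\tl^\ell,\tO\tl^{\ell-1},\tl\tO^{\ell-1}\}$.

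From this classification one reads off the exceptional cases. For $w \in \{\tO^\ell,\tl^\ell\}$ we have $\Sigma(w') = 2^{\ell-1}-1$ as well, so $2\Sigma(w') - \Sigma(x_1) - \Sigma(y_2) \geq 0$ and $q_t \geq 1/2^{\ell-1}$. Only the shapes $\tO\tl^{\ell-1}, \tl\tO^{\ell-1}$ combine $\Sigma(w') = 0$ with possibly large $\Sigma(x_1), \Sigma(y_2)$; matching the algebraic constraint $[y_2]_2 - [x_1]_2 \equiv 2t \pmod{2^{\ell-1}}$ then pins $t \bmod 2^\ell$ to $2^{\ell-1}$ (case (ii)) or $2^{\ell-1}\pm 1$ (case (iii), arising from the odd-case formula). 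Case (iv) corresponds to the ``second-best'' overlap pattern, in which $v = \tl^{\ell-1}$ attains $\Sigma(v) = 2^{\ell-2} - 1$ against the four words $\tO\tO\tl^{\ell-2}, \tO\tl\tO^{\ell-2}$ and their bitwise complements, paired with $t \equiv \pm 2^{\ell-2}$. For each of (ii)--(iv) the stated value or bound on $q_t$ would be confirmed by a direct substitution into Lemma \ref{lem:mean_formula}.

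The main obstacle is managing this multi-case analysis: to show that no pair $(w, t \bmod 2^\ell)$ outside (i)--(iv) can produce $\mathbf{1}^T Q_t < 1/4$, one must verify that whenever $\Sigma(w')$ is small, simultaneously attaining large values of both $\Sigma(x_1)$ and $\Sigma(y_2)$ already forces $w$ and $t$ into the listed exceptions. I would organize this by the parity of $t$ and the pair $(w_0, w_{\ell-1})$, write down $x_1$ and $y_2$ (and their odd-case analogues involving $M_{t+1}$) explicitly in each branch, and conclude by elementary case-checking that in all non-exceptional branches $2\Sigma(w') - \Sigma(x_1) - \Sigma(y_2) \geq -3\cdot 2^{\ell-3}$, which is equivalent to the desired $\mathbf{1}^T Q_t \geq 1/4$.
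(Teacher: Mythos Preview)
Your setup matches the paper's exactly: both write $2^{\ell-1}q_t = 1 + m_{r,W'-r} - m_{r,W'}$ (odd case analogous), deduce the upper bound from Proposition~\ref{prop:mean_properties}(c), and then substitute the overlap sum $\Sigma$ from Lemma~\ref{lem:mean_formula}. So the skeleton is right, and case~(i) is fine.

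The gap is in how you organize the lower-bound analysis. The bad event is $\Sigma(x_1)+\Sigma(y_2)-2\Sigma(w')>3\cdot 2^{\ell-3}$, hence $\Sigma(x_1)+\Sigma(y_2)>3\cdot 2^{\ell-3}$. Since each $\Sigma(v)$ is a sum of \emph{distinct} powers of $2$ below $2^{\ell-1}$, this forces \emph{either} one of $\Sigma(x_1),\Sigma(y_2)$ to contain the summand $2^{\ell-2}$ (so $x_1$ or $y_2$ is the length-$(\ell-1)$ suffix of $w$), \emph{or} both to contain $2^{\ell-3}$. Your plan covers only the first scenario and a loosely described ``second-best'' refinement; the second scenario is not detected by looking at maximal $\Sigma$ values, nor by your $(w_0,w_{\ell-1})$ split. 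That scenario forces $r\equiv 0$ or $\pm 2^{\ell-2}\pmod{2^{\ell-1}}$ together with near-periodic $w$ such as $(\tl\tO)^{\ell/2}$, $\tO\tO(\tl\tO)^{\ell/2-1}$, $\tl(\tl\tO)^{(\ell-1)/2}$, and one must check each to obtain $\Sigma(x_1)+\Sigma(y_2)\leq (2^\ell-2)/3<3\cdot2^{\ell-3}$. Your description of case~(iv) is also off: for $w=\tO\tl\tO^{\ell-2}$ one has $\Sigma(\tl^{\ell-1})=0$, not $2^{\ell-2}-1$; the pair that actually realizes the bound is $x_r=\tl\tO^{\ell-2}$ (the suffix of $w$, with $\Sigma=2^{\ell-2}$) and $x_{-r}=\tO^{\ell-1}$ (with $\Sigma=2^{\ell-2}-1$), which still lives in the ``one term has $2^{\ell-2}$'' branch. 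The paper splits precisely along ``which power of $2$ each $\Sigma$ must contain'', handling both branches uniformly; it also first reduces to $w$ ending in $\tO$ via Proposition~\ref{prop:symmetry} and disposes of $\ell=2$ by direct computation.
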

\begin{proof}
By Proposition \ref{prop:symmetry}, the second moments corresponding to $w$ and its binary negation $\overline{w}$ satisfy $v^w_t = v^{\overline{w}}_t$ for each $t \in \N$, so we may assume without loss of generality that $w$ ends with a $\tO$. Furthermore, for $\ell=2$ we can verify that our claim holds by direct computation, so we only need to consider $\ell \geq 3$. In order to simplify the notation, throughout the proof we periodically extend the definition of $m_{t,j}$ to negative integers $t, j$, with period $2^{\ell-1}$ in both cases.

 To begin, we write $q_t$ more explicitly. If $t \equiv 0 \pmod{2^{\ell}}$, then $Q_t$ is the zero vector and we reach case (i). Hence, let $t \not \equiv 0 \pmod{2^{\ell}}$. Put $r = \fl{t/2}$ as well as $W = [w]_2$ (which is even) and $W' = W/2$. The matrix $A_t(\theta)$ has precisely two nonconstant entries: $\e(-\theta)/2$ at position $(W ,W') \bmod{2^{\ell-1}}$, and $\e(\theta)/2$ at position $(W-t,W'+\fl{-t/2}) \bmod{2^{\ell-1}}$. By the definition of $Q_t$ we thus obtain
$$
    2^{\ell-1} q_t = \mathbf{1}^T Q_t = 1+ \begin{cases}
        m_{r,W'-r} - m_{r,W'} &\text{if } t=2r, \\
        m_{r+1,W'-r-1}- m_{r,W'} &\text{if } t=2r+1.
    \end{cases}
$$
Note that in the case $t=2r+1$ we use the assumption that $w$ ends with a $0$ to deduce that $B_t(\theta)$ contains $\e(-\theta)/2$ and $C_t(\theta)$ contains $\e(\theta)/2$.
The upper bound for $q_t$ now follows from Proposition \ref{prop:mean_properties}(c).

We move on to the inequality $q_t \geq 1/2^{\ell+1}$, where we need to estimate the means more precisely. We consider two main cases, depending on the parity of $t$.

\textbf{Case I:} $t=2r$. \\
If $2^{\ell-1} q_t \geq 1/4$, then we are done. Otherwise, we get
   \begin{equation} \label{eq:mean_ineq}
     m_{r,W'}  -  m_{r,W'-r} > \frac{3}{4}.
   \end{equation} 
To simplify the notation, for $x \in \{\tO,\tl\}^{\ell-1}$ we define 
$$ f(x) = \sum_{u \in \{\tO,\tl\}^{\ell-1}} |ux|_w = \sum_{p\in\mathcal{P}(x)} 2^{|p|-1}, $$
as in identity \eqref{eq:double_counting},
where again $\mathcal{P}(x)$ denotes the set of nonempty prefixes of $x$, which are suffixes of $w$. 
For $n \in \Z$ we also let $x_n \in \{\tO,\tl\}^{\ell-1}$ be such that $[x_n]_2 \equiv W'+n \pmod{2^{\ell-1}}$. By Lemma \ref{lem:mean_formula} we get 
   \begin{align}  
      2^{\ell-1}(m_{r,W'}-m_{r,W'-r}) &= f(x_r) + f(x_{-r}) - 2f(x_0) \nonumber \\  
        &\leq f(x_r) + f(x_{-r}) =  \sum_{p \in \mathcal{P}(x_r)} 2^{|p|-1} + \sum_{p \in \mathcal{P}(x_{-r})} 2^{|p|-1}. \label{eq:word_sum}
   \end{align}
    If \eqref{eq:mean_ineq} holds, then $f(x_r) + f(x_{-r}) > 3\cdot 2^{\ell-3}$ so at least one of the sums must contain $2^{\ell-2}$, or both must contain $2^{\ell-3}$. We consider these (not mutually exclusive) subcases separately.

    \textbf{Subcase Ia:} one of $f(x_r), f(x_{-r})$ contains $2^{\ell-2}$.\\    
    This condition holds if and only if at least one of $x_r, x_{-r}$ is the suffix of $w$ of length $\ell-1$. Since the sign of $r$ does not affect $f(x_r) + f(x_{-r})$, we can assume that this is happens for $x_r$. As a consequence, we get $r \equiv W' \pmod{2^{\ell-1}}$, and thus $x_{-r} = \tO^{\ell-1}$.
   For such $r$ we estimate \eqref{eq:word_sum} from above, depending on the number of zeros at the end of $w$. 
   
   If $w = \tO^{\ell}$, then $r \equiv 0 \pmod{2^{\ell-1}}$ so $t \equiv 0 \pmod{2^{\ell}}$, a case which has already been covered. If $w = \tl\tO^{\ell-1}$, then $x_r = x_{-r} = \tO^{\ell-1}$ and $x_0=\tl\tO^{\ell-2}$. Then it is simple to compute that $f(x_r) = f(x_{-r}) = 2^{\ell-1}-1$ and $f(x_0)=0$, which leads to  
   case (ii). 
   Otherwise, write $w=\varepsilon y \tl\tO^s$, where $\varepsilon \in \{\tO,\tl\}$, $1 \leq s \leq \ell-2$ and $y \in\{\tO,\tl\}^{\ell-s-2}$ ($y$ may be the empty word). Then we get $x_r = y\tl\tO^s$ and $x_{-r}=\tO^{\ell-1}$, and therefore
   \begin{align*}
       \mathcal{P}(x_r) &= \{y\tl\tO^s\} \cup \mathcal{P}(y), \\
       \mathcal{P}(x_{-r}) &= \{\tO^h: 1 \leq h \leq s\}.
   \end{align*}
   We thus obtain
   $$f(x_{r}) + f(x_{-r}) \leq 2^{\ell-2} + \sum_{j=1}^{\ell-s-2} 2^{j-1} + \sum_{h=1}^s 2^{h-1} = 2^{\ell-2} + 2^{\ell-s-2} + 2^{s} - 2.$$
   Let us denote the expression of the right-hand side by $g(s)$. By elementary calculus, the maximal value of $g(s)$ for $s \in \{1,2,\ldots,\ell-2\}$ is attained when $s=\ell-2$, namely $w= \varepsilon \tl\tO^{\ell-2}$ for some $\varepsilon \in \{\tO,\tl\}$. We have $g(\ell-2)=2^{\ell-1}-1$, which leads to the inequality in the case (iv). To bound the same expression for other $w$ (when $\ell \geq 4$), we compute the second largest value of $g(s)$, which occurs for $s=\ell-3$ and $s=1$, and equals $3 \cdot 2^{\ell-3}$. But this is a contradiction with \eqref{eq:mean_ineq}.

\textbf{Subcase Ib:} both $f(x_{r}),f(x_{-r})$ contain $2^{\ell-3}$. \\
This condition holds if and only if the suffix of $w$ of length $\ell-2$ is a common prefix of $x_r$ and $x_{-r}$. This is equivalent to
$$  \fl{\frac{W'-r}{2}} \equiv  W  \equiv  \fl{\frac{W'+r}{2}} \pmod{2^{\ell-2}}.$$
From the congruence between the outer terms we get $r \equiv 0 \pmod{2^{\ell-2}}$, and further $r \equiv 2^{\ell-2} \pmod{2^{\ell-1}}$, since $2r = t \not \equiv 0 \pmod{2^{\ell}}$. Substituting this into the congruence, we obtain
$$  W + 2^{\ell-3} \equiv \fl{\frac{W}{4}} \pmod{2^{\ell-2}}. $$
Hence, $w$ must equal either $\tl\tO^{\ell-1}$ or one of the words  $\tO\tO(\tl\tO)^{\ell/2-1}, \tl(\tl\tO)^{(\ell-1)/2}$, depending on the parity of $\ell$. If $w = \tl\tO^{\ell-1}$, we get $r \equiv W' \pmod{2^{\ell-1}}$, which has already been considered in Subcase Ia. If $\ell$ is even and $w = \tO\tO(\tl\tO)^{\ell/2-1}$, then we get $x_r = x_{-r} = (\tl\tO)^{\ell/2-1}\tl$ and thus
$$ f(x_r) + f(x_{-r}) = 2 \sum_{h=1}^{\ell/2-1} 2^{2h-1} = \frac{2^{\ell}-4}{3}.$$
Similarly, if $\ell$ is odd and $w = \tl(\tl\tO)^{(\ell-1)/2}$, then $x_r = x_{-r} = (\tO\tl)^{(\ell-1)/2}$ and
$$ f(x_r) + f(x_{-r}) = 2 \sum_{h=1}^{(\ell-1)/2} 2^{2h-2} = \frac{2^{\ell}-2}{3}.$$
In both cases we have $f(x_r) + f(x_{-r}) < 3 \cdot 2^{\ell-3},$ thus a contradiction.

\textbf{Case II:} $t = 2r+1$. \\
The general reasoning is similar as in the previous case so we omit some details. Again, we assume that $2^{\ell-1} q_t < 1/4$, which is equivalent to
\begin{equation} \label{eq:f_inequality}
    3 \cdot 2^{\ell-3} <  2^{\ell-1}(m_{r,W'} - m_{r+1,W'-r-1}) = f(x_r)  +f(x_{-r-1})-2f(x_0)
\end{equation} 
and so we must have $f(x_r)  +f(x_{-r-1}) > 3 \cdot 2^{\ell-3}$.

  \textbf{Subcase IIa:} one of $f(x_r),  f(x_{-r-1})$ contains $2^{\ell-2}$.\\  
Again, one of $x_r, x_{-r-1}$ must be a suffix of $w$ and in either case the value of $f(x_r) + f(x_{-r-1})$ is the same. Hence, assume that $r \equiv W' \pmod{2^{\ell-1}}$, which means that $x_{-r-1} = \tl^{\ell-1}$, and thus $f(x_{-r-1}) = 0$ (recall that $w$ ends with a $\tO$).
Consider the number of zeros at the end of $w$.  
If $w = \tO^{\ell}$, then $x_r = x_0 =  \tO^{\ell-1}$, and the right-hand side of \eqref{eq:f_inequality} equals $1-2^{\ell-1}$, a contradiction. If $w=\tl\tO^{\ell-1}$, then  $x_0 = \tl\tO^{\ell-2}, x_r = \tO^{\ell-1}$, and the right-hand side of \eqref{eq:f_inequality} is $2^{\ell-1}-1$. This leads to $q_t = 1/2^{2\ell-2}$, where $t \equiv 2W'+1 \equiv 2^{\ell-1}+1 \pmod{2^\ell}$, which is case (iii).  Otherwise, $w$ ends with precisely $s$ zeros, where $1 \leq s \leq \ell-2$, and by similar calculations as before we have
$$ f(x_r) + f(x_{-r-1}) \leq \sum_{p \in \mathcal{P}(x_r)} 2^{|p|-1} \leq 2^{\ell-2} + 2^{\ell-3} - 1 < 3 \cdot 2^{\ell-3}, $$
a contradiction.

\textbf{Subcase IIb:} both $f(x_{r}),f(x_{-r-1})$ contain $2^{\ell-3}$. \\
The suffix of $w$ of length $\ell-2$ is a common prefix of $x_r$ and $x_{-r-1}$, and thus
$$  \fl{\frac{W'-r-1}{2}} \equiv  W  \equiv  \fl{\frac{W'+r}{2}} \pmod{2^{\ell-2}}.$$
Hence, $W'$ must be odd and either $r \equiv 0 \pmod{2^{\ell-2}}$ or $r \equiv -1 \pmod{2^{\ell-2}}$. We now consider the possible values of $r \bmod{2^{\ell-1}}$.

If $r \equiv 0 \pmod{2^{\ell-1}}$, then $w = (\tl\tO)^{\ell/2}$ or $w=\tO(\tl\tO)^{(\ell-1)/2}$, depending on the parity of $\ell$. Then $x_r = \tl(\tO\tl)^{\ell/2-1}$ or $x_r = (\tO\tl)^{(\ell-1)/2}$, respectively, and $x_{-r-1}$ differs from $x_r$ only in the final digit. Then the same computations as in the case $t=2r$ yield
$$ f(x_r) + f(x_{-r-1}) =  \frac{1}{3} \cdot\begin{cases}
   2^{\ell}-4&\text{if } \ell \text{ is even}, \\
    2^{\ell}-2 &\text{if } \ell \text{ is odd},
\end{cases}  $$
which is $< 3 \cdot 2^{\ell-3}$ both cases, thus a contradiction.

If $r \equiv 2^{\ell-2} \pmod{2^{\ell-1}}$, then only the first digit of $w$ is changed compared to the previous case, namely $w = \tO\tO(\tl\tO)^{\ell/2-1}$ or $w=\tl(\tl\tO)^{(\ell-1)/2}$. This yields precisely the same $x_r$ and $x_{-r-1}$, and thus the same value $f(x_r) + f(x_{-r-1})$. 

Finally, if $r \equiv -1 \pmod{2^{\ell-1}}$ or $r \equiv 2^{\ell-2}-1 \pmod{2^{\ell-1}}$, we obtain the same possibilities for $w$ and the set $\{x_r, x_{-r-1}\}$ as before, and thus the conclusion remains the same. 
\end{proof}

We state another auxiliary lemma which will be helpful in proving both main results of this section.

\begin{lemma} \label{lem:v_diff}
    For all $t \in \N$ we have
    $$ |v_{t+1} - v_t| \leq  \frac{1}{2^{\ell-4}}.$$
\end{lemma}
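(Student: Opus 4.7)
The plan is to prove the stronger bound $|v_{t+1}-v_t| \leq 1/2^{\ell-3}$ by strong induction on $t$, using the recurrences in Corollary~\ref{cor:v_formula} together with a sharpened form of the estimates from the proof of Lemma~\ref{lem:q_bounds}. By the symmetry $v^w_t = v^{\overline{w}}_t$ supplied by Proposition~\ref{prop:symmetry} I may assume without loss of generality that $w$ ends with $\tO$. Subtracting consecutive entries of the recurrences in Corollary~\ref{cor:v_formula} yields, for every $t\in\N$,
\[
v_{2t+1}-v_{2t} = \tfrac{1}{2}(v_{t+1}-v_t) + (q_{2t+1}-q_{2t}), \qquad v_{2t+2}-v_{2t+1} = \tfrac{1}{2}(v_{t+1}-v_t) + (q_{2t+2}-q_{2t+1}),
\]
which reduces the problem to bounding $|q_{s+1}-q_s|$ for arbitrary $s\in\N$.

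The main obstacle is that the pointwise estimate $|q_t|<3/2^{\ell-1}$ coming from Lemma~\ref{lem:q_bounds} alone only yields $|q_{s+1}-q_s|<6/2^{\ell-1}$, which is too weak to close the induction at the target level. Instead I would exploit the explicit identities recorded inside the proof of Lemma~\ref{lem:q_bounds}: with $W=[w]_2$ and $W'=W/2$, for $t \not\equiv 0 \pmod{2^\ell}$ one has
\[
2^{\ell-1}q_{2r} = 1 + m_{r,W'-r} - m_{r,W'}, \qquad 2^{\ell-1}q_{2r+1} = 1 + m_{r+1,W'-r-1} - m_{r,W'},
\]
while $q_t=0$ when $t \equiv 0 \pmod{2^\ell}$. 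When neither $s$ nor $s+1$ is divisible by $2^\ell$, the $+1$ together with the common term $m_{r,W'}$ cancel in the difference, so $2^{\ell-1}(q_{s+1}-q_s)$ reduces to a difference of just two entries of the form $m_{r,j}$. In the boundary case exactly one of $s,s+1$ (necessarily the even one) lies in $2^\ell\N$, so one of $q_s,q_{s+1}$ is zero; in the explicit formula for the other value, the periodicity of $M_t$ with period $2^{\ell-1}$ together with $M_0=\mathbf{0}$ (Proposition~\ref{prop:mean_properties}(b)) forces one of the two means in that formula to vanish. In either regime the entrywise bound $|m_{r,j}|\leq\|M_r\|_\infty\leq 1-2^{-(\ell-1)}$ from Proposition~\ref{prop:mean_properties}(c) then yields $|q_{s+1}-q_s| < 2/2^{\ell-1} = 1/2^{\ell-2}$.

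With this sharpened bound the induction is routine. The base case $t=0$ uses $v_0=0$ and $v_1=2q_1$ together with $2^{\ell-1}q_1 = 1 + m_{1,W'-1}$, giving $|v_1|<1/2^{\ell-3}$ from the same mean bound. For $t\geq 1$, setting $r=\lfloor t/2\rfloor<t$ and applying whichever of the two displayed identities corresponds to the parity of $t$, the inductive hypothesis and the sharp $q$-bound above give
\[
|v_{t+1}-v_t| \leq \tfrac{1}{2}|v_{r+1}-v_r| + |q_{t+1}-q_t| \leq \tfrac{1}{2}\cdot\tfrac{1}{2^{\ell-3}} + \tfrac{1}{2^{\ell-2}} = \tfrac{1}{2^{\ell-3}},
\]
which is dominated by $1/2^{\ell-4}$ as claimed.
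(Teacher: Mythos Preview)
Your proof is correct and follows the same inductive skeleton as the paper: both subtract consecutive recurrences from Corollary~\ref{cor:v_formula} to get $v_{t+1}-v_t = \tfrac12(v_{\lfloor t/2\rfloor+1}-v_{\lfloor t/2\rfloor}) + (q_{t+1}-q_t)$, and both conclude by induction that $|v_{t+1}-v_t|\leq 2\max_s|q_{s+1}-q_s|$.

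The difference lies in how $|q_{s+1}-q_s|$ is bounded. The paper simply combines the coarse pointwise estimates $q_t<3/2^{\ell-1}$ and $q_t>-1/2^{\ell-1}$ from Lemma~\ref{lem:q_bounds}, obtaining $|q_{s+1}-q_s|\leq 4/2^{\ell-1}$ and hence the stated $|v_{t+1}-v_t|\leq 1/2^{\ell-4}$ in a few lines. You instead unpack the explicit formula for $2^{\ell-1}q_t$ from the proof of Lemma~\ref{lem:q_bounds}, observe that in $q_{s+1}-q_s$ the constant $1$ and one mean always cancel (or, in the boundary case $s\in 2^{\ell}\N$ or $s+1\in 2^{\ell}\N$, periodicity plus $M_0=\mathbf{0}$ kills a mean), and thereby get the sharper $|q_{s+1}-q_s|<2/2^{\ell-1}$ and the improved conclusion $|v_{t+1}-v_t|\leq 1/2^{\ell-3}$. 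Your route is longer but buys a factor of $2$; the paper's is quicker and already suffices for the statement.
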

\begin{proof}
    By Corollary \ref{cor:v_formula} we have
\begin{align*}
    v_{2t+1} - v_{2t} &= \frac{1}{2}(v_{t+1}-v_t) + q_{2t+1}-q_{2t}, \\
    v_{2t+2} - v_{2t+1} &= \frac{1}{2}(v_{t+1}-v_t) + q_{2t+2}-q_{2t+1}.
\end{align*}
Since 
$$v_1 - v_0 = v_1 = 2q_1 = 2(q_1 - q_0),$$
by simple induction on $t$ and Lemma \ref{lem:q_bounds} we get
$$ |v_{t+1}-v_t| \leq 2\max_{t \in \N} |q_{t+1}-q_t| \leq 2\left(\frac{3}{2^{\ell-1}} + \frac{1}{2^{\ell-1}} \right)  = \frac{1}{2^{\ell-4}}. \qedhere$$
\end{proof}

We are now ready to prove Proposition \ref{prop:variance_ineq}.

\begin{proof}[Proof of Proposition \ref{prop:variance_ineq}]
Define
$$ r_t := \min \{ v_t, v_{t+1}\}, \qquad R_t := \max  \{ v_t, v_{t+1}\}. $$
 Our claim will follow from the chain of inequalities 
\begin{equation} \label{eq:v_inequalities}
    \frac{1}{2^{2\ell-2}} \occ{\tO\tl}{t} \leq r_t \leq v_t \leq R_t \leq  \frac{3(\ell+2)}{2^{\ell-2}}\occ{\tO\tl}{t},
\end{equation}
 for $t \geq 1$, where only the left- and rightmost are nontrivial. Concerning the lower bound on $v_t$, for the moment we ignore the special case $w \in \{\tO\tl,\tl\tO\}$ and treat it in a different way at the end of the proof.

For the sake of both inequalities, we establish relations between $v_t, v_{t+1}$ and $v_{t'}, v_{t'+1}$, where $t'$ is obtained by appending the block $\tO^k$ or $\tl^k$ to the binary expansion of $t$, for some $k \geq 1$.  By Corollary \ref{cor:v_formula}  we have the equalities
\begin{align}
    v_{2^kt}  - v_t &= \sum_{h=1}^k q_{2^h t} =: \Sigma_1, \label{eq:v_block0_even}\\
    v_{2^k t+1} - \left( \left(1-\frac{1}{2^k}\right)v_t + \frac{1}{2^k} v_{t+1} \right)&=  \sum_{h=1}^k \left(\left(1-\frac{1}{2^{k-h}}\right)q_{2^ht} + \frac{1}{2^{k-h}}q_{2^ht+1} \right) =: \Sigma_2, \label{eq:v_block0_odd}
\end{align}
where the second one follows from induction on $k$, and also
\begin{align}
    v_{2^kt+2^{k}-1}  -\left(  \frac{1}{2^k} v_t +  \left(1-\frac{1}{2^k}\right)v_{t+1} \right) &= \sum_{h=1}^k \left(\frac{1}{2^{k-h}}q_{2^ht+2^{h}-1} + \left(1-\frac{1}{2^{k-h}}\right)q_{2^h(t+1)} \right) =: \Sigma_3, \label{eq:v_block1_odd} \\
    v_{2^k (t+1)} -  v_{t+1}  &= \sum_{h=1}^k q_{2^h (t+1)} =:\Sigma_4. \label{eq:v_block1_even}
\end{align}

Starting with the inequality for $R_t$, we need to bound all $\Sigma_n$ from above. For $h \geq \ell$ we have $q_{2^h t}=0$ by Lemma \ref{lem:q_bounds}(i). Applying the upper bound from the same lemma to the remaining summands in $\Sigma_1$, we get
$$  \Sigma_1 \leq \frac{3(\ell-1)}{2^{\ell-1}}, $$
and this inequality also holds for $\Sigma_4$. By the same properties we also get
$$ \Sigma_2 \leq \frac{3}{2^{\ell-1}} \left(\ell-1 + \sum_{h=\ell}^k \frac{1}{2^{k-h}} \right) < \frac{3(\ell+1)}{2^{\ell-1}}, $$
and again the same upper bound on $\Sigma_3$. Therefore, we obtain
$$  R_{2^k t} = \max\left\{ v_t + \Sigma_1, \left(1-\frac{1}{2^k}\right)v_t + \frac{1}{2^k} v_{t+1} + \Sigma_2     \right\} \leq R_t + \frac{3(\ell+1)}{2^{\ell-1}}, $$
and an identical bound on $R_{2^k t+2^k-1}$.
If we now divide the binary expansion $(t)_2$ into maximal blocks of $\tO$s and $\tl$s and let $\occ{*}{t}$ denote their number, then by induction we get
$$ R_t \leq R_0 + \frac{3(\ell+1)}{2^{\ell-1}} \occ{*}{t} \leq  R_0 + \frac{3(\ell+1)}{2^{\ell-2}} \occ{\tO\tl}{t}. $$
At the same time, Corollary \ref{cor:v_formula} and Lemma \ref{lem:q_bounds} imply
$$ R_0 = v_1 = 2q_1 \leq \frac{3}{2^{\ell-2}}, $$
and the upper bound on $R_t$ in \eqref{eq:v_inequalities} follows.

Moving on to the lower bound on $r_t$, we assume that $t$ is even in $\Sigma_1, \Sigma_2$, and odd in $\Sigma_3, \Sigma_4$.
Also, as mentioned before, for the moment we exclude the case $w =\tO\tl,\tl\tO$. If $w$ is not of the form appearing in cases (ii)--(iv) of Lemma \ref{lem:q_bounds} (which rules out $\ell=2$), then all summands corresponding to $h \leq \ell-1$ in each $\Sigma_n$ are at least $1/2^{\ell+1} \geq 1/2^{2\ell-2}$, and thus 
we get
\begin{equation} \label{eq:Sigma_inequality}
    \Sigma_n \geq \frac{1}{2^{2\ell-2}}.
\end{equation}
(In particular, the assumed parity of $t$ ensures that these summands are nonzero.)
The same inequality holds when $w \in \{\tO\tl\tO^{\ell-2},\tl\tl\tO^{\ell-2},\tO\tO\tl^{\ell-2},\tl\tO\tl^{\ell-2}\}$, where each summand is again bounded from below by $1/2^{2\ell-2}$.


If $w \in \{\tO\tl^{\ell-1},\tl\tO^{\ell-1}\}$ for some $\ell \geq 3$, then by  Lemma \ref{lem:q_bounds} for $t$ odd we have 
$q_{2^ht} \geq 1/2^{\ell+1}$ when $h <\ell-1$. At the same time, $q_{2^{\ell-1}t} > -1/2^{\ell-1}$, and $q_{2^ht} =0$ when $h > \ell-1$. In the case $\ell \geq 7$, this is enough to conclude that $\Sigma_1, \Sigma_4$ satisfy \eqref{eq:Sigma_inequality}. 
In the case $3 \leq \ell \leq 6$, the same inequality can be verified numerically for all odd $t < 2^{\ell}$, where it is sufficient to consider $k =\ell-1$ (since $q_{2^{\ell-1}t}$ is the only negative summand). 

Moving on to $\Sigma_2$, by Lemma \ref{lem:q_bounds}(ii) we have the following lower bounds on its summands:
$$ \left(1-\frac{1}{2^{k-h}}\right)q_{2^ht} + \frac{1}{2^{k-h}}q_{2^ht+1} \geq \begin{cases}
   1/2^{2\ell-2} &\text{if } h = 1, \\
   1/2^{\ell+1} &\text{if } 1 < h < \ell-1,\\
   -1/2^{\ell-1} &\text{if } h = \ell-1,\\
   0 &\text{if } h > \ell-1.
\end{cases}$$
Hence, if $k < \ell-1$, only two first cases can occur, and $\Sigma_2$ satisfies \eqref{eq:Sigma_inequality}. If $\ell \geq 7$ and $k \geq \ell-1$, we reach the same conclusion. Finally, for $3 \leq \ell \leq 6$ and $k \geq \ell-1$ we have the inequality
$$\Sigma_2 = \Sigma_1 +  \frac{1}{2^k} \sum_{h=1}^{k} 2^h(q_{2^ht+1}-q_{2^ht}) \geq \frac{1}{2^{\ell+1}} + \frac{1}{2^k} \sum_{h=1}^{\ell-1} 2^h(q_{2^ht+1}-q_{2^ht}),$$
and it can be verified numerically that for any odd $t<2^\ell$ the sum on the right-hand side is positive. In a similar fashion we get $\Sigma_3 \geq 1/2^{\ell+1}$. This proves that \eqref{eq:Sigma_inequality} holds in all cases, except for possibly $w \in \{\tO\tl,\tl\tO\}$. This in turn implies the lower bound on $r_t$ in \eqref{eq:v_inequalities} by a similar induction as in the case of $R_t$.

To conclude, we deal with the lower bound on $v_t$ in the special case $w \in \{\tO\tl,\tl\tO\}$, where we need to modify the method a bit. Indeed, we have $q_0=q_2 =0$ and $q_1 = q_3 = 1/4$, which would only lead to $r_{2^k t} \geq r_t$ and $r_{2^kt+2^k-1} \geq r_t$ with no guaranteed increase. Hence, we focus on individual values $v_t$ rather than pairs $\{v_t,v_{t+1}\}$. By the proof of Lemma \ref{lem:v_diff} we get
$$ |v_{t+1}-v_t| \leq 2\max_{t \in \N} |q_{t+1}-q_t| = \frac{1}{2}.$$
We thus obtain the inequalities
\begin{align*}
    v_{2t+1} - v_t &= \frac{1}{2}(v_{t+1} -v_t) +\frac{1}{4} \geq 0, \\
    v_{4t+1} - v_t &= \frac{1}{4}(v_{t+1} -v_t) +\frac{3}{8} \geq \frac{1}{4}.
\end{align*}
Combined with $v_{2t}=v_t$, this means that appending a digit to the binary expansion of $t$ does not decrease $v_t$, while appending the block $\tO\tl$ increases $v_t$ by at least $1/4$. Since $v_0=0$, we get
$$ v_t \geq \frac{1}{4} \occ{\tO\tl}{t}.  $$
We note that this method apparently does not work for general $w$, as then the bound on $v_{t+1}-v_t$ in Lemma \ref{lem:v_diff} is not tight enough to ensure nonnegativity of $v_{2t+1} - v_t$.
\end{proof}

\begin{remark}
   It should be possible to prove in a similar fashion that for all $k \geq 1$ the $k$th moment of the distribution $\delta_{t,j}$ is of order $\ll (\occ{\tO\tl}{t})^{k-1}$. We believe that for even $k$ the reverse inequality also holds, although the proof for a general $w$ would likely be very hard.
\end{remark}

To conclude this section, we give an upper bound on the differences $v_{t,j}-v_{t,k}$.

\begin{proposition} \label{prop:v_close}
    For all $t \in \N$ and $j,k \in \{0,1,\ldots,2^{\ell-1}-1\}$ we have
    $$ |v_{t,j} - v_{t,k}| \leq 16.$$
\end{proposition}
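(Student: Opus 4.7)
The plan is to prove the stronger statement $\|V_t - v_t \mathbf{1}\|_\infty \leq 8$, from which $\max_{j,k}|v_{t,j} - v_{t,k}| \leq 16$ follows immediately, since $v_t = \frac{1}{2^{\ell-1}} \mathbf{1}^T V_t$ is the mean of the entries of $V_t$. The main tool is to iterate the recurrence of Proposition~\ref{prop:char_fun_rec} exactly $\ell - 1$ times, decomposing
\begin{equation*}
V_t = \sum_{\sigma \in \{B, C\}^{\ell-1}} M_\sigma V_{t_\sigma} + \sum_{i=0}^{\ell - 2} \sum_{\sigma' \in \{B, C\}^i} M_{\sigma'} Q_{t_{\sigma'}},
\end{equation*}
where each $M_\sigma$ is the product of $\ell - 1$ matrices from $\{B, C\}$ along the iteration path, and the integers $t_\sigma, t_{\sigma'}$ arise from the parent-selection procedure. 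A short induction (using the fact that each step shifts $t$ by $\lfloor \cdot/2 \rfloor$ and optionally $+1$, so $|t_{k+1} - t_k/2| \leq 1/2$) shows that every $t_\sigma$ lies in the set $\{\lfloor t/2^{\ell-1} \rfloor, \lceil t/2^{\ell-1} \rceil\}$.

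By Lemma~\ref{lem:snake_matrix_product}, each full-length $M_\sigma$ has a single nonzero row at position $[\sigma]_2$ with all entries equal to $1/2^{\ell-1}$, so the homogeneous part has $r$-th entry equal to $v_{t_{\sigma(r)}}$ for the unique $\sigma(r)$ indexing row $r$. Since all $t_\sigma$ lie in two consecutive integers, Lemma~\ref{lem:v_diff} bounds the spread of this homogeneous contribution by $2^{4-\ell}$, which is negligible. The heart of the proof is then to bound the spread of the correction term. A direct computation from the definition in Proposition~\ref{prop:var_recurrence} shows that each $Q_t$ has at most two nonzero entries, of magnitude at most $3/2$ in the typical case (using $\|M_t\|_\infty \leq 1$ from Proposition~\ref{prop:mean_properties}(c)); in the exceptional case $t \equiv 2^{\ell-1} \pmod{2^\ell}$ the two positions coincide and $\|Q_t\|_\infty$ may reach $3$. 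Each partial product $M_{\sigma'}$ of length $i$ has all its nonzero entries equal to $1/2^i$; combined with the sparsity of $Q$, this bounds each entry of $M_{\sigma'} Q_{t_{\sigma'}}$ by $3/2^i$ in the typical case. Crucially, for distinct $\sigma', \sigma''$ of the same length, the matrices $M_{\sigma'}, M_{\sigma''}$ route their nonzero entries into disjoint rows (again by Lemma~\ref{lem:snake_matrix_product}, applied to columns), so summing over $\sigma' \in \{B, C\}^i$ at a fixed level preserves the entry-wise bound. A geometric-type summation over $i = 0, 1, \ldots, \ell - 2$ then yields the desired uniform bound of $8$ on $\|V_t - v_t \mathbf{1}\|_\infty$.

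The main obstacle is executing the bookkeeping of the geometric sum tightly enough to reach the constant $16$. One must carefully handle the exceptional levels where $\|Q_t\|_\infty = 3$ instead of $3/2$, as well as the small constant shifts coming from the means $q_{t_{\sigma'}}$ (bounded by Lemma~\ref{lem:q_bounds}). However, the conditions $2^i t \equiv 2^{\ell-1} \pmod{2^\ell}$ for different values of $i$ fix different bits of $t$ and are mutually exclusive for any fixed $t$, so at most one level $i$ can contribute an exceptional $Q$; this keeps the overall constants under control and closes the argument.
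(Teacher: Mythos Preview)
Your approach is essentially the paper's: iterate the recurrence $\ell-1$ times, use Lemma~\ref{lem:snake_matrix_product} to see that the homogeneous part places either $v_{\lfloor t/2^{\ell-1}\rfloor}$ or $v_{\lfloor t/2^{\ell-1}\rfloor+1}$ in each row, and bound the $Q$-correction by the geometric sum $\sum_{h} 3/2^h < 6$. The paper merely packages the iteration via the block matrices $\hat D_0,\hat D_1$ acting on the pair $(V_t,V_{t+1})^T$, whereas you expand $A=B+C$ and sum over $\{B,C\}$-paths; these are equivalent bookkeepings, and your observation that paths of the same length occupy disjoint rows is exactly how the paper's block calculation unwinds.

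Two small corrections. First, the intermediate target $\|V_t - v_t\mathbf 1\|_\infty \le 8$ is not what your argument actually yields: you obtain $v_{t,j}=h_j+c_j$ with $h_j\in\{v_{\lfloor t/2^{\ell-1}\rfloor},v_{\lceil t/2^{\ell-1}\rceil}\}$ and $|c_j|<6$, which only gives a deviation from the mean of at most $12+2^{4-\ell}$ (the mean need not be the midpoint of the range). This detour is unnecessary anyway---the paper, and your own data, give directly $|v_{t,j}-v_{t,k}|\le |h_j-h_k|+|c_j|+|c_k|<2^{4-\ell}+12\le 16$. Second, your exceptional-case analysis is superfluous: whether $Q_t$ has one entry of size $\le 3$ or two entries of size $\le 3/2$, one has $\sum_k |(Q_t)_k|\le 3$, so the level-$i$ contribution is bounded by $3/2^i$ uniformly and no ``at most one exceptional level'' argument is needed.
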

\begin{proof}
    Let us define the following block matrices:
    $$  \hat{D}_0 = \begin{bmatrix}
        A & \mathbf{0} \\ B & C
    \end{bmatrix}, \qquad  
    \hat{D}_1 =   \begin{bmatrix}
       B & C \\ \mathbf{0} & A
    \end{bmatrix}.$$
    We can rewrite Proposition \ref{prop:var_recurrence} as
    $$
       \begin{bmatrix}
           V_{2t} \\ V_{2t+1}
       \end{bmatrix} = \hat{D}_{0} \begin{bmatrix}
           V_t \\ V_{t+1}
       \end{bmatrix} +\begin{bmatrix}
           Q_{2t} \\ Q_{2t+1}
       \end{bmatrix}, \qquad
        \begin{bmatrix}
           V_{2t+1} \\ V_{2t+2}
       \end{bmatrix} = \hat{D}_1 \begin{bmatrix}
           V_t \\ V_{t+1}
       \end{bmatrix} +\begin{bmatrix}
           Q_{2t+1} \\ Q_{2t+2}
       \end{bmatrix}.
   $$
    Let $t \equiv [\varepsilon_{\ell-2} \cdots \varepsilon_1 \varepsilon_0]_2 \pmod{2^{\ell-1}}$, where $\varepsilon_h \in \{\tO,\tl\}$ for $h=0,1,\ldots,\ell-2$. After iterating the above relations $\ell-1$ times, we get
    $$  \begin{bmatrix}
           V_{t} \\ V_{t+1}
       \end{bmatrix} = \hat{D}_{\varepsilon_0} \hat{D}_{\varepsilon_1} \cdots  \hat{D}_{\varepsilon_{\ell-2}} \begin{bmatrix}
           V_{\fl{t/2^{\ell-1}}} \\ V_{\fl{t/2^{\ell-1}}+1}
       \end{bmatrix} + \sum_{h=0}^{\ell-2} \hat{D}_{\varepsilon_0} \hat{D}_{\varepsilon_1} \cdots  \hat{D}_{\varepsilon_{h-1}} \begin{bmatrix}
           Q_{\fl{t/2^h}} \\ Q_{\fl{t/2^h}+1}
       \end{bmatrix}.  $$
   For $h=0,1,\ldots,\ell-1$ we write in block form
    $$ \hat{D}_{\varepsilon_0} \hat{D}_{\varepsilon_1} \cdots  \hat{D}_{\varepsilon_{h-1}} = \begin{bmatrix}
        \hat{E}_h & \hat{F}_h \\ \hat{G}_h & \hat{H}_h
    \end{bmatrix},  $$
    where the blocks are of size $2^{\ell-1} \times 2^{\ell-1}$.  Then each of $\hat{E}_h, \hat{F}_h, \hat{G}_h, \hat{H}_h$ is either the zero matrix or a sum of distinct products of matrices $B,C$ of length $h$. Moreover, each such product contributes to precisely one of $\hat{E}_h,\hat{F}_h$ and precisely one of $\hat{G}_h,\hat{H}_h$. By Lemma \ref{lem:snake_matrix_product} this means that for each $j=0,1,\ldots,2^{\ell-1}-1$, the $j$th row of $\hat{E}_h$ (resp.\ $\hat{G}_h$) is nonzero if and only if the $j$th row of $\hat{F}_h$ (resp.\ $\hat{H}_h$) is zero. Furthermore, each nonzero row contains precisely $2^h$ nonzero entries, all equal to $1/2^h$.

        Combining this with the first part of Lemma \ref{lem:q_bounds}, which says that the sum of components of each of $Q_{\fl{t/2^h}}$ and $Q_{\fl{t/2^h}+1}$ is at most $3$, we obtain
    $$\left\|\sum_{h=0}^{\ell-2} \hat{D}_{\varepsilon_0} \hat{D}_{\varepsilon_1} \cdots  \hat{D}_{\varepsilon_{h-1}} \begin{bmatrix}
           Q_{\fl{t/2^h}} \\ Q_{\fl{t/2^h}+1}
       \end{bmatrix}\right\|_{\infty} \leq \sum_{h=0}^{\ell-2} \frac{3}{2^h} < 6.$$
    At the same time, by Lemma \ref{lem:snake_matrix_product} each component of the vector $\hat{E}_{\ell-1}V_{\fl{t/2^{\ell-1}}} + \hat{F}_{\ell-1}V_{\fl{t/2^{\ell-1}}+1}$ is equal to either $v_{\fl{t/2^{\ell-1}}}$ or $v_{\fl{t/2^{\ell-1}}+1}$. 
If that vector contains $v_{\fl{t/2^{\ell-1}}}$ at position $j$ and $v_{\fl{t/2^{\ell-1}}+1}$ at position $k$, we get
\begin{align*}
    |v_{t,j} - v_{t,k}| &\leq |v_{t,j}-v_{\fl{t/2^{\ell-1}}}| + | v_{t,k}-v_{\fl{t/2^{\ell-1}}+1}| + |v_{\fl{t/2^{\ell-1}}}-v_{\fl{t/2^{\ell-1}}+1}| \\
    &\leq 6 + 6 + \frac{1}{2^{\ell-4}} \leq 16,
\end{align*} 
where we have used Lemma \ref{lem:v_diff}. This inequality also holds if the roles of $j,k$ are reversed, or the $j$th and $k$th components are the same.
\end{proof}

\section{Gaussian approximation of the characteristic functions} \label{sec:normal}

In this section we study more closely the behavior of the characteristic functions $\gamma_{t}$. Our main goal is to prove Proposition \ref{prop:normal_approx}, that is, estimate the error of approximation by the characteristic function of normal distribution with the same variance $v_t$ and mean (equal to $0$). As in Section \ref{sec:idea}, the approximating function is defined by
$$\hat{\gamma}_t(\theta) = \exp \left(-\frac{v_t}{2}\theta^2\right),$$
and the error is
$$\widetilde{\gamma}_t(\theta) = \gamma_t(\theta) - \hat{\gamma}_t(\theta).$$
By definition, the function $\widetilde{\gamma}_t(\theta)$ expanded into a power series in $\theta$ only contains terms of order at least $3$, so we have $\widetilde{\gamma}_t(\theta) = O(\theta^3)$ as $\theta \to 0$. To investigate how the implied constant depends on $t$, we consider analogous approximations to the functions $\gamma_{t,j}$, and corresponding errors:
\begin{align*}
    \hat{\gamma}_{t,j}(\theta) &= \exp\left( i m_{t,j} \theta - \frac{u_{t,j}}{2} \theta^2 \right), \\
    \widetilde{\gamma}_{t,j}(\theta) &= \gamma_{t,j}(\theta) - \hat{\gamma}_{t,j}(\theta),
\end{align*} 
where $u_{t,j}$ is the variance of the distribution $\delta_{t,j}$:
$$  u_{t,j} = v_{t,j} - m_{t,j}^2. $$
A bound on $\widetilde{\gamma}_t$ will follow from an estimation of each $\widetilde{\gamma}_{t,j}$ as well as the difference
\begin{equation} \label{eq:gamma_difference}
    \widetilde{\gamma}_t - \frac{1}{2^{\ell-1}} \sum_{j=0}^{2^{\ell-1}-1} \widetilde{\gamma}_{t,j} = \hat{\gamma}_t -  \frac{1}{2^{\ell-1}} \sum_{j=0}^{2^{\ell-1}-1}  \hat{\gamma}_{t,j}.
\end{equation}
For later reference, we first state a rough upper bound on terms of order $\geq 3$ of certain exponential functions. Here and in the sequel, for a power series $g(\theta) = \sum_{n=0}^{\infty} c_n \theta^n$ we let $g_{\geq 3}$ denote the contribution of these terms:
$$g_{\geq 3}(\theta) = \sum_{n=3}^{\infty} c_n \theta^n.$$

\begin{lemma} \label{lem:order_3}
    Let $a,b, \theta_0 \in \R$, where $\theta_0 > 0$. There exists a constant $\mathcal{K}(a,b,\theta_0)$ such that for all $|\theta| \leq \theta_0$ we have
    $$ |\exp(ia\theta + b \theta^2)_{\geq 3}| \leq L(a,b,\theta_0) |\theta|^3,$$
    and we can put
   $$\mathcal{K}(a,b,\theta_0) =  |ab| + \frac{b^2}{2}\theta_0  + \frac{(|a|+|b|\theta_0)^3}{6}\exp(|a| \theta_0+|b|\theta_0^2 ).  $$
\end{lemma}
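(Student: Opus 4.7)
The plan is to expand $\exp(ia\theta + b\theta^2)$ as a power series in $\theta$ and isolate the contributions of degree at least $3$. Writing
$$\exp(ia\theta + b\theta^2) = \sum_{n=0}^{\infty} \frac{(ia\theta + b\theta^2)^n}{n!},$$
the summands with $n \in \{0,1,2\}$ are the only ones that can contribute terms of degree $\leq 2$. Since
$$\tfrac{1}{2}(ia\theta + b\theta^2)^2 = -\tfrac{a^2}{2}\theta^2 + iab\theta^3 + \tfrac{b^2}{2}\theta^4,$$
collecting what survives leaves exactly
$$\exp(ia\theta + b\theta^2)_{\geq 3} = iab\theta^3 + \frac{b^2}{2}\theta^4 + \sum_{n=3}^{\infty} \frac{(ia\theta + b\theta^2)^n}{n!}.$$

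Next, for $|\theta| \leq \theta_0$ I would estimate each of the three pieces by an explicit multiple of $|\theta|^3$. The first contributes $|ab|\,|\theta|^3$ exactly; the second is at most $\tfrac{b^2}{2}|\theta|^4 \leq \tfrac{b^2}{2}\theta_0\,|\theta|^3$. For the infinite tail, I would combine the bound $|ia\theta + b\theta^2| \leq (|a|+|b|\theta_0)|\theta|$ with the elementary inequality
$$\sum_{n=3}^{\infty} \frac{x^n}{n!} \leq \frac{x^3}{6}\,e^x \qquad (x \geq 0),$$
which follows from $(k+3)! \geq 6\cdot k!$ for $k \geq 0$. Applying this with $x = (|a|+|b|\theta_0)|\theta|$ in the $x^3/6$ factor and with the crude upper bound $x \leq |a|\theta_0 + |b|\theta_0^2$ inside the exponential gives
$$\left|\sum_{n=3}^{\infty} \frac{(ia\theta + b\theta^2)^n}{n!}\right| \leq \frac{(|a|+|b|\theta_0)^3}{6}\exp(|a|\theta_0 + |b|\theta_0^2)\,|\theta|^3.$$
Summing the three bounds reproduces precisely the stated constant $\mathcal{K}(a,b,\theta_0)$.

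There is no genuine obstacle here: the argument is a routine estimate on the exponential power series, and splitting the sum at $n=3$ cleanly separates the degree-$\geq 3$ part, circumventing any need for a complex Taylor remainder. The reason for keeping $\mathcal{K}$ in this explicit form is that the lemma will be used with $a = m_{t,j}$ and $b = -u_{t,j}/2$ to control $\hat{\gamma}_{t,j}(\theta)_{\geq 3}$, so that the downstream error $\widetilde{\gamma}_{t,j}$ can be tracked directly in terms of the moments already bounded in Sections~\ref{sec:first} and~\ref{sec:second}.
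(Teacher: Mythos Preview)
Your proof is correct and follows essentially the same approach as the paper: the same decomposition into $iab\theta^3 + \tfrac{b^2}{2}\theta^4$ plus the tail $\sum_{n\geq 3}(ia\theta+b\theta^2)^n/n!$, and the same elementary bound $\sum_{n\geq 3} x^n/n!\leq \tfrac{x^3}{6}e^x$ applied with $x=|ia\theta+b\theta^2|$. Your justification of that inequality via $(k+3)!\geq 6\,k!$ is a bit more explicit than the paper's one-line appeal to Taylor's theorem, but otherwise the arguments coincide.
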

\begin{proof}
We have
$$ \exp(ia\theta + b \theta^2)_{\geq 3} = i ab \theta^3 + \frac{b^2}{2} \theta^4 + \sum_{n = 3}^{\infty} \frac{(ia \theta + b\theta^2)^n}{n!}$$
By Taylor's theorem, for any real $x \geq 0$ we get
$$   \sum_{n=3}^{\infty} \frac{x^n}{n!}\leq  \frac{x^3}{6} e^x,$$
and the statement follows by plugging in $x=|ia\theta + b\theta^2|$.
\end{proof}

We now arrange the functions $ \hat{\gamma}_{t,j}$ and errors $\widetilde{\gamma}_{t,j}$ into column vectors
$$   \hat{\Gamma}_t = \begin{bmatrix}
      \hat{\gamma}_{t,0} & \cdots &  \hat{\gamma}_{t,2^{\ell-1}-1}
\end{bmatrix}^T, \qquad  \widetilde{\Gamma}_t = \begin{bmatrix}
     \widetilde{\gamma}_{t,0} & \cdots & \widetilde{\gamma}_{t,2^{\ell-1}-1}
\end{bmatrix}^T.$$
The following lemma provides an upper bound on $\| \widetilde{\Gamma} (\theta)  \|_{\infty}$ over an interval around $0$, which is the main step in the proof of Proposition \ref{prop:normal_approx}.

\begin{lemma} \label{lem:normal_approximation}
    Let $t \in \N$ and $\theta_0 > 0$. Then there exists a constant $\mathcal{K}_1(\theta_0)$ such that for all $\theta \in [-\theta_0, \theta_0]$ we have
    $$  \|\widetilde{\Gamma}_t(\theta)\|_{\infty} \leq  \mathcal{K}_1(\theta_0) \occ{\tO\tl}{t} |\theta|^3, $$
    where we can put $\mathcal{K}_1(\theta_0) = (4 \ell-1) \mathcal{K}(3,19,\theta_0)$.
\end{lemma}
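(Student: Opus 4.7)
The approach is inductive in $t$, based on the recurrence \eqref{eq:gamma_rec}. Substituting $\gamma_{\cdot,\cdot} = \hat\gamma_{\cdot,\cdot} + \widetilde\gamma_{\cdot,\cdot}$ throughout \eqref{eq:gamma_rec} gives
\[
\widetilde\gamma_{t,j}(\theta) = \tfrac{1}{2}\e(\varphi(t,j)\theta)\,\widetilde\gamma_{t',\lfloor j/2\rfloor}(\theta) + \tfrac{1}{2}\e(\varphi(t,2^{\ell-1}+j)\theta)\,\widetilde\gamma_{t',\lfloor j/2\rfloor+2^{\ell-2}}(\theta) + R_{t,j}(\theta),
\]
where $t' = \lfloor t/2\rfloor + (tj\bmod 2)$, and the residual
\[
R_{t,j}(\theta) := \tfrac{1}{2}\e(\varphi(t,j)\theta)\hat\gamma_{t',\lfloor j/2\rfloor}(\theta) + \tfrac{1}{2}\e(\varphi(t,2^{\ell-1}+j)\theta)\hat\gamma_{t',\lfloor j/2\rfloor+2^{\ell-2}}(\theta) - \hat\gamma_{t,j}(\theta)
\]
quantifies how much the Gaussians $\hat\gamma$ fail to satisfy the same recurrence.

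The first task is the uniform per-step bound $|R_{t,j}(\theta)| \leq \mathcal{K}(3,19,\theta_0)|\theta|^3$ for $|\theta|\leq\theta_0$. The Taylor coefficients of $R_{t,j}$ at $\theta = 0$ of orders $\theta^0, \theta^1, \theta^2$ vanish: these equations are precisely the moment recurrences of Propositions \ref{prop:mean_recurrence} and \ref{prop:var_recurrence}, which the $\hat\gamma$'s are built to satisfy. To pin down the implied constant uniformly in $t$, we factor out $\exp(-u_{t,j}\theta^2/2)$ (of modulus at most $1$) from $R_{t,j}$, leaving a linear combination of three exponentials $\exp(ia\theta + b\theta^2)$. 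Proposition \ref{prop:mean_properties}(c) yields $|a| \leq |\varphi|+|m| \leq 2 \leq 3$; and combining $u = v - m^2$ with Proposition \ref{prop:v_close}, Corollary \ref{cor:v_formula}, Lemma \ref{lem:q_bounds}, and Lemma \ref{lem:v_diff} shows $|b| = |u_{t,j}-u_{t',k_i}|/2 \leq 19$. Subtracting the (cancelling) degree-$\leq 2$ Taylor polynomials and invoking Lemma \ref{lem:order_3} for each exponential then yields the claimed uniform bound.

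The second task is the induction. Iterating the displayed recurrence reduces $\widetilde\Gamma_t$ to the base case $\widetilde\Gamma_0 = 0$ while accumulating residuals. A naive triangle-inequality argument would bound $\|\widetilde\Gamma_t(\theta)\|_\infty$ by $|(t)_2|\cdot\mathcal{K}(3,19,\theta_0)|\theta|^3$, which is too weak: the main obstacle, and the crux of the proof, is to trade the factor $|(t)_2|$ for $\occ{\tO\tl}{t}$. We would group iterations by maximal runs of identical digits in $(t)_2$: within a run of length $h$, Lemma \ref{lem:snake_matrix_product} shows the composed recurrence matrix collapses (for $h \geq \ell - 1$) to a row-concentrated form with entries $1/2^h$, and Lemma \ref{lem:char_fun_eq} aligns the $\gamma_{2^k t, j}$ across $j$ once $k \geq 2\ell-2$. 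These two structural facts allow the residuals produced inside a long run to either cancel or telescope, contributing only a bounded amount per run. Since $(t)_2$ decomposes into $\occ{\tO\tl}{t}+O(1)$ maximal runs, the total accumulated residual is of order $\occ{\tO\tl}{t}$, and the explicit constant $(4\ell-1)$ in $\mathcal{K}_1(\theta_0)$ reflects the worst-case per-run overhead.
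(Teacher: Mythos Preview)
Your approach matches the paper's: define the residual arising when $\hat\gamma$ is plugged into the recurrence, bound it pointwise by $\mathcal{K}(3,19,\theta_0)|\theta|^3$ using Lemma~\ref{lem:order_3} together with the moment estimates, then iterate along maximal digit runs. The per-step bound is handled correctly.

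However, the second half is too vague to count as a proof, and the word ``telescope'' suggests the wrong picture. Two concrete points are missing. First, because the odd recurrence mixes $\Gamma_t$ and $\Gamma_{t+1}$, you must track the pair $(\widetilde\Gamma_t,\widetilde\Gamma_{t+1})$ jointly via the block matrices $D_{2t},D_{2t+1}$; the paper does this by introducing $R_t(\theta)=\max\{\|\widetilde\Gamma_t\|_\infty,\|\widetilde\Gamma_{t+1}\|_\infty\}$ and proving $R_{2^kt}(\theta)\leq R_t(\theta)+(2\ell-1)\mathcal{K}_2(\theta_0)|\theta|^3$ (and similarly for $R_{2^kt+2^k-1}$). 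Second, within a long run of zeros the accumulated residuals do not telescope. Rather, two separate mechanisms are at work: the residuals $X_{2^ht}$ vanish identically for $h\geq 2\ell-1$ because Lemma~\ref{lem:char_fun_eq} forces all components of $\hat\Gamma_{2^ht}$ to coincide, while the residuals $X_{2^ht+1}$ survive but are hit by a block $H_h(\theta)$ with $H_h(0)=C^{h-1}$, whose only nonzero row has entries $1/2^{h-1}$ by Lemma~\ref{lem:snake_matrix_product}; this makes their total contribution a convergent geometric series. The analogous argument for runs of ones uses $B^{h-1}$ and $X_{2^h(t+1)}=0$.

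Finally, you omit the base case. Since $R_0(\theta)=\|\widetilde\Gamma_1(\theta)\|_\infty$, one must verify $\|\widetilde\Gamma_1(\theta)\|_\infty\leq\mathcal{K}_2(\theta_0)|\theta|^3$ by explicitly analyzing $\Gamma_1=(I-C_1)^{-1}B_1\mathbf{1}$; the case $w=\tO^\ell$ requires separate treatment because $\gamma_{1,2^{\ell-1}-1}$ is then a genuine rational function of $\e(\theta)$.
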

\begin{proof}
Let us define
$$  R_t(\theta) = \max\{ \|\widetilde{\Gamma}_t(\theta)\|_{\infty}, \|\widetilde{\Gamma}_{t+1}(\theta)\|_{\infty} \}. $$
For $t = 0$ our result holds because $\widetilde{\Gamma}_t(\theta)=0$, while for $t \geq 1$ we claim that the following, stronger inequality holds:
$$  R_t(\theta)  \leq \mathcal{K}_1(\theta_0) \occ{\tO\tl}{t}  |\theta|^3.$$
Let
\begin{align*}
    X_{2t} &= \widetilde{\Gamma}_{2t} - A_{2t} \widetilde{\Gamma}_t = A_{2t}\hat{\Gamma}_t - \hat{\Gamma}_{2t}, \\
    X_{2t+1} &= \widetilde{\Gamma}_{2t+1} - B_{2t+1} \widetilde{\Gamma}_t   - C_{2t+1}\widetilde{\Gamma}_{t+1} = B_{2t+1}\hat{\Gamma}_t + C_{2t+1}\hat{\Gamma}_{t+1} - \hat{\Gamma}_{2t+1}. 
\end{align*}
Roughly speaking, these are errors which arise when we apply to $\widetilde{\Gamma}_t$ the recurrence relation ``borrowed'' from $\Gamma_t$. 

We will iterate these relations written in block form, using the notation
$$  D_{2t} = \begin{bmatrix}
    A_{2t} & \mathbf{0} \\ B_{2t+1} & C_{2t+1}
\end{bmatrix}, \qquad D_{2t+1} = \begin{bmatrix}
    B_{2t+1} & C_{2t+1} \\ \mathbf{0} & A_{2t+2}
\end{bmatrix}. $$
In particular, we have $D_{2t}(0) = \hat{D}_0$ and $D_{2t+1}(0) = \hat{D}_1$, as denoted in the proof of Proposition \ref{prop:v_close}. 
By induction on $k \in \N$, we have
\begin{equation} \label{eq:Gamma_tilde_0}
    \begin{bmatrix}
    \widetilde{\Gamma}_{2^k t} \\
    \widetilde{\Gamma}_{2^k t+1}
\end{bmatrix} = D_{2^k t} D_{2^{k-1}t} \cdots D_{2t} \begin{bmatrix}
    \widetilde{\Gamma}_{t} \\
    \widetilde{\Gamma}_{t+1}
\end{bmatrix}  + \sum_{h=1}^{k} D_{2^k t} \cdots D_{2^{h+1}t} \begin{bmatrix}
    X_{2^h t} \\ 
    X_{2^h t+1}
\end{bmatrix},
\end{equation}  
as well as
\begin{align} 
\begin{bmatrix}
    \widetilde{\Gamma}_{2^k t + 2^k-1} \\
    \widetilde{\Gamma}_{2^k (t+1)}
\end{bmatrix} = &D_{2^kt+2^k-1} D_{2^{k-1}t+2^{k-1}-1} \cdots D_{2t+1} \begin{bmatrix}
    \widetilde{\Gamma}_{t} \\
    \widetilde{\Gamma}_{t+1}
\end{bmatrix} \nonumber \\
&+ \sum_{h=1}^{k} D_{2^kt+2^k-1} \cdots D_{2^{h+1}t+2^h-1} \begin{bmatrix}
    X_{2^h t+2^h-1} \\
    X_{2^h (t+1)}
\end{bmatrix}. \label{eq:Gamma_tilde_1} 
\end{align}
We need to bound the sums in both expressions, in particular the terms $X_t$.
By definition the $j$th component of $X_t(\theta)$ is of the form
\begin{equation} \label{eq:X_component}
    - \hat{\gamma}_{t,j}(\theta) + \frac{\e(\rho \theta)}{2} \hat{\gamma}_{t',r}(\theta) +\frac{\e(\sigma \theta)}{2} \hat{\gamma}_{t',s}(\theta) = \frac{\hat{\gamma}_{t,j}(\theta)}{2}\left(-2 + \e(\rho \theta)\frac{\hat{\gamma}_{t',r}}{\hat{\gamma}_{t,j}(\theta)} + \e(\sigma \theta)\frac{\hat{\gamma}_{t',s}(\theta)}{\hat{\gamma}_{t,j}(\theta)} \right),
\end{equation}   
where $t' \in \{\fl{t/2}, \fl{t/2}+1\}$ and $\rho, \sigma \in\{-1,0,1\}$, and $r,s \in \{0,1, \ldots, 2^{\ell-1}-1\}$. 
Since $\hat{\gamma}_{t, j}(\theta) = 1 + O(\theta)$ and the whole expression is $O(\theta^3)$, the expression in parentheses is also $O(\theta^3)$. Hence, it suffices to  bound terms of order $\geq 3$ of each summand there. We can write 
\begin{equation} \label{eq:first_summand}
    \e(\rho \theta)\frac{\hat{\gamma}_{t',r}(\theta)}{\hat{\gamma}_{t,j}(\theta)} = \exp(ia\theta+b\theta^2),
\end{equation}
where
$$  a = \rho + m_{t',r} - m_{t,j}, \qquad b = \frac{1}{2}(u_{t,j}-u_{t',r}).  $$
By Proposition \ref{prop:mean_properties} we get $|a| < 3$. At the same time,
\begin{align*}
    |2b| \leq |v_{t,j}-v_{t',r}| + |m_{t',r}^2-m_{t,j}^2 |  &\leq |v_{t,j}-v_{t}|+|v_{t}-v_{t'}|+|v_{t'}-v_{t',r}|+1- \frac{1}{2^{\ell-1}} \leq  \\
    &\leq 16 + \frac{3}{2^{\ell-1}} + \frac{1}{2^{\ell-4}} + 16 +1- \frac{1}{2^{\ell-1}} \leq 38,
\end{align*} 
 where the bounds on $v_{t,j}-v_{t}$ and $v_{t'}-v_{t',r}$ follow from Proposition \ref{prop:v_close}, while the bound on $v_{t}-v_{t'}$ is a consequence of Lemmas \ref{lem:q_bounds} and \ref{lem:v_diff}.
 Lemma \ref{lem:order_3} implies that the contribution of terms of order $\geq 3$ in \eqref{eq:first_summand} has absolute value bounded by $\mathcal{K}_2(\theta_0) |\theta|^3$, where we set 
 $$\mathcal{K}_2(\theta_0) = \frac{\mathcal{K}_1(\theta_0)}{4\ell-1} = \mathcal{K}(3,19,\theta_0) \geq \mathcal{K}(a,b,\theta_0).$$
 We can do exactly the same for the other non-constant summand $\e(\rho \theta)\hat{\gamma}_{t',r}(\theta)/\hat{\gamma}_{t,j}(\theta)$ in  \eqref{eq:X_component}.  
 Using $|\hat{\gamma}_{t,j}(\theta)| \leq 1$, we can bound \eqref{eq:X_component}, and therefore the whole vector $X_t$ in the following way:
\begin{equation} \label{eq:X_bound}
    \| X_{t}(\theta)\|_{\infty} \leq L |\theta|^3.
\end{equation}
Furthermore, Lemma \ref{lem:char_fun_eq} says that for $h \geq 2\ell-2$ the vector $\Gamma_{2^h t}$ has all components equal to $\gamma_{2^{2\ell-2}}$. Hence, $\hat{\Gamma}_{2^h t}$ has all components equal to $\hat{\gamma}_{2^{2\ell-2}}$, and we get
    \begin{equation} \label{eq:zero_vector}
        X_{2^{h}t} = \hat{\Gamma}_{2^{h}t} - A \hat{\Gamma}_{2^{h-1}t} = 0, \qquad h \geq 2\ell-1.
    \end{equation}  
    
We are now ready to bound the norm of \eqref{eq:Gamma_tilde_0} and \eqref{eq:Gamma_tilde_1}. Starting with the former expression,
by submultiplicativity we have 
$$   \left\| D_{2^k t}(\theta) D_{2^{k-1}t}(\theta) \cdots D_{2t}(\theta) \begin{bmatrix}
    \widetilde{\Gamma}_{t}(\theta) \\
    \widetilde{\Gamma}_{t+1}(\theta)
\end{bmatrix}  \right\|_{\infty} \leq R_t(\theta).  $$
Furthermore, for $h \leq \min\{2\ell-2,k\}$ by \eqref{eq:X_bound} we get
$$ \left\| D_{2^k t}(\theta) D_{2^{k-1}t}(\theta) \cdots D_{2^{h+1}t}(\theta) \begin{bmatrix}
   X_{2^ht}(\theta) \\
  X_{2^ht+1}(\theta)
\end{bmatrix}  \right\|_{\infty} \leq \mathcal{K}_2(\theta_0)|\theta|^3.  $$
In order to bound the terms with $h > 2\ell-2$ in \eqref{eq:Gamma_tilde_0} (when $k >  2\ell-2$), write in block form
$$ D_{2^k t} D_{2^{k-1}t} \cdots D_{2^{h+1}t} = \begin{bmatrix}
    E_h & \mathbf{0} \\ G_h & H_h
\end{bmatrix}. $$
In particular, we have $H_h(0) = C^{h-1}$ and by Lemma \ref{lem:snake_matrix_product} this matrix has entries $1/2^{h-1}$ in the bottom row and all other entries equal to $0$. Thus, by \eqref{eq:zero_vector} we get
$$ \left\| \begin{bmatrix}
    E_h & \mathbf{0} \\ G_h & H_h
\end{bmatrix}  \begin{bmatrix}
    X_{2^h t} \\ 
    X_{2^h t+1}
\end{bmatrix} \right\|_{\infty} \leq \left\|\begin{bmatrix}
    \mathbf{0} \\ H_h(\theta) X_{2^h t+1}(\theta)
\end{bmatrix} \right\|_{\infty} \leq  \frac{\mathcal{K}_2(\theta_0)}{2^{h-1}}|\theta|^3.$$
Combining all of the above, we obtain
$$ R_{2^kt}(\theta) \leq R_{t}(\theta) + (2\ell-2)\mathcal{K}_2(\theta_0) |\theta|^3 + \sum_{h=2\ell-1}^k \frac{\mathcal{K}_2(\theta_0)}{2^{h-1}} |\theta|^3 < R_{t}(\theta) + (2\ell-1)\mathcal{K}_2(\theta_0) |\theta|^3.$$

Moving on to \eqref{eq:Gamma_tilde_1}, in a similar fashion we can bound the expression before the sum as well as the terms corresponding to $h \leq 2\ell-2$. To bound the remaining terms, this time we put
$$ D_{2^k t + 2^k-1} \cdots D_{2^{h+1}t + 2^{h+1} -1} = \begin{bmatrix}
    E_h & F_h \\ \mathbf{0} & H_h
\end{bmatrix},$$
where, in particular, $E_h(0) = B^h$. Again, this  matrix has entries $1/2^{h-1}$ in the top row and all other entries equal to $0$, which combined with $X_{2^h(t+1)} = 0$ yields
$$ \left\| \begin{bmatrix}
    E_h & F_h \\ \mathbf{0} & H_h
\end{bmatrix}  \begin{bmatrix}
    X_{2^h t + 2^h -1} \\ 
    X_{2^h (t+1)}
\end{bmatrix} \right\|_{\infty} \leq \left\|\begin{bmatrix}
    E_h X_{2^h t + 2^h -1} \\ \mathbf{0}
\end{bmatrix} \right\|_{\infty} \leq  \frac{\mathcal{K}_2(\theta_0)}{2^{h-1}} |\theta|^3.$$
As a consequence, we again get 
$$ R_{2^kt+2^k-1}(\theta)  < R_{t}(\theta) + (2\ell-1)\mathcal{K}_2(\theta_0) |\theta|^3.$$
By induction on the number $\occ{*}{t}$ of maximal blocks of $\tO$s and $\tl$s in $(t)_2$, we thus obtain
$$  R_t(\theta) \leq R_0(\theta) + (2\ell-1)\mathcal{K}_2(\theta_0) \occ{*}{t} |\theta|^3 \leq R_0(\theta)+2(2\ell-1)\mathcal{K}_2(\theta_0) \occ{\tO\tl}{t} |\theta|^3.$$

It remains to show that $R_0(\theta) = \|\widetilde{\Gamma}_1(\theta)\|_{\infty} \leq \mathcal{K}_2(\theta_0) |\theta|^3$. By Proposition \ref{prop:char_fun_rec} we can explicitly write $\Gamma_1 = (I-C_1)^{-1} B_1 \mathbf{1}$. Without loss of generality we can assume that $w$ ends with a $\tO$, due to Proposition \ref{prop:symmetry}. Then the matrices $B_1$ and $C_1$ contain precisely one nonconstant entry: $\e(-\theta)/2$ and $\e(\theta)/2$, respectively. 
In the case $w \neq \tO^\ell$, since $\det(I - C_1(\theta)) = 1/2$, we can deduce that 
\begin{equation} \label{eq:gamma_1_form}
    \gamma_{1,j}(\theta) = \delta_{t,j}(-1) \e(-\theta) + \delta_{t,j}(0) + \delta_{t,j}(1) \e(\theta)
\end{equation} 
(another, more direct way would be to show that $|d_1(n)| \leq 1$).
Therefore, by Lemma \ref{lem:order_3}
$$  |(\gamma_{t,j})_{\geq 3}(\theta)| \leq \delta_{t,j}(-1) \mathcal{K}(-1,0,\theta_0)  |\theta|^3+ \delta_{t,j}(1) L(1,0,\theta_0)  |\theta|^3 \leq \mathcal{K}(1,0,\theta_0) |\theta|^3.$$
At the same time,
$$ |(\hat{\gamma}_{t,j})_{\geq 3}(\theta)| \leq \mathcal{K}(m_{1,j},u_{1,j}/2,\theta_0) |\theta|^3 \leq \mathcal{K}(1,1/2,\theta_0) |\theta|^3. $$
It is easy to check that $\mathcal{K}(1,0,\theta_0) + \mathcal{K}(1,1/2,\theta_0) \leq \mathcal{K}_2(\theta_0)$, which proves our claim.

In the case $w = \tO^\ell$, we note that
$$ I-C_1 = \begin{bmatrix}
   P & \mathbf{0} \\ S & 1-\e(\theta)/2 
\end{bmatrix}, $$
where $P$ is a constant square matrix of size $2^{\ell-1}-1$. Hence, for $j\leq2^{\ell-1}-2$, the function $\gamma_{1,j}$ is again of the form \eqref{eq:gamma_1_form}. For $j=2^{\ell-1}-1$, writing out the last component in $\Gamma_1 = B_1 \mathbf{1} + C_1 \Gamma_1$ yields
 \begin{equation} \label{eq:gamma_1_form2}
     \gamma_{1,2^{\ell-1}-1} = \frac{\gamma_{1,2^{\ell-2}-1}(\theta)}{2 -\e(\theta)}. 
 \end{equation} 
Setting $j=2^{\ell-1}-1, j' = 2^{\ell-2}-1$ for brevity, we get
$$  \widetilde{\gamma}_{1,j}(\theta) = \frac{1}{2-\e(\theta)}  \left( \gamma_{1,j'}(\theta) - 2\exp \left(im_{1,j}\theta-\frac{u_{1,j}}{2} \theta^2\right) +  \exp \left(i(m_{1,j} + 1)\theta-\frac{u_{1,j}}{2} \theta^2\right) \right).$$
A direct calculation involving taking the second derivative of \eqref{eq:gamma_1_form2} yields a rough bound $u_{1,j} \leq v_{1,j} \leq 6$.
Bounding the absolute value of the denominator from below by $1$ and again applying Lemma \ref{lem:order_3} to each term in the parentheses, we reach the inequality 
$$|\widetilde{\gamma}_{1,j}(\theta)| \leq  (\mathcal{K}(1,0,\theta_0)+2\mathcal{K}(1,3,\theta_0) + \mathcal{K}(2,3,\theta_0))|\theta|^3.$$
A straightforward calculation shows that the constant is still $<\mathcal{K}_2(\theta_0)$, and the result follows.
\end{proof}

We are now ready to finish the proof of Proposition \ref{prop:normal_approx}.

\begin{proof}[Proof of Proposition \ref{prop:normal_approx}]
The result holds for $t=0$ so we can assume that $t \geq 1$.
    Using identity \eqref{eq:gamma_difference} and Lemma \ref{lem:normal_approximation}, we get
    \begin{align*} 
    |\widetilde{\gamma}_t(\theta)| &\leq \frac{1}{2^{\ell-1}} \sum_{j=0}^{2^{\ell-1}-1} |\widetilde{\gamma}_{t,j}(\theta)| + \frac{1}{2^{\ell-1}}|\hat{\gamma}_{t,j}(\theta)| \left|\sum_{j=0}^{2^{\ell-1}-1} \frac{\hat{\gamma}_{t,j}(\theta)}{\hat{\gamma}_t(\theta)}\right| \\
    &\leq \mathcal{K}_1(\theta_0) \occ{\tO\tl}{t} |\theta|^3 + \frac{1}{2^{\ell-1}} \left|\sum_{j=0}^{2^{\ell-1}-1} \frac{\hat{\gamma}_{t,j}(\theta)}{\hat{\gamma}_t(\theta)} \right|.
    \end{align*}
    The remaining sum is $O(\theta^3)$ so it suffices to bound the contribution of terms of order $\geq 3$ in each summand. We have
   \begin{equation} \label{eg:gamma_quotient}
       \frac{\hat{\gamma}_{t,j}(\theta)}{\hat{\gamma}_t(\theta)} = \exp(ia\theta + b\theta),
   \end{equation}    
    where
    $$  a = m_{t,j}, \qquad b = \frac{1}{2}(v_t-u_{t,j}). $$
    By Proposition \ref{prop:mean_properties} and \ref{prop:v_close} we get $|a|<1$, and $|b| <9 $ so Lemma \ref{lem:order_3} shows that \eqref{eg:gamma_quotient} is bounded in absolute value by $\mathcal{K}(1,9,\theta_0)|\theta|^3 < \mathcal{K}_2(\theta_0) |\theta|^3 \leq \mathcal{K}_2(\theta_0) \occ{\tO\tl}{t} |\theta|^3$, where $\mathcal{K}_2(\theta_0)$ is the same as in Lemma \ref{lem:normal_approximation}. Letting $K(\theta_0) = \mathcal{K}_1(\theta_0)+\mathcal{K}_2(\theta_0)= 4\ell \mathcal{K}(3,19,\theta_0)$, we obtain the result.
    \end{proof}

\section{Upper bound on the characteristic function} \label{sec:upper}

In this section we prove the final ingredient of our main theorem, namely Proposition \ref{prop:char_fun_bound}. First, we state a standard bound, with proof included for completeness.

\begin{lemma} \label{lem:norm_bound}
    For any $\theta \in [-\pi,\pi]$ and $k \in \Z$ we have
    $$  |1+e(k\theta)| + |1+e((k+1)\theta)| \leq 4 - \frac{\theta^2}{\pi^2}. $$
\end{lemma}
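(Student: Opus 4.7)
The plan is to reduce the estimate to a one-variable cosine inequality by exploiting the identity $|1+\e(\beta)|=2|\cos(\beta/2)|$ and then to locate the maximum of $|\cos(k\theta/2)|+|\cos((k+1)\theta/2)|$ over $k$ (treated as a real parameter via $\pi$-periodicity of $|\cos|$). Writing $\phi=\theta/2$, the lemma becomes
\[
2|\cos(k\phi)|+2|\cos((k+1)\phi)|\;\leq\; 4-\frac{4\phi^{2}}{\pi^{2}},
\]
and since $|\cos|$ has period $\pi$ we may replace $k\phi$ by its representative $r\in[-\pi/2,\pi/2]$ and, by symmetry in $\phi\mapsto-\phi$, assume $\phi\in[0,\pi/2]$.

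The second step is to maximise $|\cos r|+|\cos(r+\phi)|$ over $r\in[-\pi/2,\pi/2]$. On the subrange where $r+\phi\in[-\pi/2,\pi/2]$, sum-to-product yields $\cos r+\cos(r+\phi)=2\cos(r+\phi/2)\cos(\phi/2)\leq 2\cos(\phi/2)$, with equality at $r=-\phi/2$. On the complementary range where $r+\phi\in(\pi/2,\pi]$ the second cosine is negative, and the analogous identity gives $\cos r-\cos(r+\phi)=2\sin(r+\phi/2)\sin(\phi/2)\leq 2\sin(\phi/2)$. Because $\sin(\phi/2)\leq\cos(\phi/2)$ for $\phi\in[0,\pi/2]$, the global maximum equals $2\cos(\phi/2)$, so the lemma reduces (after setting $y=\phi/2\in[0,\pi/4]$) to the single-variable inequality
\[
\cos y\;\leq\; 1-\frac{4y^{2}}{\pi^{2}}\qquad\text{for }|y|\leq\pi/4.
\]

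The main obstacle is this final inequality: the standard Jordan-type bound $\cos y\leq 1-2y^{2}/\pi^{2}$ valid on $[-\pi/2,\pi/2]$ loses a factor of $2$ and would deliver only the weaker conclusion $4-\theta^{2}/(2\pi^{2})$. The proposal is an elementary calculus argument for $h(y):=1-4y^{2}/\pi^{2}-\cos y$: one notes $h(0)=h'(0)=0$ and $h''(0)=1-8/\pi^{2}>0$, and that on $[0,\pi/4]$ the derivative $h''(y)=\cos y-8/\pi^{2}$ has a single zero (near $y=\arccos(8/\pi^{2})$), so $h'(y)=\sin y-8y/\pi^{2}$ is unimodal on that interval. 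Together with the initial value $h'(0)=0$ and the explicit evaluation $h'(\pi/4)=\tfrac{\sqrt{2}}{2}-\tfrac{2}{\pi}>0$, this forces $h'\geq 0$ throughout $[0,\pi/4]$, so $h$ is nondecreasing and $h\geq 0$. The evenness of $\cos$ then extends the bound to $|y|\leq\pi/4$, completing the proof of the lemma.
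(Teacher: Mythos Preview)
Your argument is correct, and it follows a genuinely different route from the paper's. The paper does not optimise the sum over $k$ directly; instead it bounds each term separately via the pointwise inequality $|\cos \pi y|\leq 1-4\|y\|^2$ (obtained from the monotonicity of $x/\sin x$ on $[0,\pi/2]$), and then recovers the $\theta^2$ saving by combining the two bounds with the elementary inequality $a^2+b^2\geq\tfrac12(a+b)^2$ and the nearest-integer triangle inequality $\|x\|+\|x+y\|\geq\|y\|$. Your approach, by contrast, first locates the exact maximiser $r=-\phi/2$ of $|\cos r|+|\cos(r+\phi)|$ via sum-to-product, and only afterwards proves the single-variable bound $\cos y\leq 1-4y^2/\pi^2$ on $[0,\pi/4]$ by a short calculus argument. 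The two key one-variable inequalities are essentially the same (yours on the smaller range $[0,\pi/4]$), but the paper's proof of it through $x/\sin x$ is a little slicker than the derivative analysis, while your treatment of the $k$-dependence is more explicit and identifies the worst case rather than losing a constant through Cauchy--Schwarz and the $\|\cdot\|$-triangle inequality.
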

\begin{proof}
    We have 
    $$  |1+e(k\theta)| + |1+e((k+1)\theta)| = 2\left( \left| \cos \frac{k\theta}{2} \right| + \left| \cos \frac{(k+1)\theta}{2} \right| \right). $$
    To bound this further, consider the function
    $$g(x) = \frac{x}{\sin x},$$
    which is strictly increasing on the interval $[0,\pi/2]$. Therefore, for any $x \in [-\pi/2,\pi/2]$ we have
    $$ \frac{x^2}{4 \sin^2 \frac{x}{2}} = g\left( \frac{x}{2}\right)^2 \leq g\left( \frac{\pi}{4}\right)^2 = \frac{\pi^2}{8}. $$
    Take any $y \in \R$ and put $x = \pi \| y \| \in [-\pi/2,\pi/2]$, where $\|y\|$ denotes the distance from $y$ to the nearest integer. We obtain
    $$|\cos \pi y| = \cos \pi \|y\| = 1-2 \sin^2 \frac{\pi \|y\|}{2} \leq 1-4\|y\|^2,$$
    and therefore
    \begin{align*}
        \left| \cos \frac{k\theta}{2} \right| + \left| \cos \frac{(k+1)\theta}{2} \right| &\leq 2- 4 \left(\left\| \frac{k\theta}{2\pi}\right\|^2+\left\| \frac{(k+1)\theta}{2\pi}\right\|^2\right)  \\
        &\leq 2 - 2 \left( \left\| \frac{k\theta}{2\pi}\right\|+\left\|\frac{k\theta}{2\pi} + \frac{\theta}{2\pi}\right\|\right)^2 \\
        &\leq 2 - 2\left\| \frac{\theta}{2\pi} \right\|^2 = 2 - \frac{\theta^2}{2\pi},
    \end{align*}
where the last inequality is a consequence of the inequality $\|x\|+\|x+y\| \geq \|y\|$ valid for all $x,y \in \R$. The result follows.
\end{proof}

We can now prove Proposition \ref{prop:char_fun_bound}.

\begin{proof}[Proof of Proposition \ref{prop:char_fun_bound}]
  Similarly as in the proof of Proposition \ref{prop:normal_approx}, we define block matrices
$$ \Lambda_t = \begin{bmatrix}
    \Gamma_t \\ \Gamma_{t+1}
\end{bmatrix}, \qquad D_{2t} = \begin{bmatrix}
    A_{2t} & \mathbf{0} \\ B_{2t+1} & C_{2t+1}
\end{bmatrix}, 
\qquad D_{2t+1} = \begin{bmatrix}
    B_{2t+1} & C_{2t+1} \\ \mathbf{0} & A_{2t+2}
\end{bmatrix},  $$
so that Proposition \ref{prop:char_fun_rec} yields
$$  \Lambda_{2t} = D_{2t} \Lambda_t, \qquad  \Lambda_{2t+1} = D_{2t+1} \Lambda_t. $$
Hence, if $t = [\varepsilon_m \cdots \varepsilon_1 \varepsilon_0]_2$, then
\begin{equation} \label{eq:matrix_prod}
    \Lambda_t = D_{[\varepsilon_{\ell-1} \cdots \varepsilon_0]_2} D_{[\varepsilon_{\ell} \cdots \varepsilon_1]_2} \cdots D_{[\varepsilon_{m+\ell-1}\cdots\varepsilon_{m}]_2}\Lambda_0, 
\end{equation} 
where we put $\varepsilon_j = \tO$ for $j > m$. Roughly speaking, each digit in the binary expansion corresponds to precisely one factor in the above product, although to determine the exact form of that factor $\ell-1$ preceding digits have to be known as well.

Since $|\gamma_t(\theta)| \leq \|\Lambda_t(\theta)\|_\infty$, it is sufficient to prove for $\occ{\tO\tl}{t} \geq \ell+3$ the inequality
\begin{equation} \label{eq:norm_ineq2}
     \|\Lambda_t(\theta)\|_\infty \leq  \left(1- \frac{\theta^2}{2^{\ell+2} \pi^2}\right)^{\occ{\tO\tl}{t}/(\ell+3)}.
\end{equation}

We split the reasoning into two parts, depending on whether $\ell=2$ or $\ell > 2$. Starting with $\ell=2$, we leave out some of the details since the reasoning is very similar as in \cite[Proposition 3.10]{SobolewskiSpiegelhofer2023}. Consider all possible strings $\uptau_3 \uptau_2 \tO\tl$, where $\uptau_2, \uptau_3 \in \{\tO,\tl\}$. There are at least $\fl{\occ{\tO\tl}{t}/2} > \occ{\tO\tl}{t}/5$ non-overlapping occurrences of these strings in the binary expansion of $t$ (because $\occ{\tO\tl}{t} \geq 5$). The  subproduct in \eqref{eq:matrix_prod} corresponding to each such occurrence is of the form
\begin{equation} \label{eq:subproduct}
    D_{[\tO\tl]_2} D_{[\uptau_2 \tO]_2} D_{[\uptau_3 \uptau_2]_2} D_{[\uptau_4 \uptau_3]_2},
\end{equation}
where $\uptau_4 \in \{\tO,\tl\}$ is the digit lying directly to the left of the string. By direct computation, for all $w$ of length $\ell=2$ and every choice of $\uptau_2,\uptau_3,\uptau_4$ each row of \eqref{eq:subproduct} has an entry containing the sub-expression $(1+\e(\theta))/16$ (up to multiplication by a power of $\e(\pm\theta)$). By Lemma \ref{lem:norm_bound} applied to $k=0$ we can thus bound the row-sum norm by $1 - \theta^2/(16\pi^2)$.
    Using this once per each non-overlapping string $\uptau_3 \uptau_2 \tO\tl$, combined with submultiplicativity of $\|\cdot\|_{\infty}$ and the fact that $\|D_r(\theta)\|_{\infty} \leq 1$ for all $r$, we get \eqref{eq:norm_ineq2}.

In the general case $\ell \geq 3$ the idea is similar, however we need to rely on the properties of the matrices rather than direct computation. Let $m=\fl{(\ell+3)/2}$ and consider every $m$th occurrence of $\tO\tl$ in $(t)_2$, reading from left to right. Unlike in the rest of the paper, here we do not add a leading $\tO$ to the expansion. If such an occurrence is preceded by a $\tO$, then it lies in the ``middle'' of a string of the form $\tO^k\tO\tl u$, and otherwise in the ``middle'' of $\tl^k\tO\tl u$, where $u \in \{\tO,\tl\}^{\ell-1}$ and $k \geq 1$ is assumed to be maximal. Each of these strings contains at most $1+ (\ell-1)/2 \leq m-1$ occurrences of $\tO\tl$, hence they do not overlap. Since $\occ{\tO\tl}{t} \geq \ell+3$, the the number of non-overlapping strings of considered form is at least
$$  \fl{\frac{\occ{\tO\tl}{t}-1}{m}} \geq \frac{\occ{\tO\tl}{t}}{m}-1 \geq  \frac{2}{\ell+3} \occ{\tO\tl}{t} -1 \geq \frac{1}{\ell+3} \occ{\tO\tl}{t}.$$

We now show that each subproduct of \eqref{eq:matrix_prod} corresponding to one of these strings causes $\|\Lambda_t(\theta)\|_\infty$ to decrease (outside $\theta=0$).  Starting with the  case $\tO^k\tO\tl u$,
let $v \in \{\tO,\tl\}^{\ell-1}$ be the string lying directly to the left in the binary expansion (possibly with leading zeros) and write $v\tO^k\tO\tl u = \uptau_{n+\ell-1} \cdots \uptau_1\uptau_0$. It is possible that $v$ overlaps with the previous string, however this does not affect our reasoning. By the assumption that $k$ is maximal, $v$ must end with a $\tl$.  For $h \in \{0,1,\ldots,n\}$ we define $2^{\ell-1} \times 2^{\ell-1}$ matrices $E_h, F_h, G_h, J_h$ by writing in block form:
\begin{equation} \label{eq:matrix_prod_2}
\begin{bmatrix}
      E_{h} & F_{h} \\
      G_{h} & H_{h}
  \end{bmatrix}  = D_{[\uptau_{\ell-1} \cdots \uptau_0]_2} D_{[\uptau_{\ell} \cdots \uptau_1]_2} \cdots D_{[\uptau_{h+\ell-1}\cdots\uptau_{h}]_2}. 
  \end{equation}
We first examine these products evaluated at $\theta=0$ in order to gain some information about their nonzero entries. To simplify the notation, we let 
$$  \hat{D}_0 = D_{2t}(0) =\begin{bmatrix}
    A & \mathbf{0} \\ B & C
\end{bmatrix}, 
\qquad \hat{D}_1 = D_{2t+1}(0) = \begin{bmatrix}
    B & C \\ \mathbf{0} & A
\end{bmatrix}    $$
so that \eqref{eq:matrix_prod_2} evaluated at $\theta=0$ becomes
$$ \begin{bmatrix}
      \hat{E}_{h} & \hat{F}_{h} \\
      \hat{G}_{h} & \hat{H}_{h}
  \end{bmatrix} = \hat{D}_{\uptau_0} \hat{D}_{\uptau_1} \cdots \hat{D}_{\uptau_{h}}. $$
  To begin, consider the product of the factors corresponding to $u$, namely the case $h=\ell-2$.
  By a similar reasoning as in the proof of Proposition \ref{prop:v_close}, for each $j \in \{0,1,\ldots,2^{\ell-1}-1\}$ the $j$th row of precisely one of $\hat{E}_{\ell-2}, \hat{F}_{\ell-2}$  and precisely one of $\hat{G}_{\ell-2}, \hat{H}_{\ell-2}$ is nonzero. Moreover, the nonzero rows inside each block have all entries equal to $1/2^{\ell-1}$. Observe that for any $2^{\ell-1} \times 2^{\ell-1}$ matrix $M$ whose all columns are identical, we have $MA=M$ and $MB=MC = \frac{1}{2}M$. Since $\hat{E}_{\ell-2}, \hat{F}_{\ell-2}, \hat{G}_{\ell-2}, \hat{H}_{\ell-2}$ have this property, multiplying $\hat{D}_{\uptau_0} \hat{D}_{\uptau_1} \cdots \hat{D}_{\uptau_{\ell-2}}$ from the right by $\hat{D}_{\uptau_{\ell-1}}\hat{D}_{\uptau_{\ell}} = \hat{D}_1 \hat{D}_0$, we get
  \begin{equation} \label{eq:efgh_l}
    \begin{bmatrix}
      \hat{E}_{\ell} & \hat{F}_{\ell} \\
      \hat{G}_{\ell} & \hat{H}_{\ell}
  \end{bmatrix} =  
  \begin{bmatrix}
      \hat{E}_{\ell-2} & \hat{F}_{\ell-2} \\
      \hat{G}_{\ell-2} & \hat{H}_{\ell-2}
  \end{bmatrix}  \begin{bmatrix}
    BA + CB & C^2 \\ AB & AC
\end{bmatrix} = \frac{1}{4} \begin{bmatrix}
    3 \hat{E}_{\ell-2} + 2 \hat{F}_{\ell-2} & \hat{E}_{\ell-2} + 2\hat{F}_{\ell-2} \\
    3 \hat{G}_{\ell-2} + 2 \hat{H}_{\ell-2} & \hat{G}_{\ell-2} + 2\hat{H}_{\ell-2}
\end{bmatrix}. 
\end{equation}
This matrix has all entries at least $1/2^{\ell+1}$.

Moving on, for $\ell+1 \leq h  \leq n$ we have $\hat{D}_{\uptau_1} \cdots \hat{D}_{\uptau_{h}}(0) = \hat{D}_0$, which means that
\begin{equation} \label{eq:efgh_h} 
\begin{bmatrix}
      \hat{E}_{h} & \hat{F}_{h} \\
      \hat{G}_{h} & \hat{H}_{h}
  \end{bmatrix} = 
  \begin{bmatrix}
      \hat{E}_{h-1} & \hat{F}_{h-1} \\
      \hat{G}_{h-1} & \hat{H}_{h-1}
  \end{bmatrix}  \begin{bmatrix}
    A & \mathbf{0} \\ B & C
\end{bmatrix}  =
\frac{1}{2}\begin{bmatrix}
      2\hat{E}_{h-1}+\hat{F}_{h-1} & \hat{F}_{h-1} \\
      2\hat{G}_{h-1}+\hat{H}_{h-1} & \hat{H}_{h-1}
  \end{bmatrix}
 \end{equation}
  so by induction we get $\hat{E}_{h} \geq \hat{E}_{\ell}$ and $\hat{G}_{h} \geq \hat{G}_{\ell}$ entry-wise. In particular, the key property is that all entries of $\hat{E}_{h}$ and $\hat{G}_{h}$ are still at least $1/2^{\ell+1}$.

We are ready to bound the row-sum norm of the product \eqref{eq:matrix_prod_2} for $h=n$, which corresponds to the full string $\tO^k\tO\tl u$. Put $s= [v\tO]_2$ and observe that  $s \not \equiv 0 \pmod{2^{\ell-1}}$, because $v$ ends with a $\tl$ and $\ell \geq 3$. Multiplying $D_{[\uptau_{\ell-1} \cdots \uptau_0]_2} D_{[\uptau_{\ell} \cdots \uptau_1]_2} \cdots D_{[\uptau_{n+\ell-2}\cdots\uptau_{n-1}]_2}$ from the right by $D_{s}$ yields
\begin{equation}
    \label{eq:matrix_prod_3}
     \begin{bmatrix}
    E_{n} & F_{n} \\
    G_{n} & H_{n}
\end{bmatrix} = \begin{bmatrix}
    E_{n-1} & F_{n-1} \\
    G_{n-1} & H_{n-1}
\end{bmatrix}  \begin{bmatrix}
    A_s & \mathbf{0} \\
    B_{s+1} & C_{s+1}
\end{bmatrix}  = \begin{bmatrix}
    E_{n-1}A_s + F_{n-1} B_{s+1} & F_{n-1} C_{s+1}\\
    G_{n-1}A_s + H_{n-1} B_{s+1} & H_{n-1} C_{s+1}
\end{bmatrix}.  
\end{equation} 
We now focus on bounding the row-sum norm of the first ``row'' of blocks, where 
the term responsible for the decrease in the norm will be $E_{n-1}A_s$. Assume for the sake of simplicity that $[w]_2 < 2^{\ell-1}$ and $[w]_2$ is even, i.e.\ $w$ begins and ends with a $0$. Then we have
$$ A_s(\theta) = \frac{1}{2} \begin{bmatrix}
    & 0 & & & 0 & \\
    & \vdots & & & \vdots & \\
    & 0 & & & 0 & \\
\cdots    & \e(-\theta) & \cdots & & 1 &  \cdots \\
\cdots    & 1 & \cdots & & 1 & \cdots \\
    & 0 & & & 0 & \\
    & \vdots & & & \vdots & \\
    & 0 & & & 0 & \\
\end{bmatrix},  $$
where we focus on rows $[w]_2, [w]_2+1$ and columns $[w]_2/2,[w]_2/2+2^{\ell-2}$.
Switching the first or last digit of $w$ to $\tl$ (or both) only swaps the position of $\e(-\theta)$ and one of the other listed entries equal to $1$, which leads to a similar reasoning.

Choose a row number, say $j$, and let $p(\theta)$, $q(\theta)$ be the entries $E_{n-1}(\theta)$ lying in that row and columns $[w]_2,[w]_2+1$, respectively. Note that $2^n p(\theta)$ and $2^n q(\theta)$ are polynomials in $\e(\pm \theta)$ with integer coefficients, so we can write
$$  p(\theta) = \frac{1}{2^{n}} \sum_{a=1}^{2^n p(0)} \e( x_a \theta), \qquad q(\theta) = \frac{1}{2^{n}} \sum_{a=1}^{2^n q(0)} \e(y_a \theta), $$
where $x_a,y_a$ are some integers which may repeat for different $a$. Also, $p(0) = q(0)$, since these values lie in the same row of $\hat{E}_{n-1}$. 

Now, the entries in row $j$, columns $[w]_2/2$ and $[w]_2/2+2^{\ell-2}$ in $E_{n-1}(\theta)A_s(\theta)$ are 
$(\e(-\theta)p(\theta) + q(\theta))/2$ and $(p(\theta) + q(\theta))/2$, respectively.  Hence, their joint contribution  to the row-sum norm of $j$th row of $E_{n-1}(\theta)A_s(\theta)$ is
\begin{align*}
    \frac{1}{2}\left(|\e(-\theta)p(\theta) + q(\theta)| + |p(\theta) + q(\theta)|\right) &\leq  \frac{1}{2^{n+1}} \sum_{a=1}^{2^n p(0)} \left(|\e((x_a-1)\theta)+\e(y_a\theta)| + |\e(x_a\theta)+\e(y_a\theta)|  \right) \\    &\leq \frac{p(0)}{2}\left(4 - \frac{\theta^2}{\pi^2} \right), 
\end{align*} 
where we have applied Lemma \ref{lem:norm_bound} to each summand. 
The sum of all entries in each row of \eqref{eq:matrix_prod_3} evaluated at $\theta=0$ equals $1$. By the earlier part of the proof we have $p(0) \geq 1/2^{\ell+1}$ so the row-sum norm of the $j$th row of $\begin{bmatrix}
     E_{n-1}A_s + F_{n-1} B_{s+1} &  F_{n-1} C_{s+1} 
\end{bmatrix}$
 is
$$  1 - 2p(0) + \frac{p(0)}{2}\left(4 - \frac{\theta^2}{\pi^2} \right)  \leq 1 - \frac{\theta^2}{2^{\ell+2} \pi^2}. $$
The same reasoning works also for the bottom ``row'' of blocks in \eqref{eq:matrix_prod_3}, and therefore we get  
\begin{equation} \label{eq:matrix_prod_bound}
   \| D_{[\uptau_{\ell-1} \cdots \uptau_0]_2} D_{[\uptau_{\ell} \cdots \uptau_1]_2} \cdots D_{[\uptau_{n+\ell-1}\cdots\uptau_{n}]_2}\|_{\infty} \leq 1 - \frac{\theta^2}{2^{\ell+2} \pi^2}.
\end{equation}

If we consider an occurrence of the string $\tl^k\tO \tl  u$, the argument is very similar so we omit most of the details. Again, we let $v\tl^k\tO \tl u = \uptau_{n+\ell-1} \cdots \uptau_1\uptau_0$, where $v \in \{\tO,\tl\}^{\ell-1}$ ends with a $0$. Until equality \eqref{eq:efgh_l} the calculations are exactly the same. Then, instead of \eqref{eq:efgh_h} we get
$$ \begin{bmatrix}
      \hat{E}_{h} & \hat{F}_{h} \\
      \hat{G}_{h} & \hat{H}_{h}
  \end{bmatrix}  =
\frac{1}{2}\begin{bmatrix}
      \hat{E}_{h-1} & \hat{E}_{h-1} +2\hat{F}_{h-1} \\
      \hat{G}_{h-1} &  \hat{G}_{h-1}+2\hat{H}_{h-1}
  \end{bmatrix},   $$
and therefore all entries of $\hat{F}_{h}$ and $\hat{H}_{h}$ are at least $1/2^{\ell+1}$ for $h \geq \ell$. We again put $s = [v\tl]_2$ and have $s+1 \not \equiv 0 \pmod{2^{\ell-1}}$. Further, we have
$$
     \begin{bmatrix}
    E_{n} & F_{n} \\
    G_{n} & H_{n}
\end{bmatrix} = \begin{bmatrix}
    E_{n-1} & F_{n-1} \\
    G_{n-1} & H_{n-1}
\end{bmatrix}  \begin{bmatrix}
    B_s & C_s \\
   \mathbf{0} & A_{s+1}
\end{bmatrix}  = \begin{bmatrix}
    E_{n-1}B_s & E_{n-1}C_s + F_{n-1} A_{s+1} \\
    G_{n-1}B_s & G_{n-1}C_s + H_{n-1} A_{s+1}
\end{bmatrix}.  
$$
This time, the crucial part is to bound $\| F_{n-1}(\theta) A_{s+1}(\theta)  \|_{\infty}$ and $\| H_{n-1}(\theta) A_{s+1}(\theta)  \|_{\infty}$, which is done in the same way as before. We thus again reach the bound 
\eqref{eq:matrix_prod_bound}. 

Inequality \eqref{eq:norm_ineq2} follows and the proof is finished.
\end{proof}



\subsection*{Acknowledgements}
The research was supported by the grant of the National Science Centre (NCN), Poland, no.\ UMO-2020/37/N/ST1/02655 and the grant of the Austrian Science Fund (FWF), no.\ P36137-N.

\bibliographystyle{amsplain}
\bibliography{references}

\providecommand{\bysame}{\leavevmode\hbox to3em{\hrulefill}\thinspace}
\providecommand{\MR}{\relax\ifhmode\unskip\space\fi MR }
\providecommand{\MRhref}[2]{%
  \href{http://www.ams.org/mathscinet-getitem?mr=#1}{#2}
}
\providecommand{\href}[2]{#2}
\begin{thebibliography}{10}

\bibitem{AlloucheShallit2003}
Jean-Paul Allouche and Jeffrey Shallit, \emph{The ring of {$k$}-regular
  sequences. {II}}, Theoret. Comput. Sci. \textbf{307} (2003), no.~1, 3--29.

\bibitem{Besineau1972}
Jean B\'esineau, \emph{Ind\'ependance statistique d'ensembles li\'es \`a la
  fonction ``somme des chiffres''}, Acta Arith. \textbf{20} (1972), 401--416.

\bibitem{DrmotaKauersSpiegelhofer2016}
Michael Drmota, Manuel Kauers, and Lukas Spiegelhofer, \emph{On a {C}onjecture
  of {C}usick {C}oncerning the {S}um of {D}igits of {$n$} and {$n+t$}}, SIAM J.
  Discrete Math. \textbf{30} (2016), no.~2, 621--649.

\bibitem{EmmeHubert2019}
Jordan Emme and Pascal Hubert, \emph{Central limit theorem for probability
  measures defined by sum-of-digits function in base 2}, Annali della {S}cuola
  {N}ormale {S}uperiore di {P}isa \textbf{XIX} (2019), no.~2, 757--780.

\bibitem{EmmePrikhodko2017}
Jordan Emme and Alexander Prikhod'ko, \emph{On the {A}symptotic {B}ehavior of
  {D}ensity of {S}ets {D}efined by {S}um-of-digits {F}unction in {B}ase 2},
  Integers \textbf{17} (2017), A58.

\bibitem{HostenJanvresseRue2024}
Yohan Hosten, \'{E}lise Janvresse, and Thierry de~la Rue, \emph{A central limit
  theorem for the variation of the sum of digits}, Ann. Inst. Henri
  Poincar\'{e} Probab. Stat. \textbf{60} (2024), no.~2, 1125--1149.

\bibitem{Legendre1830}
Adrien-Marie Legendre, \emph{Th\'{e}orie des nombres}, Firmin Didot fr\`{e}res,
  Paris, 1830.

\bibitem{SobolewskiSpiegelhofer2023}
Bartosz Sobolewski and Lukas Spiegelhofer, \emph{Block occurrences in the
  binary expansion}, 2023, Preprint, https://arxiv.org/abs/2309.00142.

\bibitem{Spiegelhofer2019}
Lukas Spiegelhofer, \emph{Approaching {C}usick's conjecture on the
  sum-of-digits function}, Integers \textbf{19} (2019), Paper No. A53.

\bibitem{Spiegelhofer2022}
\bysame, \emph{A lower bound for {C}usick's conjecture on the digits of {$n +
  t$}}, Math. Proc. Cambridge Philos. Soc. \textbf{172} (2022), no.~1,
  139--161.

\bibitem{SpiegelhoferWallner2019}
Lukas Spiegelhofer and Michael Wallner, \emph{The {T}u--{D}eng conjecture holds
  almost surely}, Electron. J. Combin. \textbf{26} (2019), no.~1, Paper 1.28.

\bibitem{SpiegelhoferWallner2023}
\bysame, \emph{The binary digits of $n+t$}, Ann. Sc. Norm. Super. Pisa, Cl.
  Sci. (5) \textbf{XXIV} (2023), no.~1, 1--31.

\bibitem{TuDeng2011}
Ziran Tu and Yingpu Deng, \emph{A conjecture about binary strings and its
  applications on constructing {B}oolean functions with optimal algebraic
  immunity}, Des. Codes Cryptogr. \textbf{60} (2011), no.~1, 1--14.

\end{thebibliography}

\end{document}